\numberwithin{equation}{section}
\newcommand\s{\sigma}
\renewcommand{\L}{\Lambda}
\newcommand{\hal}{\hat{\l}}
\newcommand{\Hs}{\hat{\s}}
\newcommand{\tl}{\tilde{\l}}
\renewcommand\d{\partial}
\renewcommand\a{\alpha}
\renewcommand\b{\beta}
\def\g{\gamma}
\def\l{\lambda}
\def\eps{\varepsilon }
\def\e{\varepsilon}
\renewcommand\d{\partial}
\renewcommand\a{\alpha}
\renewcommand\b{\beta}
\newcommand\R{\mathbb R}
\def\g{\gamma}
\def\eps{\varepsilon}
\def\e{\varepsilon}
\def\l{\lambda}
\newcommand\br{\begin{remark}}
	\newcommand\er{\end{remark}}
\newcommand\bp{\begin{pmatrix}}
	\newcommand\ep{\end{pmatrix}}
\newcommand{\be}{\begin{equation}}
\newcommand{\ee}{\end{equation}}
\newcommand\ba{\begin{equation}\begin{aligned}}
\newcommand\ea{\end{aligned}\end{equation}}
\newcommand{\bap}{\begin{app}}
	\newcommand{\eap}{\end{app}}
\newcommand{\begs}{\begin{exams}}
	\newcommand{\eegs}{\end{exams}}
\newcommand{\beg}{\begin{example}}
	\newcommand{\eeg}{\end{exaplem}}
\newcommand{\bpr}{\begin{proposition}}
	\newcommand{\epr}{\end{proposition}}
\newcommand{\bt}{\begin{theorem}}
	\newcommand{\et}{\end{theorem}}
\newcommand{\bc}{\begin{corollary}}
	\newcommand{\ec}{\end{corollary}}
\newcommand{\bl}{\begin{lemma}}
	\newcommand{\el}{\end{lemma}}
\newcommand{\bd}{\begin{definition}}
	\newcommand{\ed}{\end{definition}}
\newcommand{\brs}{\begin{remarks}}
	\newcommand{\ers}{\end{remarks}}
\newtheorem{hypothesis}{Hypothesis}
\newtheorem{claim}{Claim}
\newcommand{\RR}{{\mathbb R}}
\newcommand{\NN}{{\mathbb N}}
\newcommand{\ZZ}{{\mathbb Z}}
\newcommand{\CC}{{\mathbb C}}
\newcommand{\sC}{\mathscr{C}}
\newcommand{\sN}{\mathscr{N}}
\newcommand{\sQ}{\mathscr{Q}}
\newcommand{\const}{\text{\rm constant}}
\newcommand{\sgn}{\text{\rm sgn}}
\newtheorem{theorem}{Theorem}[section]
\newtheorem{proposition}[theorem]{Proposition}
\newtheorem{corollary}[theorem]{Corollary}
\newtheorem{lemma}[theorem]{Lemma}
\theoremstyle{remark}
\newtheorem{remark}[theorem]{Remark}
\newtheorem{remarks}[theorem]{Remarks}
\theoremstyle{definition}
\newtheorem{definition}[theorem]{Definition}
\newtheorem{example}[theorem]{Example}
\newtheorem{obs}[theorem]{Observation}
\newcommand\cA{{\mathcal { A}}}
\newcommand\cV{{\mathcal  V}}
\newcommand\cC{{\mathcal  C}}
\newcommand\cK{{\mathcal  K}}
\newcommand\cL{{\mathcal  L}}
\newcommand\cN{{\mathcal  N}}
\newcommand\cE{{\mathcal  E}}
\newcommand\cQ{{\mathcal Q}}
\newcommand\cO{{\mathcal O}}
\newcommand\cS{{\mathcal S}}
\newcommand{\spec}{\operatorname{spec}}
\newcommand{\supp}{\text{\rm{supp}}}
\newcommand{\beq}{\begin{equation}}
\newcommand{\eeq}{\end{equation}}
\newcommand{\Hx}{\hat{x}}
\newcommand{\Ht}{\hat{t}}
\title{Diffusive stability of convective Turing patterns}
\author{Aric Wheeler}
\address{Indiana University, Bloomington, IN 47405}
\email{awheele@iu.edu }
\thanks{Research of A.W. was partially supported
under NSF grant no. DMS-1700279.}
\author{Kevin Zumbrun}
\address{Indiana University, Bloomington, IN 47405}
\email{kzumbrun@indiana.edu} 
\thanks{Research of K.Z. was partially supported
under NSF grants no. DMS-0300487 and DMS-0801745.}
\begin{document}
\begin{abstract}
Following the approach of \cite{E1,M1,M2,S1,S2,SZJV} for reaction diffusion systems,
we justify rigorously the Eckhaus stability criterion for stability of convective Turing patterns,
	as derived formally by complex Ginzburg-Landau approximation \cite{SS,NW,WZ}.
Notably, our analysis includes also higher-order, nonlocal, and even certain semilinear hyperbolic systems.
\end{abstract}

\maketitle
\section{Introduction}
In this paper, extending work of \cite{M1,M2,S1,S2,SZJV,WZ},
we validate by rigorous Lyapunov-Schmidt reduction 
the well-known formal Eckhaus stability criterion for general, convective, Turing patterns,
obtained by complex Ginzburg-Landau approximation \cite{E1,SS,NW,M3},
showing that Eckhaus stability is equivalent to the diffusive stability condition of Schneider,
a condition that is necessary and sufficient for linearized and nonlinear stability \cite{S1,S2,JZ,JNRZ1,SSSU}.

Following \cite{WZ,M3}, consider a family of perturbation equations in standard form 
\be\label{std}
u_t=L(\mu)u+\cN(u,\mu),
\ee
where $L(\mu)=\sum_{j=0}^m \cL_j(\mu)\d_x^j$ is a constant-coefficient differential operator
and $\cN$ is a general nonlinear functional of quadratic order in $u$ and $x$-derivatives,
under the following generalized {\it Turing assumptions} on the spectra of $L$ near the bifurcation point $\mu=0$,
or, equivalently, on the eigenvalues $\tilde \lambda_j(k,\mu)$ of the associated Fourier symbol
$S(k,\mu)=\sum_{j=0}^{m}\cL_j(\mu)(ik)^j$.

\begin{hypothesis}\label{hyp:Turing}
The symbol $S(k,\mu)$ and its eigenvalues $\{\tl(k,\mu),\tl_2(k,\mu),...,\tl_n(k,\mu) \}$ satisfy:\\

\noindent
	(H1) For $\mu<0$ and all $k\in\RR$, $\sigma(S(k,\mu))\subset\{z\in\CC:\Re z<0 \}$.\\
	(H2) For $\mu=0$ there is a unique $k_*>0$ such that $\Re\tl(k_*,0)=0$ and for $2\leq j\leq n$ $\Re\tl_j(k_*,0)<0$.\\
	(H3) For $\mu=0$ and all $k\not=\pm k_*$, we have that $\Re\tl(k,0)<0$ and for $2\leq j\leq n$ $\Re\tl_j(k,0)<0$.\\
	(H4) $\Re\d_\mu\tl(k_*,0)>0$, $\Re\d_k\tl(k_*,0)=0$ and $\Re\d_k^2\tl(k_*,0)<0$.
\end{hypothesis}

Under Hypotheses \ref{hyp:Turing}, fixing a wave number $\tilde k$ near $k_*$, there is a transcritical $SO(2)$ 
bifurcation from the constant solution to spatially-periodic traveling waves of period $\tilde k$ as $\mu$
increases near zero \cite{CK,CaK,M}.
Considerably more information, incorporating the continuum of $k$-dependent solutions,
is contained in the ``weakly unstable'' or ``weakly nonlinear'' approximation of Eckhaus \cite{E1}.

Let $r$ denote the eigenvalue of $S(k_*,0)$ associated with the critical eigenvalue $\tl(k_*,0)$,
so that (by complex conjugate symmetry, noting that $L$ is real-valued), $\tl(-k_*,0)=\overline{ \tl(k_*,0)}$, with
associated eigenvector $\bar r$.
Then, $u(x,t)=e^{i(k_*x + \Im \tl(k_*,0)t)}r+ c.c.$
is an exact nondecaying solution of the linearized equations 
$u_t=L(0)u$ at the bifurcation point $\mu=0$, where, here and elsewhere, $c.c.$ denotes complex conjugate.
Then, Eckhaus' ``weakly nonlinear'' expansion consists of formal asymptotic solutions of \eqref{std} of form
\ba\label{form}
U^{\e}(x,t)&=\frac{1}{2}\e A(\Hx,\Ht)e^{i\xi}r + \cO(\e^2) +c.c.,
\quad \xi=k_* \Big(x + \frac{\Im \tl(k_*,0)}{k_*}t \Big),\\
\mu&=\eps^2, \quad \Hx=\e(x+ \Im\d_k\tl(k_*,0)t), \quad  \Ht=\e^2t,
\ea
with amplitude $A\in \CC$ modulating the neutral linear solutions $e^{i(k_*x + \Im \tl(k_*,0)t)}r + c.c.$ at $\mu=0$.
Here, speeds $-\frac{\Im \tl(k_*,0)}{k_*}$ and $-\Im\d_k\tl(k_*,0)$ associated with moving frames $\xi$ and $\Hx$ 
may be recognized as phase and group velocities, respectively, of these underlying neutral linear solutions.

Substituting \eqref{form} into \eqref{std} yields \cite{M3,WZ} as a compatibility condition at $\cO(\eps^3)$
an {\it amplitude equation} consisting of the complex Ginzburg-Landau equation (cGL):
\begin{equation}\label{eq:cGL}
	A_{\Ht}=-\frac{1}{2}\d_k^2\tl(k_*,0)A_{\Hx\Hx}+\d_\mu\tl(k_*,0)A+\gamma|A|^2A,
\end{equation}
where the Landau constant $\gamma\in \CC$ is determined by the form of $\cN$
and spectral structure of $S(k_*,0)$.

The formal Ginzburg-Landau expansion \eqref{form}-\eqref{eq:cGL}, approximating behavior in neutral linear modes,
is expected to serve as an attractor for general small-amplitude solutions of \eqref{std}, with all
other linear modes strictly exponentially decaying.  For results on finite ($\cO(\eps^{-2})$) time
approximation of solutions of \eqref{std} by solutions of (cGL) see \cite{E2,vH,M3} and references therein.

\subsection{Existence}\label{s:existence}
Under the supercriticality condition $\Re \gamma \Re \d_\mu\tl(k_*,0)<0$, 
there exist periodic solutions 
\be\label{GLper}
A=e^{i(\kappa \Hx+ \omega \Ht)} \alpha,  \quad \alpha\equiv \const,
\quad
i\omega= -\frac12 \d_k^2\tl(k_*,0) \kappa^2 +\d_\mu\tl(k_*,0)+\gamma|\alpha|^2,
\ee
of (cGL) corresponding through \eqref{form}--\eqref{eq:cGL} to expected bifurcating traveling-wave solutions
\ba \label{expected}
U^{\e}(x,t)&=\frac{1}{2}\e \alpha  e^{i(kx + \Omega t)}r + \cO(\e^2) +c.c.
\quad
k=k_*+\eps \kappa, \quad
\Omega= \Im \tl(k_*,0) + \eps\kappa \partial_k\tl(k_*,0) + \eps^2 \omega ,
\ea
for wave numbers in the range
\be\label{krange}
\kappa^2 <\kappa_E^2:= 2 \Re \d_\mu\tl(k_*,0)/ \d_k^2\Re \tl(k_*,0),
\ee
where
\ba\label{alphaomega}
|\alpha|&= \sqrt{ \Re \gamma^{-1}\Big( \frac12 \d_k^2\Re \tl(k_*,0) \kappa^2 -\Re \d_\mu\tl(k_*,0) \Big)},\\
\omega&= -\frac12 \Im \d_k^2\tl(k_*,0) \kappa^2 +\Im \d_\mu\tl(k_*,0)+\Im \gamma|\alpha|^2.
\ea

The following result established by Lyapunov-Schmidt reduction in \cite{WZ} shows that there indeed 
exist exact traveling-waves solutions of \eqref{std} near the predicted solutions \eqref{expected}, 
bifurcating from the constant solution $u\equiv 0$ as $\mu$ increases near zero.

\begin{proposition}[\cite{WZ}]\label{mainLS}
Under Turing Hypotheses \ref{hyp:Turing},
for quasilinear nonlinearity $\cN$ and $\mu=\eps^2$, for any $\nu_0>0$ there exists 
$\eps_0$ such that for $\eps\in[0,\eps_0)$ 
and $\kappa^2 \leq (1-\nu_0) 2 \Re \d_\mu\tl(k_*,0)/ \d_k^2\Re \tl(k_*,0)$ there exists
	a unique (up to translation, i.e., up to choice of $\alpha$) 
	small spatially periodic traveling-wave solution $\bar U^\eps (kx+\bar \Omega t)\not \equiv 0$ 
of \eqref{std}, $\bar U$ $2\pi$-periodic, with $k=k_*+ \eps \kappa$, satisfying
\ba\label{soln}
	\bar U^{\e}(z)&= \Big(\frac{1}{2}\e \alpha  e^{iz}r + c.c.\Big) +\cO(\e^2),\\
	\bar \Omega &= \Big( \Im \tl(k_*,0) + \eps\kappa \partial_k\tl(k_*,0) + \eps^2 \omega \Big) + O(\eps^3),
\ea
where $\alpha\in \CC$ and $\omega\in \RR$ satisfy \eqref{alphaomega},
while for $\eps\in[0,\eps_0)$ and
$\kappa^2 \geq (1+\nu_0) 2 \Re \d_\mu\tl(k_*,0)/ \d_k^2\Re \tl(k_*,0)$ there exist no such small nontrivial solutions.
In the ($O(2)$-symmetric) generalized reaction diffusion case that $L$ and $N$ depend only 
on even derivatives or even powers of odd derivatives of $u$, $\bar \Omega\equiv 0$ and $\bar U^\eps$ is even for $\alpha\in \RR$.
\end{proposition}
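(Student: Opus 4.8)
The plan is to recast the existence problem as a bifurcation problem for the traveling-wave profile equation and carry out a Lyapunov--Schmidt reduction, tracking enough orders in $\eps$ to recover the algebraic content of (cGL). First I would substitute $u(x,t)=\bar U(z)$, $z=kx+\bar\Omega t$, with $\bar U$ of period $2\pi$ and $k=k_*+\eps\kappa$, into \eqref{std}, obtaining the profile equation
\be\label{pf:profile}
\bar\Omega\,\bar U' = L_k(\eps^2)\bar U+\cN_k(\bar U,\eps^2)=:\cF(\bar U,\bar\Omega;\eps,\kappa),
\ee
where $L_k$, $\cN_k$ denote $L$, $\cN$ with $\d_x$ replaced by $k\d_z$, posed on $H^s(\TT)$ with $s$ large relative to $m$. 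The linearization of $\cF$ about the trivial branch $\bar U\equiv0$ at $\eps=0$, $k=k_*$, $\bar\Omega_0:=\Im\tl(k_*,0)$ acts on the $n$-th Fourier mode as $S(nk_*,0)-in\bar\Omega_0$; by (H1)--(H3) this matrix is invertible for $|n|\neq1$, while for $n=\pm1$ it has one-dimensional kernel spanned by $r$, resp.\ $\bar r$, using $\tl(k_*,0)=i\bar\Omega_0$ and $\tl(-k_*,0)=\overline{\tl(k_*,0)}$. Hence $\ker=\Span\{e^{iz}r,\,e^{-iz}\bar r\}$ is two-dimensional and the linearization of $\cF$ is Fredholm of index zero, boundedly invertible on the $L^2$-orthogonal complement $\cR$ of $\ker$.

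Next I would decompose $\bar U=v+w$ with $v\in\ker$ and $w\in\cR$, let $\Pi$ be the spectral projection onto $\ker$, and use the translation symmetry $z\mapsto z+\phi$ --- which acts on the kernel coordinate $a\in\CC$ in $v=\tfrac12(ae^{iz}r+\text{c.c.})$ by $a\mapsto e^{i\phi}a$ --- to normalize $a=\eps\alpha$ with $\alpha>0$ real, leaving $(\alpha,\bar\Omega)$ as unknowns. Writing $w=\eps^2\tilde w$ and using that $\cN$ is quadratic-or-higher in $(u,\d_x u,\dots)$, the range equation $(\Id-\Pi)\cF=0$ has linearization in $\tilde w$ at $\eps=0$ equal to the invertible restriction of the above operator to $\cR$, so the implicit function theorem gives a smooth $\tilde w=\tilde w(\alpha,\bar\Omega;\eps,\kappa)$, hence $w=\Oh(\eps^2)$ --- the $\Oh(\e^2)$ remainder in \eqref{soln}.

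Substituting $w$ back into the kernel equation $\Pi\cF=0$ and dividing by $\eps$ yields, by $SO(2)$-equivariance, two real reduced equations for $(\alpha,\bar\Omega)$. Expanding in $\eps$ and using that the phase and group velocities of $e^{iz}r+\text{c.c.}$ are $-\bar\Omega_0/k_*$ and $-\Im\d_k\tl(k_*,0)$, the $\Oh(1)$ and $\Oh(\eps)$ terms cancel once $\bar\Omega$ has the form \eqref{soln} (its $\Oh(\eps)$-correction fixed by the group velocity), and the $\Oh(\eps^2)$ balance reduces exactly to the scalar cGL relation in \eqref{GLper}: its real part fixes $|\alpha|$ as in \eqref{alphaomega} --- real and bounded away from zero precisely when $\kappa^2\le(1-\nu_0)\kappa_E^2$, since $\Re\gamma\neq0$ by supercriticality --- and its imaginary part fixes $\omega$. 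The reduced $\alpha$-equation is then nondegenerate in $\alpha$ (again because $\Re\gamma\neq0$) and the $\bar\Omega$-equation nondegenerate in $\bar\Omega$ (since $\d_z$ is invertible on $\ker$), so a last implicit function theorem step solves for $(\alpha,\bar\Omega)=(\alpha(\eps,\kappa),\bar\Omega(\eps,\kappa))$ for $\eps\in[0,\eps_0)$, uniformly on $\kappa^2\le(1-\nu_0)\kappa_E^2$, establishing \eqref{soln}; uniqueness up to translation is inherited from the uniqueness at each implicit-function step. In the $O(2)$-symmetric case $\cF$ additionally commutes with the reflection $z\mapsto-z$, so I would rerun the whole reduction on the subspace of even functions with $\alpha\in\RR$; there the $\bar\Omega$-equation is solved identically by $\bar\Omega\equiv0$, and the resulting $\bar U^\eps$ is even.

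The hard part will be the quasilinearity of $\cN$: it makes $\cF$ lose derivatives, so the implicit function theorem cannot be applied in a single fixed Sobolev space. I would handle this by exploiting that \eqref{pf:profile} is an ODE --- when solving the range equation, inverting the \emph{full} principal part of $D_w\cF$, whose leading coefficient is $\bar U$-dependent but, for $\bar U$ small, a small perturbation of the nondegenerate constant-coefficient model $\cL_m(0)k_*^m\d_z^m$, rather than the model itself; equivalently, reducing to the finite-dimensional Poincar\'e map and bootstrapping regularity. The nonlocal and semilinear hyperbolic variants treated elsewhere in the paper demand, a fortiori, a careful choice of the function-space scale so that the top-order terms enter as a bounded perturbation. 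A secondary technical point is keeping the $\eps$-expansions and the nondegeneracy constants of the reduced equations uniform as $\kappa^2\uparrow\kappa_E^2$, which is exactly why the statement carries the $\nu_0$-cushion $\kappa^2\le(1-\nu_0)\kappa_E^2$.
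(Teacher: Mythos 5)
Your proposal follows essentially the same route as the proof in \cite{WZ} on which the paper relies: a Lyapunov--Schmidt reduction about the two-dimensional kernel $\Span\{e^{iz}r,\,e^{-iz}\bar r\}$ with the phase normalized by $SO(2)$-equivariance, the range equation solved by the implicit function theorem (giving the $\cO(\eps^2)$ remainder), and the kernel equation reproducing the reduced equation \eqref{eq:LSreduced}, whose real part fixes $\alpha$ through $\Re\gamma$ and whose imaginary part fixes the speed correction $\delta=d-d_*$ (equivalently your $\bar\Omega$). The one point worth making explicit is that the vanishing of the real part of the $\cO(\eps)$ term in the kernel equation is not arranged by the speed adjustment, which only shifts the imaginary part, but is precisely hypothesis (H4), $\Re\partial_k\tl(k_*,0)=0$.
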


The $O(2)$-symmetric case is the classic stationary Turing bifurction of \cite{T}.
The $SO(2)$-symmetric case, for which in general $\Omega\neq 0$,
represents the ``convective'' Turing bifurcation of the title \cite{WZ}.

\subsection{Stability}\label{s:stability}
As described in Section \ref{sec:cglstability} 
the linearized stability problem for periodic solutions \eqref{GLper} of (cGL) may be explicitly solved 
in terms of exponential functions, yielding the {\it Eckhaus stability criterion} \cite{AK,E1,M3}:
\be\label{Estab}
\kappa^2<\kappa_S^2:= 2 \frac{ \Im(\partial_k \tl(k_*,0) )\Im \gamma \Re (\partial_\mu \tl(k_*,0) ) \Re \gamma 
+\Re(\partial_k \tl(k_*,0)) \Re (\partial_\mu \tl(k_*,0)) \Re \gamma^2}
{\Re \partial_k \tl(k_*,0)(2\Im \gamma^2\Re(\partial_k \tl(k_*,0)) +\Im(\partial_k \tl(k_*,0)) 
\Im(\gamma) \Re(\gamma)+ 3\Re(\partial_k \tl(k_*,0)) \Re( \gamma^2))},
\ee
where $\kappa_S^2<\kappa_E^2$; see \eqref{eq:genericCGL}, \eqref{eq:StableBand}, \eqref{ineq}), and Remark \ref{normrmk}.
As an exact stability condition for the approximating (cGL) solution \eqref{GLper} within the formal
attractor given by the set of all (cGL) solutions, this serves as a {\it formal stability criterion} for
bifurcating waves $U^\eps$ as solutions of \eqref{std}.

An exact stability criterion for spatially periodic waves $U^\eps$ as solutions of \eqref{std}, meanwhile, 
is given by the {\it diffusive stability condition} of Schneider \cite{S1,S2}, which we now describe.
Let $\cL^{\eps,\kappa}=\cS(\partial_x,x;\eps,\kappa)$ 
denote the linearized operator about solution $U^{\eps,\kappa}$, expressed in
a co-moving coordinate frame for which $\bar U^{\eps,\kappa}$ is stationary.
The associated periodic-coefficient Floquet operator
\be\label{stdBloch}
\cL^{\eps,\kappa}_\sigma:= \cS(\partial_x + i\sigma, x;\eps,\kappa),
\ee
has the property \cite{G} that the spectrum of $\cL^{\eps,\kappa}$, considered as an operator on the whole line,
is given by the union over $\sigma \in [-\pi/X^\eps, \pi/X^\eps)$ of spectra of $\cL^{\eps,\kappa}_\sigma$,
considered as operators on $[0,X^{\eps,\kappa})$ with periodic boundary conditions, where 
$X^{\eps,\kappa}= 2\pi/k:= 2\pi/(k_*+\kappa \eps)$ is the period of $U^{\eps,\kappa}$.

An evident necessary condition for linearized stability of $U^{\eps,\kappa}$ is thus
\be\label{specstab}
\hbox{\rm $\Re \spec( \cL^{\eps,\kappa}_\sigma) \leq 0$ for all $\sigma \in [-\pi/X^\eps, \pi/X^\eps)$.}
\ee
The sufficient condition for time-exponential stability 
$\Re \spec( \cL^{\eps,\kappa}_\sigma) <0$ for all $\sigma \in [-\pi/X^\eps, \pi/X^\eps)$ is in this
case not possible, since $\partial_x \bar U^{\eps,\kappa}$ by translation-invariance of the underlying
equation \eqref{std} is a zero eigenfunction of $L^{\eps,\kappa}_0$.
However, as shown in \cite{S1,S2,JZ,SSSU,JNRZ1}, 
assuming the transversality condition that $0$ be a simple eigenvalue of $\cL^{\eps,\kappa}_0$,
a sufficient condition for {\it time-algebraic} linear and
nonlinear stability is Schneider's {\it diffusive stability condition}:
\be\label{diffstab}
\hbox{$\Re \spec (\cL^{\eps,\kappa}_\sigma) \leq -\theta \sigma^2$ 
for all $\sigma \in [-\pi/X^\eps, \pi/X^\eps)$, for some $\theta>0$.}
\ee

Our following, first main result rigorously validates the formal Eckhaus stability condition.

\begin{theorem}[Stability]\label{main}
Under Turing Hypotheses \ref{hyp:Turing},
for quasilinear nonlinearity $\cN$ and $\mu=\eps^2$, for any $\nu_0>0$ there is
$\eps_0>0$ such that, for $\eps\in[0,\eps_0)$ and $\kappa^2 \leq (1-\nu_0)\kappa_S^2$, 
	the solutions $\bar U^{\eps,\kappa}$ of \eqref{std} described in Proposition \ref{mainLS}
	satisfy \eqref{diffstab} hence are linearly and nonlinearly stable, 
	while for $\eps\in[0,\eps_0)$ and $\kappa^2 \geq (1+\nu_0)\kappa_S^2 $, they fail \eqref{specstab}
	hence are linearly exponentially unstable.
\end{theorem}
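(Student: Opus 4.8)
The plan is to combine a Bloch--Floquet decomposition with a rigorous Lyapunov--Schmidt reduction of the resulting family of eigenvalue problems to a $2\times 2$ matrix, and then, in the singular limit $\eps\to 0$, to identify that reduced matrix with the Bloch symbol of the linearization of (cGL) about the periodic solution \eqref{GLper}, whose spectrum is computed explicitly in Section \ref{sec:cglstability}. First I would record that by \cite{G} and \eqref{stdBloch} one has $\spec \cL^{\eps,\kappa}=\bigcup_\sigma \spec \cL^{\eps,\kappa}_\sigma$, so it suffices to locate the spectrum of the period-cell operators $\cL^{\eps,\kappa}_\sigma$ near the imaginary axis, uniformly in $\sigma$. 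At $\eps=0$ the wave is $\bar U\equiv 0$ and $\cL^{0,\kappa}_\sigma$ is the constant-coefficient operator $L(0)$ (written in the co-moving frame) conjugated by $e^{i\sigma x}$, with spectrum $\{\tl_j(nk_*+\sigma,0)+\text{(transport shift)}\}_{n\in\ZZ,\,1\le j\le n}$; Hypotheses (H1)--(H3) then show this set meets $\{\Re z\ge 0\}$ only through the two simple points coming from $\tl(\pm k_*,0)$ at $n=\pm1$, $\sigma=0$, and that for $|\sigma|\ge\delta>0$, or for $n\ne\pm1$ and any $\sigma$, it lies in $\{\Re z\le -c(\delta)<0\}$. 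Continuity of $S$ and of the (quasilinear) nonlinearity propagates this to $\cL^{\eps,\kappa}_\sigma$ for $\eps$ small, so for $|\sigma|\ge\delta$ there is no spectrum in $\Re z\ge -c/2$; in the nonlocal and semilinear-hyperbolic cases the needed uniformity as $|k|\to\infty$ is precisely the content of (H1)--(H3). Only $|\sigma|<\delta$ then requires work.

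Next, for $|\sigma|<\delta$ and $\eps$ small, I would introduce the spectral projection $\Pi(\sigma;\eps,\kappa)$ of $\cL^{\eps,\kappa}_\sigma$ onto the portion of its spectrum in a fixed small disc about $0$: it has rank $2$ and perturbs analytically from the projection onto $\Span\{e^{ik_*x}r,\ e^{-ik_*x}\bar r\}$, with all remaining spectrum uniformly in $\Re z\le -c/2$. Writing the eigenvalue equation $\cL^{\eps,\kappa}_\sigma w=\lambda w$ in the splitting $w=\Pi w+(I-\Pi)w$, solving the invertible complementary block and substituting (Schur complement / standard Lyapunov--Schmidt) yields a $2\times2$ reduced matrix $M(\sigma;\eps,\kappa)$, analytic in $\sigma$ and smooth in $(\eps,\kappa)$, whose eigenvalues are exactly the two critical Bloch eigenvalues. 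Translation invariance of \eqref{std} forces $\cL^{\eps,\kappa}_0\,\partial_x\bar U^{\eps,\kappa}=0$, so $M(0;\eps,\kappa)$ is singular for every $\eps,\kappa$.

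Then comes the identification. Introducing the Eckhaus scaling $\mu=\eps^2$, $\sigma=\eps\hat\sigma$, $\lambda=\eps^2\hat\lambda$ and feeding in the expansions of $\bar U^{\eps,\kappa}$ and $\bar\Omega$ from Proposition \ref{mainLS} together with the matching expansions of $\Pi$, a computation paralleling the formal derivation of \eqref{eq:cGL} shows that $\eps^{-2}M(\eps\hat\sigma;\eps,\kappa)$ converges as $\eps\to 0$, uniformly for $\hat\sigma$ in compact sets and $\kappa^2\le(1-\nu_0)\kappa_E^2$, to the $2\times2$ Bloch symbol at frequency $\hat\sigma$ of the linearization of (cGL) about \eqref{GLper}. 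By the explicit computation of Section \ref{sec:cglstability}, that symbol has one branch $\hat\lambda_*(\hat\sigma)$ with $\hat\lambda_*(0)=0$ and $\Re\hat\lambda_*(\hat\sigma)=-\theta_0\hat\sigma^2+O(\hat\sigma^3)$, where $\theta_0>0$ exactly when $\kappa^2<\kappa_S^2$ (formula \eqref{Estab}) and $\theta_0<0$ when $\kappa^2>\kappa_S^2$, and a second branch whose real part is bounded strictly away from $0$ for $\kappa^2\le(1-\nu_0)\kappa_E^2$. Persistence of the sign of $\theta_0$ and of this gap under the $O(\eps)$ correction gives, for $\eps$ small: if $\kappa^2\le(1-\nu_0)\kappa_S^2$ then $\Re\spec\cL^{\eps,\kappa}_\sigma\le-\theta\sigma^2$ with $\theta>0$ uniformly in $\sigma$ (combining the $|\sigma|\ge\delta$ bound from the first step with the curvature estimate for $|\sigma|<\delta$), i.e.\ \eqref{diffstab} holds; this in particular shows $0$ is a simple eigenvalue of $\cL^{\eps,\kappa}_0$, supplying the transversality hypothesis, so linear and nonlinear stability follow from the diffusive-stability machinery of \cite{S1,S2,JZ,JNRZ1,SSSU}, extended as needed to the present higher-order/nonlocal/quasilinear class. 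Conversely, if $\kappa^2\ge(1+\nu_0)\kappa_S^2$, then for some $\sigma$ the matrix $M(\sigma;\eps,\kappa)$ has an eigenvalue of strictly positive real part, so \eqref{specstab} fails and the wave is exponentially unstable.

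The hard part will be Step three: one must compute $M$ to sufficiently high order in \emph{both} $\eps$ and $\sigma$ --- in particular the $O(\sigma^2)$ ``effective diffusion'' term and its $O(\eps^2)$ normalization, including the cross-interaction contributions fed by the $O(\eps^2)$ corrections to $\bar U^{\eps,\kappa}$, $\bar\Omega$ and to the critical eigenfunctions --- and then verify that the resulting coefficient is precisely the one producing $\kappa_S^2$ in \eqref{Estab}. Relatedly, since $\cN$ is quasilinear the linearization perturbs the principal symbol of $L(0)$ at order $\eps$, so the resolvent bounds, the separation of the two-dimensional critical block, and the reduction itself must all be carried out for an operator family that is \emph{not} a bounded perturbation of $L(0)$; controlling this uniformly (in particular as $|k|\to\infty$ in the hyperbolic case) is the second delicate point. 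The remaining ingredients --- the high-Floquet spectral gap, the analytic perturbation theory behind $\Pi$ and $M$, and the passage from \eqref{diffstab} to nonlinear stability --- are routine given the cited framework.
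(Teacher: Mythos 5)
Your overall strategy is the same as the paper's: Bloch--Floquet decomposition, a high-Floquet/high-mode spectral gap argument reducing matters to small $\sigma$, a Lyapunov--Schmidt (Schur-complement) reduction of the critical two-dimensional block to a $2\times2$ matrix $M(\sigma;\eps,\kappa)$, and identification of $\eps^{-2}M(\eps\hat\sigma;\eps,\kappa)$ with the Bloch symbol of the linearized (cGL) equation under the Eckhaus scaling. The paper implements the reduction with the \emph{fixed} projection $P$ onto $\ker(L(k_*,0)+k_*d_*\partial_\xi)$ rather than the $\eps$-dependent spectral projection, but that difference is cosmetic.

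There is, however, a genuine gap at the decisive step. To obtain the diffusive bound \eqref{diffstab} you need the critical curve $\lambda_*(\sigma)=c_1(\eps,\kappa)\sigma+c_2(\eps,\kappa)\sigma^2+\cO(\sigma^3)$ to satisfy $\Re c_1(\eps,\kappa)=0$ \emph{exactly}, for every small $\eps$ --- not merely up to $\cO(\eps)$ corrections. Your argument derives the sign of the $\hat\sigma^2$ coefficient from convergence of the rescaled reduced matrix to the (cGL) symbol and then invokes ``persistence under the $O(\eps)$ correction.'' But that correction, uniform on compact $\hat\sigma$-sets, can in principle contribute a term $\eps\, a\,\hat\sigma$ with $\Re a\neq 0$, i.e.\ $\eps^2 a\,\sigma$ in unscaled variables; for $0<|\sigma|\lesssim \eps^2|\Re a|/\theta$ this linear term dominates $-\theta\sigma^2$ and \eqref{diffstab} (indeed \eqref{specstab}) fails on one side of $\sigma=0$. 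Translation invariance gives only $\lambda_*(0)=0$, and simplicity of the zero eigenvalue gives nothing about $\partial_\sigma\lambda_*(0)$. The paper closes this via a symmetry argument (Lemma \ref{lem:Re0}): the reality of $L$ and $\cN$ yields $B(\eps,\kappa,\sigma,\lambda)\,RW=R\,B(\eps,\kappa,-\sigma,\bar\lambda)W$ with $RW=\bar W$, hence $\overline{\lambda_*(-\sigma)}=\lambda_*(\sigma)$ and therefore $c_1$ is purely imaginary for all $\eps,\kappa$. You need this (or an equivalent exact identity) to make the near-$\sigma=0$ part of your argument work.

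A second, smaller issue: your claimed convergence of $\eps^{-2}M(\eps\hat\sigma;\eps,\kappa)$ to the (cGL) Bloch symbol is not literally correct for the standard Floquet operator \eqref{stdBloch} in the convective case --- the two differ by a purely imaginary affine term $i\tilde d\hat\sigma$ coming from the mismatch between the co-moving frame of $\bar U^{\eps,\kappa}$ and the frame in which (cGL) is posed (see \eqref{connect} and Remark \ref{rem:BlochDiff}). This does not affect real parts, hence not the stability conclusion, but the identification statement should either use the modified Bloch operator \eqref{eq:BlochDef} or explicitly discard this imaginary shift. Your remaining steps --- the high-$\sigma$ gap, the region decomposition in $\hat\sigma$, and the passage from \eqref{diffstab} to nonlinear stability via the cited framework --- match the paper's Proposition \ref{prop:smallsigma} and Theorem \ref{thm:genstab}.
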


\subsection{Behavior/spectral expansion}\label{s:behavior}
Similarly, from the heuristic picture of \eqref{form}--\eqref{eq:cGL} as an approximate
attracting manifold for \eqref{std},
we may expect that asymptotic behavior of perturbed stable periodic solutions $U^{\eps,\kappa}$ be well
described by asymptotic behavior of solutions of (cGL).

As shown in \cite{JNRZ2,SSSU}, time-asymptotic behavior is closely related to the second-order expansion 
\be\label{Texp}
\lambda_*(\sigma)= \alpha i\sigma - \beta \sigma^2 +o(\sigma^2)
\ee
of the ``neutral,'' or ``critical'' spectral curve $\lambda=\lambda_*(\sigma)$ bifurcating from 
the simple translational zero eigenvalue $\lambda=0$ of $\cL^{\eps,\kappa}_0$, and
so we expect agreement here as well.
As shown in Section \ref{sec:cglstability}, the spectrum of the Floquet operators $\tilde{ \cL}^{\kappa}_\sigma$
about solutions \eqref{GLper} of (cGL) consists 
of a pair of eigenvalues
\be\label{cGLevals}
\tilde \lambda_j(\sigma)=\tilde c^j_0 + \tilde c^j_1  \sigma + \tilde  c^j_2 \sigma^2 , \qquad j=1,2
\ee
(see \eqref{cGLlambdas}) 
of which $\tilde \lambda_2$ is the critical eigenvalue of solution \eqref{GLper} 
passing through $\lambda=0$ for $\sigma$ ($\ell$ in the notation of Section \ref{sec:cglstability}) equal to $0$
and $\lambda_1$ a small negative eigenvalue bifurcating from the additional zero eigenvalue at
$(\eps, \sigma)=(0,0)$, determining the rate of attraction toward the approximate center manifold given by
the complex Ginzburg-Landau approximation \eqref{form}--\eqref{eq:cGL}.
More precisely, $\Re \tilde c^1_0<0$ in the supercritical case $\Re \gamma<0$, while
$\tilde c^2_0=0$, $\Re \tilde c^2_1=0$, and $\Re \tilde c^2_2<0$ in the stable case $\kappa^2< \kappa_S^2$.

Our following, second main result shows that Eckhaus' weakly nonlinear formalism successfully
predicts the expansions of the corresponding pair of critical eigenvalues of $\cL^{\eps,\kappa}_\sigma$
bifurcating from the double root at $(\eps,\sigma)=(0,0)$, and thus time-asymptotic behavior of perturbations
of $U^{\eps,\kappa}$.

\begin{theorem}[Spectral expansion]\label{main2}
	Let $u_{\eps,\kappa}$ be the solution from Proposition \ref{mainLS}.  Then there exist
	 $\tilde\eps_0\in(0,\eps_0]$ ($\eps_0$ as in Proposition \ref{mainLS}), $\sigma_0>0$ and $\delta>0$ 
	 such that for all $\eps\in[0,\tilde\eps_0)$, all $\sigma\in[0,\sigma_0)$  and all $\kappa^2 \leq \kappa_E^2$, 
	 the spectrum of $\cL^{\eps,\kappa}_\sigma$ has the decomposition:
	\be
	\begin{split}
		\spec( \cL^{\eps,\kappa}_\sigma)=S\cup\{\lambda_1,\lambda_2\}.
	\end{split}
	\ee
	where $\Re\lambda<-\delta$ for $\lambda\in S$ and $|\lambda_j|<<1$.
Moreover, setting $\sigma =:\eps \hat \sigma$, $\lambda_j=:\eps^2 \hat \lambda_j$ in accordance with the
	Ginzburg-Landau scaling \eqref{form}(ii), we have
	 \be\label{lamexp}
	\begin{split}
		&	\hat \lambda_{1}= \hat c^1_0+ \mathcal O( \hat \sigma),\\
		&	\hat \lambda_{2} =  \hat c^2_1 \hat \sigma
		+ \hat  c^2_2 \hat \sigma^2+\mathcal O(\hat \sigma^3),
	\end{split}
	\ee
	where
	\be\label{connect}
	\hbox{\rm $\hat c^1_0- \tilde c^1_0=\cO(\eps)$,
	$\hat c^2_2- \tilde c^2_2=\cO(\eps)$,
	and	
	$\hat c^2_1- \tilde c^2_1= i\tilde d + \cO(\eps)$ with $\tilde d$ real.}
	\ee
\end{theorem}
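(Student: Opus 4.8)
The plan is to realize the decomposition and the expansion \eqref{lamexp}--\eqref{connect} as the output of an exact Lyapunov--Schmidt (Riesz-projection) reduction of the Bloch eigenvalue problem $\cL^{\eps,\kappa}_\sigma w=\lambda w$ near $\lambda=0$, and then to match the resulting $2\times2$ reduced system, under the Ginzburg--Landau rescaling, with the explicitly diagonalizable linearization of (cGL) about the periodic solution \eqref{GLper}. Morally, \eqref{form}--\eqref{eq:cGL} is the formal center-manifold reduction of \eqref{std} near the double-zero mode at $(\eps,\sigma)=(0,0)$, so the substance is to show that the honest reduction reproduces it.

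First I would normalize the period: with $y=kx$, $k=k_*+\eps\kappa$, the operator $\cL^{\eps,\kappa}_\sigma=\cS(\d_x+i\sigma,x;\eps,\kappa)$ becomes a family of periodic-coefficient operators on $L^2_{\mathrm{per}}(S^1)$ depending analytically on $(\eps,\kappa)$ and on the rescaled Floquet parameter $\ell=\sigma/k$ (the $\ell$ of Section \ref{sec:cglstability}), which at $\eps=0$ is the constant-coefficient operator whose Fourier symbol is the co-moving shift of $S(\cdot,0)$. By Turing Hypotheses (H1)--(H3) together with a uniform spectral-gap bound for $|k|$ large --- valid for the differential, nonlocal, and admissible semilinear hyperbolic classes of $L$ --- the operator $\cL^{0,0}_0$ has an isolated semisimple eigenvalue $\lambda=0$ of multiplicity two, spanned by $e^{iy}r$ and its complex conjugate $e^{-iy}\bar r$ (the $m=\pm1$ Fourier modes), with the remainder of its spectrum, and that of $\cL^{0,0}_\sigma$ for all $m\ne\pm1$, contained in $\{\Re\lambda\le-2\delta\}$ uniformly for $\sigma$ small. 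Analytic (Kato) perturbation theory then furnishes, uniformly for $\eps\in[0,\tilde\eps_0)$, $\sigma\in[0,\sigma_0)$ and $\kappa^2\le\kappa_E^2$, a rank-two Riesz projection $\Pi(\eps,\kappa,\ell)$ varying analytically, giving the decomposition $\spec(\cL^{\eps,\kappa}_\sigma)=S\cup\{\lambda_1,\lambda_2\}$ with $\Re S\le-\delta$ and $|\lambda_j|\ll1$, the $\lambda_j$ being the eigenvalues of $M(\eps,\kappa,\ell):=\Pi\,\cL^{\eps,\kappa}_\sigma\,\Pi$ in an analytic basis extending $\{e^{iy}r,e^{-iy}\bar r\}$. (The case $\eps=0$ is immediate from the symbol, and $\lambda=0\in\spec(\cL^{\eps,\kappa}_0)$ for all $\eps$ by translation invariance, with eigenfunction $\d_x\bar U^{\eps,\kappa}$; this fixes the labeling and forces the constant term of $\hat\lambda_2$ to vanish.)

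Next I would expand $M$ in $\eps$ and $\ell$. The $\ell$-dependence enters only through the conjugation $\cS(\d_x+i\sigma,\cdot)$, i.e.\ through powers of $\d_\xi$ of the symbol, contributing the $\ell$- and $\ell^2$-coefficients via $\d_k\tl(k_*,0)$ and $\d_k^2\tl(k_*,0)$; the $\eps$-dependence enters through $\mu=\eps^2$ (giving $\eps^2\d_\mu\tl(k_*,0)$ on the diagonal) and through linearization of the quadratic $\cN$ about $\bar U^{\eps,\kappa}=\tfrac12\eps\alpha e^{iy}r+c.c.+\cO(\eps^2)$. Here the $e^{\pm2iy}$ and $\cO(\eps^2)$ harmonics of $\bar U^{\eps,\kappa}$ couple $e^{iy}r$ and $e^{-iy}\bar r$ precisely at order $\eps^2$, and after a near-identity change of basis removing the non-resonant $\cO(\eps)$ contributions the surviving diagonal cubic term is exactly the Landau constant $\gamma|\alpha|^2$ of \eqref{eq:cGL}. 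Collecting terms and rescaling $\sigma=\eps\hat\sigma$, $\lambda=\eps^2\hat\lambda$, one then checks that $\eps^{-2}M(\eps,\kappa,\eps\hat\sigma/k)$ converges as $\eps\to0$ --- uniformly for $\hat\sigma$ in bounded sets and $\kappa^2\le\kappa_E^2$ --- to the $2\times2$ linearization of (cGL) about \eqref{GLper} in the side-band basis $(a,\bar b)$ of Section \ref{sec:cglstability}, with $\cO(\eps)$ error and $\cO(\ell^3)$ higher-order tail. Diagonalizing the limit yields \eqref{lamexp}, and reading off coefficients gives \eqref{connect}: the matches $\hat c^1_0-\tilde c^1_0=\cO(\eps)$ and $\hat c^2_2-\tilde c^2_2=\cO(\eps)$ follow directly (the latter being the estimate that identifies the Eckhaus and Schneider criteria, hence underlying Theorem \ref{main}), while the $i\tilde d\hat\sigma$ discrepancy in $\hat c^1_2-\tilde c^1_2$ arises from the $\cO(\eps)$ mismatch between the true Floquet parameter and its cGL surrogate --- a pure phase/translation effect, hence purely imaginary with $\tilde d$ real.

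The main obstacle is the order-$\eps^2$ computation of $M$: it requires knowing $\bar U^{\eps,\kappa}$ --- in particular its $\cO(\eps^2)$ correction and its second harmonic --- and the Riesz projection to sufficient accuracy, carrying out the near-identity basis change that eliminates the resonant-looking but in fact non-resonant $\cO(\eps)$ terms, and recognizing the surviving cubic coefficient as exactly the formal Landau constant $\gamma$; equivalently, one must show that the exact reduction reproduces the formal cGL derivation of \eqref{eq:cGL} term by term, and track which eigenvalue of $M$ is the translational one. The quasilinear, nonlocal, and semilinear hyperbolic generality does not change this structure, but forces one to verify that the uniform spectral-gap input and the boundedness of the symbol manipulations (powers of $\d_\xi$ and their commutators with the periodic coefficients) persist in those settings.
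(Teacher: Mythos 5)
Your proposal is correct and follows essentially the same route as the paper: a two-dimensional reduction of the Bloch eigenvalue problem near the double zero at $(\eps,\sigma)=(0,0)$ (the paper uses Lyapunov--Schmidt with the fixed kernel projection $P$ and a modified Bloch operator \eqref{eq:BlochDef} rather than an $\eps$-dependent Riesz projection, a cosmetic difference), second-order Taylor expansion in $(\eps,\sigma)$, and term-by-term matching of the resulting $2\times2$ matrix with the cGL linearization of Section \ref{sec:cglstability}, with the $i\tilde d\hat\sigma$ shift traced to the group-velocity frame exactly as you say. The one mechanical point where your account differs is that no near-identity basis change is needed to kill the $\cO(\eps)$ and $\cO(\sigma)$ terms: the nonlinear contributions vanish by Fourier support and the linear ones vanish identically because of the wave-speed equation \eqref{eq:dequation} from the existence problem (Lemmas \ref{lem:GTaylorOE}--\ref{lem:GTaylorOS}), which is the same phase/frame bookkeeping you invoke later for the $i\tilde d$ discrepancy.
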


We note that the discrepancy $i \tilde d$  in \eqref{connect} (computed explicitly in Section 
\ref{sec:gencase})
corresponds to an extraneous term $\tilde d\partial_x$ in the linearized equations for (cGL), due to
the fact that (cGL) is in general posed in a frame that is moving with respect to the co-moving frame in which
$U^{\eps,\kappa}$ is stationary. 
As a purely imaginary term, it does not affect stability, but does affect behavior via convection at rate $\tilde d$.
This affine shift does not occur in the $\cO(2)$ symmetric generalized reaction diffusion case treated in
\cite{M1,M2,S1,S2,SZJV,S}, for which the wave and various coordinate frames are all stationary.
It is one of the main new subtleties in the analysis of the general $SO(2)$ case.

\subsection{Discussion and open problems}\label{s:disc}
Restricted to the reaction diffusion case, main Theorems \ref{main}-\ref{main2} recover the results 
obtained in \cite{S1,SZJV} for the Swift-Hohenberg and Brusselator models.
The extension from these individual models to general reaction diffusion systems, 
though expected, is new, resolving an important open problem cited in \cite{SZJV}.
In the general, convective case, Theorems \ref{main}-\ref{main2} are to our knowledge the first 
such results obtained for any system.

As hinted, perhaps, by the discussion at the beginning of Section \ref{s:behavior},
Proposition \ref{mainLS} and Theorem \ref{main2} together show in fact that time-asymptotic behavior
of stable bifurcating traveling waves of \eqref{std} is well predicted by that of periodic solutions 
\eqref{GLper} of (cGL).
For, as shown in \cite{JNRZ2,SSSU}, time-asymptotic behavior for either equation (exact or approximate)
is determined by the formal second-order {\it Whitham expansions} \cite{W,HK} determined by
the nonlinear dispersion relation for the associated existence problems:
$k_t + \Omega(k)_x= (d(k)k_{x})_x$ and $\kappa_t+\omega(k)_x=(\tilde d( \kappa)_{x})_x$, respectively;
specifially, the values of $d$ and $\tilde d$
and second-order Taylor expansions of $\Omega$ and $\omega$ at $k_*+ \eps \kappa$ and $\kappa$. 
The values of $d$, $\tilde d$ and the first-order Taylor expansions of $\Omega$, $\omega$, moreover, are
determined by the spectral expansion of the neutral curve \eqref{Texp}
given in Theorem \ref{main2}, hence agree after rescaling up to $O(\eps)$ error.
On the other hand differentiating both sides of \eqref{soln} in Proposition \ref{mainLS}, 
we find that $\partial_{\kappa}^2 \Omega= \eps^2 \partial_{\kappa}^2\omega + O(\eps^3)$,
hence, after (cGL) rescaling, again agree up to $O(\eps)$ error.


For readability, Theorems \ref{main}-\ref{main2} are established first in Sections \ref{sec:coper}-\ref{sec:gencase}
for the simplest type of nonlinearity $\cN(u)(x)= N(u(x))$ depending on $u$ but not its derivatives. 
More specifically, Theorem \ref{main} is obtained from Theorem \ref{thm:genstab} in this case, 
and Theorem \ref{main2} is obtained from Theorem \ref{thm:specagree}.
The extension to general nonlinearities is given in Section \ref{sec:other}.
There, the key result is Theorem \ref{thm:genmatch}, from which
Theorems \ref{main} and \ref{main2} can be obtained in the same way as Theorems \ref{thm:specagree} and \ref{thm:genstab} are obtained from Theorem \ref{thm:ReducedEqn}.
Notably, this includes
not only the quasilinear case described in the theorems, but also a wide class of nonlocal models, as arise
in water waves \cite{L}, chemotaxis, etc.
As described in \cite[Rmk. 4.4]{WZ}, one may treat by the same methods also 
``nonresonant'' semilinear hyperbolic models, an extension that may be useful for applications to kinetic models. See Remark \ref{rem:nonlocal} for a sketch of the extension to the nonlocal case.

As in the existence problem \cite{WZ}, a new technical difficulty arising in the
convective case is the presence of multiple moving coordinate frames, with periodic (cGL) waves moving at different
speed than corresponding bifurcating traveling-wave solutions of \eqref{std}: 
specifically, speeds $-\omega/\kappa$ vs. $\Omega/k$ in \eqref{soln}.
For the stability problem, there is yet a third relevant coordinate frame, further complicating the analysis.
Namely, as described in Section \ref{sec:cglstability}, the linear stability analysis of periodic waves \eqref{GLper}
of (cGL) is most conveniently carried out by reduction to constant coefficients, enforcing a coordinate
frame ($\xi$ \eqref{form}) moving with speed $-\Im \tl(k_*,0)/k_*$ different from either $-\omega/\kappa$ 
or $\Omega/k$.

Finally, we mention an important class of mechanochemical/hydrodynamical bifurcations arising, e.g., in vasculogenesis
 \cite{MO,Ma,Mai,SBP}, to which our assumptions {\it do not apply}, namely, systems
$\d_t w+\d_x f(w)=g(w)+ \d_x(b(w)\d_x w)$, $w\in \R^n$ for which $g$ is of partial rank $r$, 
satisfying $\ell_j g\equiv 0$ for constant vectors $\ell_j$, $j=1, \dots, n-r$.
As described further in \cite{WZ}, these possess {\it conservation laws}
$ \int \ell_j w \, dx\equiv \const$, $j=1, \dots n-r$,
as a consequence of which Hypothesis (H3) necessarily fails at $k=0$.
This case has been treated in \cite{MC,S} for a model $O(2)$-invariant Swift-Hohenberg type equation
possessing a single conservation law, and in \cite{HSZ} for $SO(2)$ models of
B\'enard-Marangoni and thin-film flow,
with the result that behavior is well-predicted by an extended 
Ginzburg-Landau approximation consisting of a real Ginzburg-Landau equation in $A$, coupled with
a scalar diffusion equation in $B$, where $A$ as here is amplitude of critical linear modes and $B$
is related to the conserved quantity induced by the conservation law.

The extension of this analysis to the general convective, multi-conservation law case relevant to vasculogenesis
we consider an important open problem.
An important further extension would be 
to treat the case of incomplete parabolicity $\det B=0$ occurring for a number of physical 
models.


\section{Linear Stability of periodic Complex Ginzburg-Landau solutions}\label{sec:cglstability}
We begin by recalling the linearized stability analysis for periodic (cGL) solutions \cite{AK,M3}.
	Consider the general complex Ginzburg-Landau equation
	\begin{equation}\label{eq:genericCGL}
		A_t=aA_{xx}+bA+c|A|^2A
	\end{equation}
	where $a,b,c$ are complex numbers with the appropriate signs on their real parts, i.e. $\Re(a),\Re(b)>0$ and $\Re(c)<0$. Assume that $A$ has the form
	\begin{equation}\label{eq:genericAnsatz}
		A(x,t)=\a e^{i(\kappa x-\omega t)}
	\end{equation}
	where, without loss of generality, $\kappa ,\omega\in\RR$ are yet to be determined constants and $\a$ is positive real.
	This can be accomplished by performing a phase shift and taking advantage of the $SO(2)$ invariance of \eqref{eq:genericCGL}. 
	
	Plugging \eqref{eq:genericAnsatz} into \eqref{eq:genericCGL}, we obtain the nonlinear
	dispersion relation 
	\begin{equation}\label{eq:qOmegaEqn}
		-i\omega=-a\kappa ^2+b+c\a^2.
	\end{equation}
	Solving real and imaginary parts separately in \eqref{eq:qOmegaEqn}, we find that
		$$
			\omega=\Im(a)\kappa^2-\Im(b)-\Im(c)\a^2,
			\quad
			0=-\Re(a)\kappa^2+\Re(b)+\Re(c)\a^2.
			$$
	The second equation is solvable on the range of existence
	\begin{equation}\label{eq:CGLExistence}
		\kappa^2\leq \kappa^2_E:=\frac{\Re b}{\Re a}
	\end{equation}
	yielding $\omega=\Im(a)\kappa^2-\Im(b)-\Im(c)\a^2$ and
		$\a^2=\frac{-\Re b+\Re a \kappa^2}{\Re c} $ as functions of $\kappa$.  

		\subsection{Linear stability analysis}\label{s:linstab_cGL}
	We now perturb the solution $A(x,t)$ constructed above by
	\begin{equation}
		u(x,t)=\left(\a+B(x,t) \right)e^{i(\kappa x-\omega t)},
	\end{equation}
	factoring out periodic behavior to obtain $B$ as a perturbation of a constant solution $\alpha$.

	Plugging this Ansatz into \eqref{eq:genericCGL} gives
	\begin{equation}\label{prestep}
		-i\omega u+B_te^{i(\kappa x-\omega t)}=-\kappa ^2u+2i\kappa B_xe^{i(\kappa x-\omega t)}+bu+c(|\a|^2\a+2|\a|^2B+\a^2\overline{B}+\cO(|B|^2))e^{i(\kappa x-\omega t)}.
	\end{equation}

	We can simplify \eqref{prestep} using the fact that $\a e^{i(\kappa x-\omega t)}$ is a solution, factoring out the exponential term $e^{i(\kappa x-\omega t)}$, and dropping the $\cO(|B|^2)$ terms, to obtain
	a constant-coefficient equation
	\begin{equation}
		-i\omega B+B_t=aB_{xx}+2i\kappa aB_x-\kappa^2aB+bB+c\a^2B+c\a^2(B+\overline{B})
	\end{equation}
	in the modified unknown $B$.
	Applying \eqref{eq:qOmegaEqn} gives, finally, the linearized (cGL) equation
	\begin{equation}\label{eq:LinearizedGenericCGL}
		B_t=aB_{xx}+2i\kappa aB_x+c\a^2(B+\overline{B}),
	\end{equation}
	again, in the coordinates with background periodic behavior factored out.

	We now write $B=u+iv$ where $u$ and $v$ are two real-valued functions. This gives the system
	\begin{equation}\label{eq:LinearizedSystem}
		\bp u_t \\ v_t \ep=\bp \Re a & -\Im a \\ \Im a & \Re a\ep\bp u_{xx} \\ v_{xx} \ep+\bp -2\kappa\Im a & -2\kappa\Re a \\ 2\kappa\Re a & -2\kappa\Im a\ep \bp u_x \\ v_x \ep +\bp 2\a^2\Re c & 0 \\ 2\a^2\Im c & 0\ep\bp u \\ v \ep.
	\end{equation}
	Assume that
	\begin{equation}
		\bp u \\ v\ep=\bp u_0\\ v_0\ep e^{i\sigma x+\l t}
	\end{equation}
	for $\sigma \in\RR$ small and $\l\in\CC$ to be determined. Plugging this 
	into \eqref{eq:LinearizedGenericCGL} gives
	\begin{equation}\label{eq:hopefulreducedeqn}
		\l\bp u_0\\v_0\ep =\left(-\sigma ^2\bp \Re a & -\Im a \\ \Im a & \Re a\ep+i\sigma \bp -2\kappa\Im a & -2\kappa\Re a \\ 2\kappa\Re a & -2\kappa\Im a\ep+\bp 2\a^2\Re c & 0 \\ 2\a^2\Im c & 0\ep\right)\bp u_0\\v_0\ep,
	\end{equation}
	i.e., that $\l$ is an eigenvalue of the matrix on the right-hand side. 

	Computing the eigenvalues of this matrix and then Taylor expanding them about $\sigma=0$ gives
	\begin{equation}\label{cGLlambdas}
		\begin{split}
			\l_1=2\a^2\Re c+\cO(\sigma )\\
			\l_2=-2i\kappa\left(\Im a -\frac{\Im c \Re a}{\Re c}\right)\sigma +\frac{(2\kappa^2\Im c ^2\Re a^2+\a^2\Im a\Im c\Re c^2+\Re a\Re c^2(2\kappa^2\Re a+\a^2\Re c))\sigma^2}{(\a^2\Re c^3)}+\cO(\sigma^3)
		\end{split}
	\end{equation}
	Requiring that both eigenvalues have negative real part yield the range of stability
	\begin{equation}\label{eq:StableBand}
\kappa^2<\kappa_S^2:=\frac{\Im a\Im c\Re b\Re c+\Re a\Re b\Re c^2}{\Re a(2\Im c^2\Re a+\Im a\Im c\Re c+3\Re a\Re c^2)},
	\end{equation}
	corresponding to the Eckhaus stability criterion \cite{E1,AK}.

The band of stability \eqref{eq:StableBand} is nontrivial under the {\it Benjamin-Feir-Newell} criterion
	\begin{equation}\label{Eq:BFNCriterion}
			\Im a\Im c\Re b\Re c+\Re a\Re b\Re c^2>0.
	\end{equation}

\begin{remark}\label{normrmk}
One can normalize $a=1+i\tilde{\a}$, $b=1$, $c=-1-i\tilde{\beta}$ via a sequence of coordinate changes to 
recover the more usual forms of the amplitude $\a^2=1-\kappa^2$, existece and stability boundaries 
$\kappa_E^2=1$ and $\kappa_S^2=\frac{1+\tilde{\a}{\tilde{\b}}}{3+\tilde{\a}\tilde{\b}+2\tilde{\b}^2}$,
	and Benjamin-Feir-Newell criterion $1+\tilde{\a}\tilde{\b}>0$ \cite{SD},
from which we readily see also the relation 
	\be\label{ineq}
	|\kappa_S|<|\kappa_E|.
	\ee
	\end{remark}

		
	\section{Co-periodic Stability}\label{sec:coper}
	We next carry out a rigorous stability analysis by Lyapunov-Schmidt reduction
	for the bifurcating periodic solutions of \eqref{std},
	starting here with the co-periodic case, or stability with respect to perturbations that are periodic
	with the same period as the background wave.
	We analyze stability with respect to general perturbations in the following section.
	For simplicity of exposition, we restrict in both this and the next section to the case of
	the simplest possible nonlinearity $\cN$, consisting of a function on $u$ alone, with no derivatives,
	treating general nonlinearities in Section \ref{sec:other}.

	Define the co-periodic Bloch operator $B(\e,\kappa,\l)$ by
	\begin{equation}\label{eq:BlochOperator}
		B(\e,\kappa,\l)=L(k,\mu)+d(\e,\kappa)k\d_\xi+D\cN(\tilde{u}_{\e,\kappa})-\l
	\end{equation}
	where $L$ and $\cN$ are as in \eqref{std}, with $\cN(u)= N(u(x))$ 
	quadratic order $C^s$ function. 
	The zero set of $ B(\e,\kappa,0)$ is the spectrum of $\cL^{\eps,\kappa}$ considered as an operator
	on the interval $[0,X^{\eps,\kappa})$ with periodic boundary conditions, where $\cL^{\eps,\kappa}$ as
	in the introduction is the linearization of \eqref{std} about $U^{\eps,\kappa}$, and 
	$X^{\eps,\kappa}=1/k$ is the period of $U^{\eps,\kappa}$, with $k=k_*+ \eps\kappa$.

	For later use, we compute $B(0,\kappa,\l)$ and the first two $\e$-derivatives of $B(\e,\kappa,\l)$ evaluated at $\e=0$:
	\begin{equation}\label{eq:D0BDeps0}
		B(0,\kappa,\l)=L(k_*,0)+k_*d_*\d_\xi-\l
	\end{equation}
	\begin{equation}\label{eq:DBDeps}
		B_\e(0,\kappa,\l)=\kappa L_k(k_*,0)D_\xi+\left(k_*d_\e(0,\kappa)+\kappa d_*\right)\d_\xi+D^2\cN(0)(\d_\e\tilde{u}_{0,\kappa},\cdot)
	\end{equation}
	\begin{equation}\label{eq:D2BDeps2}
	\begin{split}
		B_{\e\e}(0,\kappa,\l)=\kappa^2L_{kk}(k_*,0)D_\xi^2+\mu_{\e\e}L_\mu(k_*,0)+\left(d_{\e\e}(0,\kappa)k_*+2\kappa d_\e(0,\kappa) \right)\d_\xi+\\+D^2\cN(0)(\d_\e^2\tilde{u}_{0,\kappa},\cdot)+D^3\cN(0)(\d_\e\tilde{u}_{0,\kappa},\d_\e\tilde{u}_{0,\kappa},\cdot),
		\end{split}
	\end{equation}
	where $L_k$, $L_{kk}$, $L_\mu$ and $D_\xi$ are the operators
	\begin{align}\label{eq:derivop1}
		L_k(k,\mu)U(\xi)&=\sum_{\eta\in\ZZ}S_k(k\eta,\mu)\hat{U}(\eta) e^{i\eta\xi},\\
		L_{kk}(k,\mu)U(\xi)&=\sum_{\eta\in\ZZ}S_{kk}(k\eta,\mu)\hat{U}(\eta)e^{i\eta\xi},\\
		L_\mu(k,\mu)U(\xi)&=\sum_{\eta\in\ZZ}S_\mu(k\eta,\mu)\hat{U}(\eta)e^{i\eta\xi},\\
		D_\xi{U}(\eta)&=\sum_{\eta\in\ZZ}\eta\hat{U}(\eta)e^{i\eta\xi}.
	\end{align}

		A few remarks are in order about operators \eqref{eq:derivop1}.
		First note that neither $L_k$ nor $D_\xi$ are real operators; but their composition is a real operator. The second remark, is that one needs bounds on the derivatives of the symbol with respect to $k$ to properly make sense of these formulae. Since we will only be interested in the $\pm 1$ modes of these expansions; the behavior of the symbol and it's derivatives near infinity is irrelevant for us here.

	As in \cite{WZ}, we set $P$ be the projection onto the kernel of $L(k_*,0)+k_*d_*\d_\xi$. To be precise, we recall that $P$ is given by
	$$
		PU(\xi)=r\ell\hat{U}(1)e^{i\xi}+c.c.,
	$$
	where $\ell$ and $r$ are the left and right (resp.) eigenvectors associated to the critical eigenvalue $\tl(k_*,0)$.
	Let $W$ be in the kernel of $B(\e,\kappa,\l)$, and write $W=\Upsilon_{\b}+\cV(\e,\kappa,\l)$ 
	for $\b\in\CC$ and $P\cV=0$, where
	\begin{equation}\label{eq:upsilondef}
		\Upsilon_{\b}=\Re\left(\b e^{i\xi}r\right). 
	\end{equation}
	\begin{remark}
If $r$ is a real vector, as in the reaction diffusion case, and $\b=\b_1+i\b_2$, then
		\begin{equation}
			\Upsilon_\b=\Re\left(\b e^{i\xi} \right)r=\b_1\cos(\xi)r-\b_2\sin(\xi)r.
		\end{equation}
	\end{remark}

	There are many possible forms for $\Upsilon_\b$ because $\ker(L(k_*,0)+d_*k_*\d_\xi)$ is a 2-dimensional 
	real vector space. Ultimately all are equivalent, however, this particular form in \eqref{eq:upsilondef} makes it slightly easier to identify the action of the Fourier multiplier operators. Another reason why this way is slightly preferable is that we may identify $B(x,t)$ in the derivation as $\b e^{\l t}$. That said, some caution must be made as this suggests that the formulas in what follows will depend analytically on $\b$, which will not be the case.

	 We begin solving $B(\e,\kappa,\l)W=0$ by first applying $(I-P)$ to both sides.
	\begin{equation}
		(I-P)B(\e,\kappa,\l)(\Upsilon_\b+\cV)=0
	\end{equation}
	But $(I-P)B(\e,\kappa,\l)$ is invertible for $\e=0$ and $\l$ small. So we may apply the inverse function theorem to solve for $\cV$ as a smooth function of $\e,\kappa,\l,\b$. Moreover, it is clear that $\cV(\e,\kappa,\l,\beta)$ is linear in $\b$ and that $\cV(0,\kappa,\l,\b)=0$ because $(I-P)$ commutes with $B(0,\kappa,\l)$ and $\Upsilon_\b$ is annihilated by $(I-P)$. We will find it notationally convenient to define the following linear operator
	\begin{equation}\label{eq:Tlambda}
		T_\l:=\left[(I-P)B(0,\kappa,\l)(I-P)\right]^{-1} 
	\end{equation}
	It is important to observe that $T_\l$ is analytic with respect to $\l$. We now compute the Taylor expansion of $\cV$ with respect to $\e$ about $\e=0$, starting with the derivative with respect to $\e$. To do this, differentiate $(I-P)B(\e,\kappa,\l)(\Upsilon_\b+\cV)=0$ with respect to $\e$, evaluate at $\e=0$ and then solve for $\d_\e\cV(0,\kappa,\l)$.
	\begin{equation}\label{eq:dVdeps}
		\d_\e\cV(0,\kappa,\l,\b)=-T_\l (I-P)B_\e(0,\kappa,\l)\Upsilon_\b 
	\end{equation}
	Similarly, by taking two derivatives with respect to $\e$, we can solve for $\d_\e^2\cV(0,\kappa,\l,\b)$ as
	\begin{equation}\label{eq:d2Vdeps2}
		\d_\e^2\cV(0,\kappa,\l,\b)=-T_\l(I-P)B_{\e\e}(0,\kappa,\l)\Upsilon_\b-2T_\l(I-P)B_\e(0,\kappa,\l)\d_\e\cV(0,\kappa,\l)
	\end{equation}
	Next, we look at $PB(\e,\kappa,\l)W=0$. We begin this by Taylor expanding both $B$ and $\cV$ as 
	\begin{equation}\label{eq:PBTaylor}
		\begin{split}
			PB(\e,\kappa,\l)(\Upsilon_\b+\cV)=P\left[B(0,\kappa,\l)+\e B_\e(0,\kappa,\l)+\frac{1}{2}\e^2 B_{\e\e}(0,\kappa,\l)+\cO(\e^3) \right]\\ \left(\Upsilon_\b+\e\cV_\e(0,\kappa,\l,\b)+\frac{1}{2}\e^3\cV_{\e\e}(0,\kappa,\l,\b)+\cO(\e^3) \right)=0
		\end{split}
	\end{equation}
	\begin{obs}\label{obs:keycommutator}
		$PB(0,\kappa,\l)=-\l P$
	\end{obs}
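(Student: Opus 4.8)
The claim is purely a statement about the operator $B(0,\kappa,\l)$ acting on the range of the projection $P$, so the plan is to unwind the definitions. Recall from \eqref{eq:D0BDeps0} that $B(0,\kappa,\l)=L(k_*,0)+k_*d_*\d_\xi-\l$, and that $P$ is the projection onto $\ker\bigl(L(k_*,0)+k_*d_*\d_\xi\bigr)$. The first step is to observe that $L(k_*,0)+k_*d_*\d_\xi$ is a Fourier multiplier operator (in the $\xi$ variable on $[0,X^{\eps,\kappa})$ with periodic boundary conditions), hence it commutes with the spectral projection $P$ onto its kernel; indeed $P$ itself is the Fourier multiplier that keeps only the $\pm1$ modes and then projects onto $\Span\{r,\bar r\}$ in the fiber, as in \eqref{eq:upsilondef}. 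Therefore $P\bigl(L(k_*,0)+k_*d_*\d_\xi\bigr)=\bigl(L(k_*,0)+k_*d_*\d_\xi\bigr)P$, and since $\Range P=\ker\bigl(L(k_*,0)+k_*d_*\d_\xi\bigr)$ this composite is the zero operator: $P\bigl(L(k_*,0)+k_*d_*\d_\xi\bigr)=0$.

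The second step is trivial bookkeeping: the remaining term in $B(0,\kappa,\l)$ is the scalar $-\l$, which commutes with everything, so
\[
PB(0,\kappa,\l)=P\bigl(L(k_*,0)+k_*d_*\d_\xi\bigr)-\l P=0-\l P=-\l P,
\]
which is the assertion. One should double-check the one subtle point, namely that $P$ really does commute with $k_*d_*\d_\xi$ and not merely with $L(k_*,0)$ separately: this holds because $\d_\xi$ is itself a Fourier multiplier ($D_\xi$ up to the factor $i$, in the notation of \eqref{eq:derivop1}), so the full operator $L(k_*,0)+k_*d_*\d_\xi$ is a single Fourier multiplier and its kernel-projection commutes with it by the spectral theorem for such operators. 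There is no analytic issue about the symbol at infinity here, exactly as noted in the remarks following \eqref{eq:derivop1}, because $P$ is supported on the $\pm1$ Fourier modes only.

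The only mild obstacle is therefore conceptual rather than computational: one must make sure that the decomposition $W=\Upsilon_\b+\cV$ with $P\cV=0$ is compatible with $P$ being a genuine (bounded, complementary) projection on whatever function space \eqref{eq:BlochOperator} is posed on, so that "$P$ projects onto the kernel" is meaningful and $P$ commutes with the Fourier multiplier. This is already established in \cite{WZ} and is implicit in the setup preceding \eqref{eq:upsilondef}, so the observation follows immediately and needs at most the two lines above.
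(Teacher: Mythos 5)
Your proof is correct and is exactly the argument the paper leaves implicit (the observation is stated without proof there): since $B(0,\kappa,\l)=L(k_*,0)+k_*d_*\d_\xi-\l$ and $P$ is the \emph{spectral} projection onto $\ker\bigl(L(k_*,0)+k_*d_*\d_\xi\bigr)$ — a Fourier multiplier supported on the $\pm1$ modes composed with the eigenprojection $\Pi$ in the fiber — it commutes with the operator, so $P\bigl(L(k_*,0)+k_*d_*\d_\xi\bigr)=\bigl(L(k_*,0)+k_*d_*\d_\xi\bigr)P=0$ and $PB(0,\kappa,\l)=-\l P$. You rightly flag the one genuine subtlety, namely that a mere kernel projection would only give $\bigl(L(k_*,0)+k_*d_*\d_\xi\bigr)P=0$ and not the needed left-multiplication identity, which is why the spectral/Fourier-multiplier structure of $P$ matters.
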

	From this observation, 
	we conclude that the $\cO(1)$ term in \eqref{eq:PBTaylor} is $-\l\Upsilon_\b$ as it should be. 
	Next, we show at $\cO(\e)$ that
	\begin{equation}\label{eq:PBTaylorOE}
		PB(0,\kappa,\l)\d_\e\cV(0,\kappa,\l,\b)+PB_\e(0,\kappa,\l)\Upsilon_\b=0.
	\end{equation}
	Namely, applying observation \eqref{obs:keycommutator} again, we find that $PB(0,\kappa,\l)\d_\e\cV(0,\kappa,\l,\b)=0$ as $P\cV=0$ implies $P\d_\e\cV=0$. From \eqref{eq:DBDeps}, we find that
	\begin{equation}\label{eq:PBTaylorOE2}
		PB_\e(0,\kappa,\l)\Upsilon_\b=\kappa PL_k(k_*,0)D_\xi\Upsilon_\b+\left(k_*d_\e(0,\kappa)+\kappa d_*\right)\d_\xi\Upsilon_\b+PD^2\cN(0)(\d_\e\tilde{u}_{0,\kappa},\Upsilon_\b).
	\end{equation}
	From the existence theory, Proposition \ref{mainLS}, we know that at $\e=0$, 
	$\d_\e\tilde{u}_{0,\kappa}=\Upsilon_\a$ for a real $\a$, hence the 
	final, nonlinear term vanishes in the above equation as it is Fourier supported in $\{0,\pm2 \}$ but $P$ first projects onto Fourier modes $\pm1$. We've also used the fact that $\d_\xi$ commutes with all Fourier multiplier operators. We may now apply spectral perturbation theory to simplify \eqref{eq:PBTaylorOE2} into
	\begin{equation}\label{eq:PBTaylorOE3}
		PB_\e(0,\kappa,\l)\Upsilon_\b=\frac{1}{2}\left[\kappa\tl_{k}(k_*,0)+i(k_*d_\e(0,\kappa)+\kappa d_* ) \right]\b e^{i\xi}r+c.c.
	\end{equation}
	But the bracketed expressions is identically zero, as can be seen from the equation defining $\delta=d-d_*$ in the Lyapunov-Schmidt, reproduced below
	\begin{equation}\label{eq:dequation}
		\Im\d_k\tl(k_*,0)\kappa+\frac{1}{2}\Im\d_k^2\tl(k_*,0)\kappa^2+\Im\d_\mu\tl(k_*,0)\mu+d_*\kappa+\delta k_*+\delta\kappa+\Im n(|\a|^2;\mu,d,k)+\cO(\e^3)=0
	\end{equation}
	Now making the identification $\kappa=\e\kappa$ and $\mu\sim\e^2$ and noting that $n(|\a|^2;\mu,d,k)=\cO(\e^2)$ gives us the desired conclusion for the bracketed expression by differentiating with respect to $\e$ and evaluating at $\e=0$.

	We now come to the most important term in the expansion, $\cO(\e^2)$
	\begin{equation}\label{eq:PBTaylorOEE}
		\frac{1}{2}PB_{\e\e}(0,\kappa,\l)\Upsilon_\b+PB_\e(0,\kappa,\l)\d_\e\cV(0,\kappa,\l)+\frac{1}{2}PB(0,\kappa,\l)\d_\e^2\cV(0,\kappa,\l)
	\end{equation}
	As before observation \eqref{obs:keycommutator} implies the last term vanishes identically. We first look at the $PB_{\e\e}$ term. We may expand it by applying \eqref{eq:D2BDeps2} to find
	\begin{equation}\label{eq:PBTaylorOEEBEE}
	\begin{split}
		PB_{\e\e}(0,\kappa,\l)\Upsilon_\b=\kappa^2PL_{kk}(k_*,0)D_\xi^2\Upsilon_\b+\mu_{\e\e}PL_\mu(k_*,0)\Upsilon_\b+\left(d_{\e\e}(0,\kappa)k_*+2\kappa d_\e(0,\kappa) \right)\d_\xi\Upsilon_\b+\\+PD^2\cN(0)(\d_\e^2\tilde{u}_{0,\kappa},\Upsilon_\b)+PD^3\cN(0)(\d_\e\tilde{u}_{0,\kappa},\d_\e\tilde{u}_{0,\kappa},\Upsilon_\b)
	\end{split}
	\end{equation}
	We adopt the convention that $\cN(U)=\cQ(U,U)+\cC(U,U,U)+\cO(|U|^4)$ as in the derivation of complex Ginzburg Landau. Note that $2\cQ(U,U)=D^2\cN(0)(U,U)$ and $6\cC(U,U,U)=D^3\cN(0)(U,U,U)$ by Taylor's theorem. We begin to simplify \eqref{eq:PBTaylorOEEBEE} by first computing $\d_\e^2\tilde{u}_{0,\kappa}$. By construction, we have
	\begin{equation}\label{eq:tildeueqn}
		L(k,\mu)\tilde{u}_{\e,\kappa}+d(k,\mu)k\d_\xi\tilde{u}_{\e,\kappa}+\cQ(\tilde{u}_{\e,\kappa},\tilde{u}_{\e,\kappa})+\cO(|\tilde{u}_{\e,\kappa}|^3)=0
	\end{equation}
	Observe that the nonlinearity is $\e^2\cQ(\Upsilon_\a,\Upsilon_\a)+\cO(\e^3)$, hence to $\cO(\e^2)$ we only have Fourier modes $\{0,\pm1,\pm2 \}$. Since we're interested in $PD^2\cN(0)(\d_\e^2\tilde{u}_{0,\kappa},\Upsilon_\b)$, we only need to compute the $\{0,\pm2 \}$ modes of $\d_\e^2\tilde{u}_{0,\kappa}$. Define the following matrices for $\eta\not=\pm 1$
	\begin{equation}
		S_\eta:=\left[S(\eta k_*,0)+i\eta d_*k_* \right]^{-1}
	\end{equation}
	where $S(k,\mu)$ is the symbol of $L(k,\mu)$. Plugging in the Taylor series for $\tilde{u}_{\e,\kappa}$ and Taylor expanding the symbol in \eqref{eq:tildeueqn} shows that
	\begin{equation}
		\frac{1}{2}S(0,0)\widehat{\d_\e^2\tilde{u}_{0,\kappa}}(0)+\frac{1}{4}\a^2\left[\cQ(r,\bar{r})+\cQ(\bar{r},r) \right]=0
	\end{equation}
	or equivalently using the symmetry of $\cQ$
	\begin{equation}\label{eq:mode0}
		\widehat{\d_\e^2\tilde{u}_{0,\kappa}}(0)=-\frac{1}{2}\a^2S_0\left[\cQ(r,\bar{r})+\cQ(\bar{r},r) \right]=-\a^2S_0\cQ(r,\bar{r})
	\end{equation}
	Similarly, we have
	\begin{equation}
		\frac{1}{2}\left[S(2k_*,0)+2ik_*d_* \right]\widehat{\d_\e^2\tilde{u}_{0,\kappa}}(2)+\frac{1}{4}\a^2\cQ(r,r)=0
	\end{equation}
	equivalently
	\begin{equation}\label{eq:mode2}
		\widehat{\d_\e^2\tilde{u}_{0,\kappa}}(2)=-\frac{1}{2}\a^2S_2\cQ(r,r)
	\end{equation}
	\begin{remark}
		That \eqref{eq:mode0} and \eqref{eq:mode2} both have $\a^2$ follows from taking $\a\in\RR$, if $\a\in\CC\backslash\RR$ then they become $|\a|^2$ and $\a^2$ respectively.
	\end{remark}
	Combining \eqref{eq:upsilondef}, \eqref{eq:mode0}, \eqref{eq:mode2} allows us to compute the quadratic term in \eqref{eq:PBTaylorOEEBEE}
	\begin{equation}\label{eq:PBTaylorQuad1}
		PD^2\cN(0)(\d_\e^2\tilde{u}_{0,\kappa},\Upsilon_\b)=2P\cQ\left(-\a^2S_0\cQ(r,\bar{r})-\frac{1}{2}\a^2S_2\cQ(r,r)\e^{2i\xi}+c.c.,\frac{1}{2}\left(\b e^{i\xi}r+c.c\right) \right)
	\end{equation}
	Note that the coefficient of $e^{i\xi}$ in \eqref{eq:PBTaylorQuad1} is
	\begin{equation}\label{eq:PBTaylorQuad2}
		-\a^2\Pi\left[\cQ(S_0\cQ(r,\bar{r}),r)\b+\frac{1}{2}\cQ(S_2\cQ(r,r),\bar{r})\bar{\b} \right]
	\end{equation}
	For the cubic term in \eqref{eq:PBTaylorOEEBEE}, we have
	\begin{equation}\label{eq:PBTaylorCubic1}
		PD^3\cN(0)(\d_\e\tilde{u}_{0,\kappa},\d_\e\tilde{u}_{0,\kappa},\Upsilon_\b)=6P\cC(\Upsilon_\a,\Upsilon_\a,\Upsilon_\b)
	\end{equation}
	This has $e^{i\xi}$ coefficient
	\begin{equation}\label{eq:PBTaylorCubic2}
		\frac{6}{8}\a^2\Pi\left[\cC(r,r,\bar{r})\bar{\b}+\cC(r,\bar{r},r)\b+\cC(\bar{r},r,r)\b \right]
	\end{equation}

	Next we look at the term $PB_\e(0,\kappa,\l)\d_\e\cV(0,\kappa,\l)$
	\begin{equation}\label{eq:PBTaylorBEVE}
	\begin{split}
		&PB_\e(0,\kappa,\l)\d_\e\cV(0,\kappa,\l)=\\&
		=P\left[\kappa L_k(k_*,0)D_\xi+\left(k_*d_\e(0,\kappa)+\kappa d_*\right)\d_\xi+D^2\cN(0)(\Upsilon_\a,\cdot)\right](-T_\l (I-P)B_\e(0,\kappa,\l)\Upsilon_\b)
	\end{split}
	\end{equation}
	This consists of four terms, depending on whether the one takes the ``linear'' portion of $B_\e$ or the ``nonlinear'' portion of $B_\e$. First, we 
	compute the``linear-linear'' term 
	\ba\label{eq:linlin}
		-\kappa^2 PL_k(k_*,0)D_\xi(I-P)&T_\l(I-P)L_k(k_*,0)D_\xi\Upsilon_\b=\\
		&-\kappa^2 PL_k(k_*,0)D_\xi(I-P)N(I-P)L_k(k_*,0)D_\xi\Upsilon_\b+\cO(|\l|),
	\ea
	where we've used the fact that $T_\l:(I-P)L^2_{per}(\RR;\RR^n)\rightarrow(I-P)H^m_{per}(\RR;\RR^n)$, and so the terms containing a $\d_\xi$ are zero. 
	Here, as in \cite{WZ}, we are using the notation
	$$
	N=\left[(I-P)(S(k_*,0)+ik_*d_*)(I-P) \right]^{-1}=T_0.
	$$
	Next, we 
	compute the``nonlinear-linear'' term,
	\begin{equation}\label{eq:nonlinlin}
		PD^2\cN(0)(\Upsilon_\a,T_\l(I-P)\left[\kappa L_k(k_*,0)D_\xi+\left(k_*d_\e(0,\kappa)+\kappa d_*\right)\d_\xi\right]\Upsilon_\b )=0,
	\end{equation}
	which vanishes by the fact that it has Fourier support contained in $\{0,\pm2\}$. 
	Similarly, the linear-nonlinear term vanishes:
	\begin{equation}\label{eq:linnonlin}
		P\left[\kappa L_k(k_*,0)D_\xi+\left(k_*d_\e(0,\kappa)+\kappa d_*\right)\d_\xi\right]T_\l(I-P)D^2\cN(0)(\Upsilon_\a,\Upsilon_\b)=0.
	\end{equation}

	Finally, we have the nonlinear-nonlinear term,
	\ba\label{eq:nonlinnonlin1}
		PD^2\cN(0)(\Upsilon_\a,T_\l(I-P)D^2\cN(0)&(\Upsilon_\a,\Upsilon_\b))=\\
		& PD^2\cN(0)(\Upsilon_\a,N(I-P)D^2\cN(0)(\Upsilon_\a,\Upsilon_\b))+\cO(|\l|).
		\ea
	We start by computing 
	\ba
	D^2\cN(0)(\Upsilon_\a,\Upsilon_\b)&=\frac{1}{4}\a\big[D^2\cN(0)(r,r)\b e^{2i\xi}+D^2\cN(0)(\bar{r},r)\b\\
&\quad +D^2\cN(0)(r,\bar{r})\bar{\b} +D^2\cN(0)(\bar{r},\bar{r})\bar{\b}e^{-2i\xi} \big].
		\ea
	Thus, \eqref{eq:nonlinnonlin1} has as coefficient of $e^{i\xi}$
	\begin{equation}\label{eq:nonlinnonlin2}
		\frac{1}{8}\a^2P\left[D^2\cN(0)(\bar{r},S_2D^2\cN(0)(r,r) )\b+D^2\cN(0)(r,S_0D^2\cN(0)(r,\bar{r})(\b+\bar{\b}) ) \right],
	\end{equation}
	or, in terms of $\cQ$,
	\begin{equation}\label{eq:nonlinnonlin3}
		\frac{1}{2}\a^2P\left[\cQ(\bar{r},S_2\cQ(r,r) )\b+\cQ(r,S_0\cQ(r,\bar{r})(\b+\bar{\b})) \right].
	\end{equation}
	Combining \eqref{eq:nonlinnonlin3}, \eqref{eq:linlin}, \eqref{eq:PBTaylorCubic2}, \eqref{eq:PBTaylorQuad2}, 
	and \eqref{eq:PBTaylorOEEBEE} gives:
	\ba \label{eq:ewww}
\frac{1}{2}PB_{\e\e}&(0,\kappa,\l)\Upsilon_\b+PB_\e\d_\e\cV(0,\kappa,\l,\b)=
		\frac{1}{4}\big(\kappa^2\Pi S_{kk}(k_*,0)\b r +\mu_{\e\e}\Pi S_\mu(k_*,0)\b r\\
		&\quad +i\big(d_{\e\e}(0,\kappa)k_*+2\kappa d_\e(0,\kappa) \big)\b r \big)e^{i\xi}
			+\frac{1}{2}\big(-\a^2\Pi\big[\cQ(S_0\cQ(r,\bar{r}),r)\b\\
			&\quad +\frac{1}{2}\cQ(S_2\cQ(r,r),\bar{r})\bar{\b} \big]
			+\frac{6}{8}\a^2\Pi\big[\cC(r,r,\bar{r})\bar{\b}+\cC(r,\bar{r},r)\b+\cC(\bar{r},r,r)\b \big] \big)e^{i\xi}\\
&\quad -\frac{1}{2}\big(\kappa^2 \Pi S_k(k_*,0)(I-P)N(I-P)S_k(k_*,0)\b r+\a^2\Pi\big[\cQ(\bar{r},S_2\cQ(r,r) )\b \\
&\quad +\cQ(r,S_0\cQ(r,\bar{r})(\b+\bar{\b})) \big] \big)e^{i\xi}+\cO(|\l|)+c.c.
\ea

Note that we have the identity
	\begin{equation}\label{eq:spectralid}
		\tl_{kk}(k_*,0)r=2\Pi(\frac{1}{2}S_{kk}(k_*,0)r-S_k(k_*,0)(I-\Pi)N(I-\Pi)S_k(k_*,0)r ).
	\end{equation}
	Looking at \eqref{eq:ewww}, we can factor out $\frac{1}{2}$ and apply \eqref{eq:spectralid} to simplify the linear part to 
	\begin{equation}\label{eq:ewwwlin}
	\begin{split}
		\frac{1}{2}\left(\kappa^2\Pi S_{kk}(k_*,0)\b r+\mu_{\e\e}\Pi S_\mu(k_*,0)\b r-\kappa^2 \Pi S_k(k_*,0)(I-P)N(I-P)S_k(k_*,0)\b r \right. \\
		\left. i\left(d_{\e\e}(0,\kappa)k_*+2\kappa d_\e(0,\kappa)\right)\b r\right)e^{i\xi}+c.c.=\\
		=\frac{1}{2}\left(\tl_{kk}(k_*,0)\kappa^2+\mu_{\e\e}\tl_\mu(k_*,0)+i(d_{\e\e}(0,\kappa)+2\kappa d_\e(0,\kappa)) \right)\b re^{i\xi}+c.c.
	\end{split}
	\end{equation}

	In order to simplify the nonlinear part of \eqref{eq:ewww}, we recall that $\g$ in the complex Ginzburg-Landau can be found through the formula 
	\begin{equation}\label{eq:gammadef}
		\begin{split}
			\g=\frac{1}{8}\ell\left[3\cC(r,r,\bar{r})-4\cQ(r,S_0\cQ(r,\bar{r}))-2\cQ(S_2\cQ(r,r),\bar{r})\right],
		\end{split}
	\end{equation}
where we've used the symmetry of the forms $\cQ$ and $\cC$.
	First we collect the $\bar{\beta}$ terms in \eqref{eq:ewww} as
	\begin{equation}\label{eq:ewwwbar}
		\frac{1}{2}\bar{\b}\a^2\Pi\left(-\frac{1}{2}\cQ(S_2\cQ(r,r),\bar{r})+\frac{3}{4}\cC(r,r,\bar{r})-\cQ(r,S_0\cQ(r,\bar{r})) \right)=\g\bar{\b}\a^2r.
	\end{equation}
	Next, we look at the $\b$ part of the nonlinearity of \eqref{eq:ewww}:
	\begin{equation}\label{eq:ewwwbeta}
		\frac{1}{2}\b\a^2\Pi\left(-\cQ(S_0\cQ(r,\bar{r}),r)+\frac{12}{8}\cC(r,\bar{r},r)-\cQ(\bar{r},S_2\cQ(r,r))-\cQ(r,S_0\cQ(r,\bar{r})\right)=2\b\a^2\g r.
	\end{equation}
	Note that in both of these equations, we have ignored the ``universal'' $\frac{1}{2}$ 
multiplying all of the terms in \eqref{eq:ewww}.

	Since the $e^{i\xi}$ and $e^{-i\xi}$ modes are complex conjugates of each other, and everything is parallel to $r$ in the $e^{i\xi}$ mode, it suffices to solve for the coefficient of $e^{i\xi }$.
	Combining all of this, we find that \eqref{eq:ewww} reduces to the much nicer form
	\begin{equation}\label{eq:oeps2}
		\frac{1}{2}\left[ \frac{1}{2}\left(\tl_{kk}(k_*,0)\kappa^2+\mu_{\e\e}\tl_\mu(k_*,0)+i(d_{\e\e}(0,\kappa)k_*+2\kappa d_\e(0,\kappa)) \right)\b+2\g\a^2\b+\g\a^2\bar{\b}\right]+\cO(|\l|).
	\end{equation}

	\begin{remark}
		Although the expression in \eqref{eq:oeps2} is technically a complex scalar, it is not holomorphic in $\b$; we will find it better to solve for $\b_1:=\Re \b$ and $\b_2:=\Im \b$ by treating \eqref{eq:PBTaylor} as system of real variables.
	\end{remark}
	\begin{equation}
		\frac{1}{2}\left(\hal_{kk}(k_*,0)\kappa^2+\mu_{\e\e}\hal_\mu(k_*,0)+i(d_{\e\e}(0,\kappa)k_*+2\kappa d_\e(0,\kappa))\right)+\g \a^2=0
	\end{equation}
	by comparing to the $\cO(\e^2)$ term of the reduced equation in the Lyapunov-Schmidt reduction
	\begin{equation}\label{eq:LSreduced}
		\left(\tl_k(k_*,0)\kappa+\frac{1}{2}\hal_{kk}(k_*,0)\kappa^2+\tl_\mu(k_*,0)\mu+i(d_*\kappa+\delta k_*+\delta\kappa) \right)+n(|\a|^2;\mu,k,d)+\cO(\e^3)=0,
	\end{equation}
	writing $n(\a^2;\mu,k,d)=\g\e^2\a^2+\cO(\e^3)$, $\mu=\frac{1}{2}\mu_{\e\e}\e^2+\cO(\e^3)$, $\kappa=\e\kappa$ and then plugging in the Taylor expansion for $\delta=d-d_*$.
	We may rewrite \eqref{eq:PBTaylor} as the eigenvalue problem
	\begin{equation}\label{eq:coperiodic}
		-\l\bp \b_1 \\ \b_2 \ep +2\e^2\a^2\bp \Re\g & 0 \\ \Im\g & 0\ep \bp \b_1 \\ \b_2\ep +\cO(\e^3,\e^2|\l|)\b=0.
	\end{equation}

	\begin{remark}
		It is vitally important in the above calculations that all terms involving $d$ vanish
		identically.
		This is because in the Ginzburg-Landau expansion, 
		all information about the wave speed $d$ is ``hidden'' inside $\Hx$ and $\xi$. 
		Nowhere does the wave speed make a direct appearance in the Ginzburg-Landau or in the stability criteria.
	\end{remark}

	\begin{lemma}\label{lem:corefined}[Refined Error Estimate]
		The error in \eqref{eq:coperiodic} has the following form
		\begin{equation}\label{eq:improvederror}
			\bp \cO(\e^3,\e^2|\l|) & \cO(\e^2|\l|) \\
			\cO(\e^3, \e^2|\l|) & \cO(\e^2|\l|) \ep		
		\end{equation}
	\end{lemma}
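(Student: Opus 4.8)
The plan is to deduce the refinement \eqref{eq:improvederror} from the fact that $\cL^{\e,\kappa}_0$ carries an \emph{exact} zero eigenvalue, coming from translation invariance, which the reduced system \eqref{eq:coperiodic} must reproduce precisely. Write the left-hand side of \eqref{eq:coperiodic} as $\cE(\e,\kappa,\l)\b$, where $\cE(\e,\kappa,\l)\b$ denotes the coefficient of $e^{i\xi}r$ in $PB(\e,\kappa,\l)(\Upsilon_\b+\cV(\e,\kappa,\l,\b))$. Since $\Upsilon_\b=\Re(\b e^{i\xi}r)$ and the corrector $\cV$ depend \emph{real}-linearly on $\b$ (and vanish at $\b=0$), this map has the form $\cE(\e,\kappa,\l)\b=A(\e,\kappa,\l)\b+B(\e,\kappa,\l)\overline{\b}$ for complex scalars $A,B$, and \eqref{eq:coperiodic} amounts to $\cE=-\l I+2\e^2\a^2\Gamma+R$, with $\Gamma=\bp\Re\g & 0\\ \Im\g & 0\ep$ and $R=\cO(\e^3,\e^2|\l|)$ the error to be sharpened. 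The key elementary observation is that, in the real $(\b_1,\b_2)$-coordinates, a map of the form $\b\mapsto A(\b+\overline{\b})=2A\,\Re\b$ is represented by $\bp 2\Re A & 0\\ 2\Im A & 0\ep$, i.e. has \emph{identically} vanishing second column. As $-\l I+2\e^2\a^2\Gamma$ already has vanishing second column at $\l=0$, it therefore suffices to prove $A(\e,\kappa,0)=B(\e,\kappa,0)$: this forces the second column of $\cE(\e,\kappa,0)$, hence that of $R(\e,\kappa,0)$, to vanish.

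To obtain $A(\e,\kappa,0)=B(\e,\kappa,0)$ we would exploit translation invariance. The profile equation \eqref{eq:tildeueqn} for $\tilde u_{\e,\kappa}$ is autonomous in $\xi$ (every operator appearing in it is a Fourier multiplier), so $\d_\xi\tilde u_{\e,\kappa}$ lies in the kernel of its linearization $B(\e,\kappa,0)$. Moreover the translate of $\tilde u_{\e,\kappa}$ is fixed in Proposition \ref{mainLS} — using the $SO(2)$-invariance of \eqref{std}, consistently with $\d_\e\tilde u_{0,\kappa}=\Upsilon_\a$, $\a\in\RR$ — so that $P\tilde u_{\e,\kappa}=\Upsilon_{a}$ with $a=a(\e,\kappa)\in\RR$ and $a=\e\a+\cO(\e^2)$, in particular $a\neq0$ for $\e\in(0,\e_0)$. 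Since $P$ and $\d_\xi$ are both Fourier multipliers they commute, so $P\d_\xi\tilde u_{\e,\kappa}=\d_\xi\Upsilon_a=\Upsilon_{ia}$; writing $\d_\xi\tilde u_{\e,\kappa}=\Upsilon_{\b^*}+(I-P)\d_\xi\tilde u_{\e,\kappa}$ we get $\b^*=i\,a(\e,\kappa)\in i\RR\setminus\{0\}$, \emph{purely imaginary}. By uniqueness of the implicit-function-theorem corrector (the construction of \eqref{eq:Tlambda}--\eqref{eq:dVdeps}), $(I-P)\d_\xi\tilde u_{\e,\kappa}=\cV(\e,\kappa,0,\b^*)$, and hence $\cE(\e,\kappa,0)\b^*=A\b^*+B\overline{\b^*}=0$. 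Since $\overline{\b^*}=-\b^*\neq0$, this yields $A(\e,\kappa,0)=B(\e,\kappa,0)$, and the elementary observation above then gives the vanishing of the second column of $R(\e,\kappa,0)$.

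It remains to propagate this to $\l\neq0$. The only $\l$-dependence in $\cE(\e,\kappa,\l)$ is the explicit $-\l$ in $B$ (Observation \ref{obs:keycommutator}), already isolated as $-\l I$, and the dependence of $\cV$ through $T_\l$, which is analytic in $\l$; furthermore, in the expansion producing \eqref{eq:coperiodic} the $\cO(\e)$ contribution vanishes identically (see \eqref{eq:PBTaylorOE3}) and the $\cO(\e^2)$ contribution is $2\e^2\a^2\Gamma+\cO(\e^2|\l|)$, so $\cE(\e,\kappa,\l)+\l I-2\e^2\a^2\Gamma$ depends on $\l$ only through contributions of size $\cO(\e^2|\l|)$. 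Combined with the vanishing at $\l=0$, this gives $R(\e,\kappa,\l)\bp0\\1\ep=R(\e,\kappa,0)\bp0\\1\ep+\cO(\e^2|\l|)=\cO(\e^2|\l|)$, which is exactly \eqref{eq:improvederror}; the first column is left untouched.

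The step we expect to be the main obstacle is the second one: identifying the translational eigenfunction $\d_\xi\tilde u_{\e,\kappa}$ with a vector lying \emph{exactly} — not merely to leading order — in the $\b_2$-direction of the Lyapunov--Schmidt coordinates. This uses both that the phase normalization in the existence analysis of \cite{WZ} persists to all orders in $\e$, and that the stability-side corrector $\cV$ is \emph{the} small solution of $(I-P)B(\e,\kappa,0)(\Upsilon_\b+\cV)=0$ in $(I-P)L^2_{per}$; with only the leading-order normalization $\d_\e\tilde u_{0,\kappa}=\Upsilon_\a$ one would recover nothing better than the a priori bound $R(\e,\kappa,0)\bp0\\1\ep=\cO(\e^3)$. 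By contrast, the $\l$-regularity invoked in the last step is routine, given the analyticity of $T_\l$ recorded just after \eqref{eq:Tlambda}.
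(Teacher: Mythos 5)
Your proof is correct, and it reaches the conclusion by a route that differs in execution from the paper's, though both exploit the same underlying $SO(2)$/translation symmetry and rest on the same two inputs: the all-orders normalization \eqref{reform} ($\tilde u_{\e,\kappa}=\e\Upsilon_\a+\e V$ with $\a\in\RR$, $PV=0$) and uniqueness of the implicit-function-theorem corrector. The paper proves the identity $\d_{\b_j}\cV(\e,\kappa,0,\b)=\d_{\a_j}V(\e,\kappa,\a)|_{\a_2=0}$, so that the reduced spectral matrix at $\l=0$ becomes the $\a$-Jacobian of the reduced existence equation \eqref{eq:LSreduced2}; the vanishing of the second column is then read off from the fact that that equation depends on $\a$ only through $\cA+\e^2\g|\a|^2+\cO(\e^3)$ acting on $\a$, whose $\a_2$-derivative is $\cO(\a_2)$. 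You instead insert the translational zero mode $\d_\xi\tilde u_{\e,\kappa}$ directly into the reduced system: its $P$-projection is $\Upsilon_{i\e\a}$ with $i\e\a$ purely imaginary and nonzero, its $(I-P)$-part coincides with $\cV(\e,\kappa,0,i\e\a)$ by uniqueness of the corrector, and the resulting relation $A\b^*+B\overline{\b^*}=0$ with $\overline{\b^*}=-\b^*$ forces $A=B$, i.e.\ a vanishing second column in real coordinates. These are two faces of the same symmetry (the $\a_2$-direction at real $\a_1$ is the phase rotation, i.e.\ the translation direction), but your version makes the geometric content more transparent and never needs the explicit form of the reduced existence equation, while the paper's version recovers the first column $2\e^2\a^2(\Re\g,\Im\g)^T+\cO(\e^3)$ as a consistency check along the way. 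Your propagation to $\l\neq0$ is also sound: since $PB(0,\kappa,0)=0$ by Observation \ref{obs:keycommutator} and $\d_\l\cV=\cO(\e)$, the $\l$-dependence of $\cE+\l I$ enters only at $\cO(\e^2)$, which is exactly the implicit step behind the paper's opening reduction ``it suffices to show the error at $\l=0$ takes the form $\cO(\e^3)$ in the first column and $0$ in the second.''
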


	Before we prove the lemma, we show how it implies the following theorem.

	\begin{theorem}\label{thm:copstab}[Coperiodic Stability]
		Let $\tilde{u}_{\e,\kappa}$ be the solutions in Proposition \ref{mainLS}.
		For some $\delta>0$,
		the spectrum of $B(\e,\kappa):=L(k,\mu)+d(\e,\kappa)k\d_\xi+D\cN(\tilde{u}_{\e,\kappa})$ admits the following decomposition for all $\e$ sufficiently small and $\kappa^2\leq q_E^2$ for $q_E$ defined in \eqref{eq:CGLExistence}.
		\begin{equation}
			\s(B(\e,\kappa) )=S\cup\{\l_1,\l_2 \}
		\end{equation}
		where
		\begin{equation} \label{l1}
			\l_1(\e,\kappa)=2\e^2\a^2\Re\g+\cO(\e^3)
		\end{equation}
		and
		\begin{equation}\label{l2}
			\l_2(\e,\kappa)=0
		\end{equation}
		and, for all $\l\in S$, 
		\begin{equation}\label{ldelta}
			\Re\l<-\delta.
		\end{equation}
	\end{theorem}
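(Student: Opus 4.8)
The plan is to read off the spectral decomposition directly from the Lyapunov--Schmidt reduction carried out above, using Lemma \ref{lem:corefined} to control the error. Recall that $W \in \ker B(\e,\kappa,\l)$ was decomposed as $W = \Upsilon_\b + \cV(\e,\kappa,\l,\b)$ with $P\cV = 0$, that the $(I-P)$-component was solved uniquely via $T_\l$ and the inverse function theorem, and that the remaining equation $PB(\e,\kappa,\l)W = 0$ was expanded in $\e$. The $\cO(1)$ and $\cO(\e)$ terms were shown to vanish identically, so after dividing by $\e^2$ the reduced equation takes the form \eqref{eq:coperiodic}, namely
\begin{equation*}
	\Big(-\l\, \Id + 2\e^2\a^2 \begin{pmatrix} \Re\g & 0 \\ \Im\g & 0 \end{pmatrix} + E(\e,\kappa,\l)\Big)\begin{pmatrix}\b_1\\ \b_2\end{pmatrix} = 0,
\end{equation*}
where, by Lemma \ref{lem:corefined}, the error matrix $E$ has first column $\cO(\e^3,\e^2|\l|)$ and second column $\cO(\e^2|\l|)$. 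First I would record the qualitative consequence: for $\e,\l$ small the operator $(I-P)B(\e,\kappa,\l)(I-P)$ remains boundedly invertible (it is a small perturbation of $B(0,\kappa,\l)(I-P)$, which is invertible for $\l$ small by the spectral gap in Hypotheses (H2)--(H3)), so $\l \in \spec(B(\e,\kappa))$ with $|\l|$ small if and only if the $2\times 2$ matrix above is singular; all other spectrum lies in $\Re\l < -\delta$, giving the set $S$ and \eqref{ldelta}.

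Next I would solve the $2\times 2$ characteristic equation. Writing the matrix as $M(\e,\kappa,\l) = -\l \Id + 2\e^2\a^2 R + E$ with $R = \begin{pmatrix}\Re\g & 0\\ \Im\g & 0\end{pmatrix}$, the determinant is
\begin{equation*}
	\det M = \l^2 - 2\e^2\a^2\Re\g\,\l + (\text{error terms}),
\end{equation*}
and the key structural point is that the second column of $R$ is zero and, by Lemma \ref{lem:corefined}, the second column of $E$ is $\cO(\e^2|\l|)$ with no $\cO(\e^3)$ piece. Hence the constant term (in $\l$) of $\det M$ vanishes: $\det M\big|_{\l=0} = 0$ exactly, reflecting the translational zero eigenfunction $\d_\xi \bar U^{\e,\kappa}$. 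So $\l_2 = 0$ is an exact root, giving \eqref{l2}. Factoring it out, the remaining root satisfies $\l = 2\e^2\a^2\Re\g + \cO(\e^3)$ — here one uses that dividing the residual equation by $\l$ is legitimate near the nonzero root since $2\e^2\a^2\Re\g \neq 0$ (as $\Re\g < 0$, $\a \neq 0$ on the existence range $\kappa^2 < \kappa_E^2$), and that the $\cO(\e^2|\l|)$ entries contribute $\cO(\e^2)\cdot\cO(\e^2) = \cO(\e^4)$ corrections while the $\cO(\e^3)$ entries of the first column give the stated $\cO(\e^3)$ remainder. This yields \eqref{l1}.

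The main obstacle is Lemma \ref{lem:corefined} itself — the refined error estimate — and in particular establishing that the second column of the error matrix carries no $\cO(\e^3)$ term but only $\cO(\e^2|\l|)$. This is exactly the statement that $\l = 0$ remains an \emph{exact} eigenvalue at every $\e$, which is forced by translation invariance of \eqref{std}: $\d_\xi \tilde u_{\e,\kappa}$ lies in $\ker B(\e,\kappa,0)$, so the reduced $2\times2$ system must be singular at $\l = 0$ for all $\e$, pinning the corresponding column of the full (pre-division) reduced map to vanish at $\l=0$. I would prove the lemma by tracking which coordinate in $(\b_1,\b_2)$ corresponds, at leading order, to the translational direction $\d_\xi\Upsilon_\a = \Upsilon_{i\a}$ — i.e. to $\b$ purely imaginary — and showing that the $\b_2$-column of the reduced operator is proportional to $\l$ exactly, by invoking Observation \ref{obs:keycommutator} ($PB(0,\kappa,\l) = -\l P$) together with the fact that the $\cO(\e)$ and $\cO(\e^2)$ contributions to this column, assembled from \eqref{eq:PBTaylorOE3} and \eqref{eq:oeps2}, are precisely those that vanish when $\b$ is the translational mode. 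Once Lemma \ref{lem:corefined} is in hand, the eigenvalue extraction above is routine.
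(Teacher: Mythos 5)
Your proof of the theorem itself is correct and follows essentially the same route as the paper: set the determinant of the reduced $2\times 2$ system \eqref{eq:coperiodic} to zero, use the columnwise structure of Lemma \ref{lem:corefined} to see that the entire second column vanishes at $\l=0$ so that $-\l$ factors out exactly (giving $\l_2\equiv 0$), and then read off the other root as $2\e^2\a^2\Re\g+\cO(\e^3)$ since the off-diagonal product contributes only $\cO(\e^4)$. The one place where you genuinely diverge is in how you would establish the key Lemma \ref{lem:corefined}: you propose to pin the $\b_2$-column by translation invariance, noting that $\d_\xi\tilde{u}_{\e,\kappa}$ is an exact element of $\ker B(\e,\kappa,0)$ and corresponds precisely to $\b$ purely imaginary (this does work, since $\d_\xi\Upsilon_\a=\Upsilon_{i\a}$ and $P\d_\xi V=\d_\xi PV=0$ because $P$ is a Fourier multiplier, so the translational kernel element is exactly $(\b_1,\b_2)=(0,\a)$ and the second column of the reduced matrix at $\l=0$ must vanish identically, not merely make the determinant vanish). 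The paper instead proves the lemma by identifying $\d_{\b_j}\cV$ with $\d_{\a_j}V$ from the existence analysis and differentiating the reduced existence equation \eqref{eq:LSreduced2} in $\a_1,\a_2$; that route simultaneously delivers the $\cO(\e^3)$ refinement of the \emph{first} column (via $\cA+\e^2\g\a_1^2=\cO(\e^3)$), which your translation-invariance argument does not address and which you would still need to extract from the comparison of \eqref{eq:oeps2} with the existence reduction. So your argument is sound but, as a replacement for the lemma's proof, covers only the second column; the two approaches are complementary rather than interchangeable.
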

	
	\begin{remark}
		Formulae \eqref{l1}--\eqref{ldelta} are consistent, as they must be, with 
		the corresponding formulae obtained in \cite[Prop. 3.2]{SZJV} for an illustrative model
		(the Brusselator equation) in the reaction diffusion case.
		Both describe transcritical bifurcations with $SO(2)$ symmetry, the latter with an additional
		reflective symmetry making it an $O(2)$ bifurcation as well.
	\end{remark}

	\begin{proof}[Proof of Theorem \eqref{thm:copstab}]
		We set the determinant of \eqref{eq:coperiodic} equal to zero, and by the refined error estimate in lemma\eqref{lem:corefined}, we find that
		\begin{equation}\label{eq:detrefinedcop}
		\begin{split}
			\det\bp -\l+2\e^2\a^2\Re\g+\cO(\e^3,\e^2|\l|) & \cO(\e^2|\l|)\\ 2\e^2\a^2\Im\g+\cO(\e^3,\e^2|\l|) & -\l+\cO(\e^2|\l|)\ep\\=(-\l+2\e^2\a^2\Re\g+\cO(\e^3,\e^2|\l|))(-\l+\cO(\e^2|\l|))-(\cO(\e^2|\l|))(2\e^2\a^2\Im\g+\cO(\e^3,\e^2|\l|))=0.
		\end{split}
		\end{equation}
		Now observe that we can factor out $-\l$ out of the above, to get
		\begin{equation}
			-\l\left((1-\cO(\e^2))(-\l+2\e^2\a^2\Re\g+\cO(\e^3,\e^2|\l|))+\cO(\e^4) \right)=0.
		\end{equation}
	\end{proof}
	\begin{proof}[Proof of Lemma \eqref{lem:corefined}]
		It suffices to show that the error at $\l=0$ takes the form
		\begin{equation}
			\bp \cO(\e^3) & 0\\
				\cO(\e^3) & 0 \ep.
		\end{equation}
		In the existence analysis of \cite{WZ}, in the course of showing \eqref{soln}, there was
		established in fact the more detailed expansion
		\be\label{reform}
		\hbox{\rm $\tilde{u}_{\e,\kappa}=\e\Upsilon_\a+\e V(\e,\kappa,\a)$,  where $PV=0$.} 
		\ee
		of which we shall make use now, where $\Upsilon_\a$ is as defined in \eqref{eq:upsilondef}.
		Note that formulation \eqref{reform} represents a slight shift in notation relative to \cite{WZ}.
		There, we wrote $V=\e\Upsilon_\a$ and $U=V+W+X$, where $\supp(\hat{W} )\subset\{\pm1 \}$ and $\hat{X}(1)=0$. Here, we've relabeled and our new $V$ is $W+X$ and the old $V$ is now $\Upsilon_a$. 
		The reason for this notational shift is to relate the quantities in the existence analysis
		with the corresponding quantities in our stability analysis, through the following claim.
		\begin{claim}
			\begin{equation}
				\d_{\b_j}\cV(\e,\kappa,0,\beta)=\d_{\a_j}V(\e,\kappa,\a)|_{\a_2=0} 
			\end{equation}
			for $j=1,2$.
		\end{claim}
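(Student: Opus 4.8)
The plan is to recognise that the co‑periodic Bloch operator at $\l=0$, namely $B(\e,\kappa,0)=L(k,\mu)+d(\e,\kappa)k\d_\xi+D\cN(\tilde{u}_{\e,\kappa})$, is exactly the linearization of the existence equation \eqref{eq:tildeueqn} about the bifurcating profile $\tilde{u}_{\e,\kappa}$. Once this is in hand, $\d_{\b_j}\cV(\e,\kappa,0,\b)$ and $\d_{\a_j}V(\e,\kappa,\a)|_{\a_2=0}$ will be seen to solve one and the same linear equation in the complement of the kernel, and uniqueness finishes the proof.

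First I would write down the two defining relations. By construction of $\cV$ (the $(I-P)$‑component of $B(\e,\kappa,\l)W=0$), the function $\cV(\e,\kappa,0,\b)$ satisfies $(I-P)B(\e,\kappa,0)\bigl(\Upsilon_\b+\cV(\e,\kappa,0,\b)\bigr)=0$ with $P\cV=0$; recall that $\cV$ is linear in $\b$, so $\d_{\b_j}\cV(\e,\kappa,0,\b)$ is independent of $\b$. On the existence side, write the profile equation \eqref{eq:tildeueqn} as $\cG(\tilde{u}_{\e,\kappa})=0$ for the map $\cG(u):=L(k,\mu)u+d(\e,\kappa)k\d_\xi u+\cN(u)$, $\cN(u)=\cQ(u,u)+\cC(u,u,u)+\cdots$; by \eqref{reform} one has $\tilde{u}_{\e,\kappa}=\e\Upsilon_\a+\e V(\e,\kappa,\a)$ with $PV=0$, and the $(I-P)$‑component of $\cG(\tilde{u}_{\e,\kappa})=0$ is precisely the equation that defines $V$ via the implicit function theorem. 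The crux is the algebraic identity $D\cG(\tilde{u}_{\e,\kappa})=L(k,\mu)+d(\e,\kappa)k\d_\xi+D\cN(\tilde{u}_{\e,\kappa})=B(\e,\kappa,0)$.

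Next I would differentiate both relations. Since $\Upsilon_\b=\Re(\b e^{i\xi}r)$ is $\RR$‑linear in $(\b_1,\b_2)$, writing $e_1:=1$, $e_2:=i$ we get $\d_{\b_j}\Upsilon_\b=\Upsilon_{e_j}$ and likewise $\d_{\a_j}\Upsilon_\a=\Upsilon_{e_j}$. Differentiating the stability relation in $\b_j$ gives
\[ (I-P)B(\e,\kappa,0)\bigl(\Upsilon_{e_j}+\d_{\b_j}\cV(\e,\kappa,0,\b)\bigr)=0,\qquad P\,\d_{\b_j}\cV=0, \]
while differentiating $(I-P)\cG(\tilde{u}_{\e,\kappa})=0$ in $\a_j$, using $\d_{\a_j}\tilde{u}_{\e,\kappa}=\e\Upsilon_{e_j}+\e\,\d_{\a_j}V$ and dividing by $\e$, gives
\[ (I-P)B(\e,\kappa,0)\bigl(\Upsilon_{e_j}+\d_{\a_j}V(\e,\kappa,\a)\bigr)=0,\qquad P\,\d_{\a_j}V=0. \]
Restricting to $\a_2=0$, so that the profile $\tilde{u}_{\e,\kappa}$ inside $D\cN$ is the real‑$\a$ branch solution that enters $B(\e,\kappa,0)$, the two displayed equations share the same operator and the same inhomogeneity $-(I-P)B(\e,\kappa,0)\Upsilon_{e_j}$, and both unknowns lie in $\ran(I-P)$. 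Since $(I-P)B(\e,\kappa,0)(I-P)=T_0^{-1}$ is invertible for $\e$ small, the equation $(I-P)B(\e,\kappa,0)(I-P)X=-(I-P)B(\e,\kappa,0)\Upsilon_{e_j}$ has a unique solution, whence $\d_{\b_j}\cV(\e,\kappa,0,\b)=\d_{\a_j}V(\e,\kappa,\a)|_{\a_2=0}$.

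The step I expect to be the main obstacle is not analytic but a matter of careful bookkeeping around the wave speed $d$: one must make sure that $\d_{\a_j}$ on the existence side is taken holding $d$ at the value $d(\e,\kappa)$ that appears in the Bloch operator \eqref{eq:BlochOperator}, exactly as in the Lyapunov--Schmidt construction of \cite{WZ}, where $V$ is obtained from the $(I-P)$‑equation with $(\e,\kappa,\a,d)$ treated as independent parameters. With that convention no extraneous term $(\d_{\a_j}d)\,k\,\d_\xi\tilde{u}_{\e,\kappa}$ is produced when differentiating \eqref{eq:tildeueqn}, and the identification of operators and inhomogeneities above is exact; otherwise one would in addition have to verify that such a term is annihilated by $(I-P)$ to the relevant order in $\e$, which is true but slightly more delicate.
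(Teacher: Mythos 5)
Your proposal is correct and follows essentially the same route as the paper: both identify $B(\e,\kappa,0)$ as the linearization of the existence equation at $\tilde u_{\e,\kappa}$, differentiate the two $(I-P)$-relations in $\b_j$ and $\a_j$ respectively (dividing out the factor of $\e$ on the existence side), and conclude by uniqueness from the invertibility of $(I-P)B(\e,\kappa,0)(I-P)$. Your closing remark about holding $d$ fixed at $d(\e,\kappa)$ when taking $\d_{\a_j}$ is a worthwhile bookkeeping point that the paper leaves implicit, but it does not change the argument.
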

		Before proving the claim, note that since $\cV$ is linear in $\b$ and so the right-hand side of the equality in the claim is constant in $\b$. Moreover, since $\a$ is determined by the parameters $\e,\kappa$; it follows that the two sides actually only depend on $(\e,\kappa)$. We begin by comparing the equations
		that $V$ and $\cV$ satisfy, respectively:
		\begin{equation}\label{eq:system}
		\begin{aligned}
			&(I-P)\left(L(k,\mu)+d(k,\mu)k\d_\xi+D\cN(\tilde{u}_{\e,\kappa}) \right) (\Upsilon_\b+\cV)=0,\\
			&(I-P)\left(L(k,\mu)(\e \Upsilon_\a+\e V)+d(k,\mu)k\d_\xi(\e\Upsilon_\a+\e V)+\cN((\e\Upsilon_\a+\e V)) \right)=0.
		\end{aligned}
		\end{equation}

		Differentiating the first equation of \eqref{eq:system} with respect to $\b_1$ and the second with respect to $\a_1$ and then evaluating at $\a_2=0$ gives
		\begin{equation}\label{eq:dsystem}
		\begin{aligned}
			&(I-P)\left(L(k,\mu)+d(k,\mu)k\d_\xi+D\cN(\tilde{u}_{\e,\kappa}) \right) (\Upsilon_1+\d_{\b_1}\cV)=0,\\
			&(I-P)\left(L(k,\mu)+d(k,\mu)k\d_\xi+D\cN(\tilde{u}_{\e,\kappa}) \right)(\Upsilon_1+\d_{\a_1}V|_{\a_2=0})=0.
		\end{aligned}
		\end{equation}
		By the uniqueness statement of the inverse function theorem and a similar argument for the $\b_2$ and $\a_2$ derivatives, the claim follows. 
		
		We now look at the other equation from the existence problem:
		\begin{equation}\label{eq:existencePeqn}
			P(L(k,\mu)+d(k,\mu)k\d_\xi)(\e \Upsilon_\a+\e V)+P\cN(\e \Upsilon_\a+\e V)=0,
		\end{equation} 
		and differentiate with respect to $\a_1$ and evaluate at $\a_2=0$ to get
		\begin{equation}
			PB(\e,\kappa,0)(\Upsilon_1+\d_{\b_1}\cV(\e,\kappa,0,\b))=0
		\end{equation}
		by the claim and a similar equation for the $\a_2$ derivative. 
	
This means that we can get the coefficients of the reduced spectral equation at $\l=0$ by 
differentiating the reduced equation from the existence problem with respect to $\a_1$ and $\a_2$, 
		then setting $\a_2=0$ as the amplitude was taken to be real. 
		We recall that \eqref{eq:existencePeqn} reduced to \eqref{eq:LSreduced}: 
		\begin{equation*}
			\left(\tl_k(k_*,0)\kappa+\frac{1}{2}\tl_{kk}(k_*,0)\kappa^2+\tl_\mu(k_*,0)\mu+i(d_*\kappa+\delta k_*+\delta\kappa) \right)\a+n(|\a|^2;\mu,k,d)\a+\cO(\e^3\a)=0,
		\end{equation*}
	and that $n(|\a|^2;\mu,k,d)=\e^2\g|\a|^2+\cO(|\a|^4)$. 
		
	Denoting $z=x+iy\in\CC$ we let $[[z]]\in M_2(\RR)$ be the matrix
		\begin{equation}
			[[z]]=\bp x & -y \\ y & x\ep,
		\end{equation}
		so that \eqref{eq:LSreduced} takes the form
		\begin{equation}\label{eq:LSreduced2}
			[[\tl_k(k_*,0)\kappa+\frac{1}{2}\tl_{kk}(k_*,0)\kappa^2+\tl_\mu(k_*,0)\mu+i(d_*\kappa+\delta k_*+\delta\kappa)+\e^2\g(\a_1^2+\a_2^2)+\cO(\e^3) ]]\bp \a_1 \\ \a_2\ep=0.
		\end{equation}

		Let $\cA(\e,\kappa)$ be the complex scalar
		\begin{equation}\label{eq:mathcalA}
			\cA(\e,\kappa)=\tl_k(k_*,0)\kappa+\frac{1}{2}\tl_{kk}(k_*,0)\kappa^2+\tl_\mu(k_*,0)\mu+i(d_*\kappa+\delta k_*+\delta\kappa).
		\end{equation}
		We differentiate \eqref{eq:LSreduced2} with respect to $\a_1$ and then evaluate at $\a_2=0$ to get
		\begin{equation}
			\bp \Re\cA+3\e^2\Re\g\a_1^2+\cO(\e^3)\\
			\Im\cA+3\e^2\Im\g\a_1^2+\cO(\e^3) \ep.
		\end{equation}

		Note that the $\a_2$ derivative of \eqref{eq:LSreduced2} vanishes at $\a_2=0$ because if we call the matrix $M(\a;\e,\kappa)$ then we note $\d_{\a_2}M(\a;\e,\kappa)=\cO(\a_2)$ by applying the chain rule to $n(|\a|^2;\e,\kappa)$ and noting that $\d_{\a_2}|\a|^2=\cO(\a_2)$. On the other hand, $M(\a_1;\e,\kappa)=0$ by choice of $\a_1$ and the form of $M$. To finish the argument, we use the observation that $\cA+\e^2\g\a_1^2=\cO(\e^3)$.
	\end{proof}

	\begin{remarks}
		1. Notice that in the proof, we never used the explicit form of $L(k,\mu)$; only the symbol appeared. This suggests that in the case of pseudodifferential operators, one would still have spectral stability; at least in the coperiodic case.

		2. In $O(2)$ invariant systems, we get the stronger error estimate
		$
			\bp \cO(\e^3,\e^2|\l|) & \cO(\e^2|\l|)\\
			\cO(\e^2|\l|) & \cO(\e^2|\l|) \ep
			$
since the reduced equation in Lyapunov-Schmidt is a real equation. Note, however, that even the $O(2)$ invariant estimate is still weaker than the one found by explicit computation for the Brusselator model in \cite[Prop. 3.1]{SZJV}.
	\end{remarks}


	\section{General stability}\label{sec:gencase}
	We now turn to the rigorous stability analysis with respect to general perturbations, that is,
determination of	
	the spectrum of $\cL^{\eps, \kappa}$ considered as an operator on the whole line.
	Accordingly, we define a full Bloch-type operator depending additionally on Floquet number $\s$ as
	\begin{equation}\label{eq:BlochDef}
		B(\e,\kappa,\l,\s):=L(k,\mu;\s)+dk\d_{\xi}+i\s\left(kd_*+\frac{kk_*d_\e(0,\kappa)}{\kappa}\right)
		+D\cN(\tilde{u}_{\e,\kappa})-\l,
	\end{equation}
	where
	\begin{equation}
		L(k,\mu;\s)U(\xi)=\sum_{\eta\in\ZZ}S(k(\eta+\s),\mu)\hat{U}(\eta)e^{i\eta\xi}.
	\end{equation}

		This is not the usual Bloch operator, which is given by
		\begin{equation*}
			\tilde{B}(\e,\kappa,\l,\s)=L(k,\mu;\s)+dk\d_\xi+i\s dk+D\cN(\tilde{u}_{\e,\kappa})-\l.
		\end{equation*}
		Notice that the only difference between the two operators is that $B$ and $\tilde{B}$ have different constants multiplying $i\s$, which evidently does not change stability properties, but only shifts
		spectral curves $\lambda(\sigma)$ in imaginary direction.  
		This change is made so that the spectral curves obtained from the ultimate reduced 
		equations obtained by Lyapunov-Schmidt reduction will match those obtained
		through the treatment of linearized complex Ginzburg-Landau stability in Section
		\ref {sec:cglstability}.
See remark \eqref{rem:BlochDiff} for further details about how this change in constants affects the reduced equation.

	As in the coperiodic case, we will define the following operators that arise from taking $\e$ and $\s$ derivatives of $B$.
	\begin{align}\label{eq:derivop2}
		L_k(k,\mu;\s)U(\xi)&=\sum_{\eta\in\ZZ} S_k(k(\eta+\s),\mu)\hat{U}(\eta)e^{i\eta\xi}\\
		L_{kk}(k,\mu;\s)U(\xi)&=\sum_{\eta\in\ZZ} S_{kk}(k(\eta+\s),\mu)\hat{U}(\eta)e^{i\eta\xi}\\
		L_\mu(k,\mu;\s)U(\xi)&=\sum_{\eta\in\ZZ} S_\mu(k(\eta+\s),\mu)\hat{U}(\eta)e^{i\eta\xi}\\
	\end{align}
	and observe that $L_\s(k,\mu;\s)=kL_k(k,\mu;\s)$ via the chain rule.

	Note that this new Bloch operator agrees with the operator found in the analysis of the
	Brusselator model in \cite{SZJV}, or more generally in any nonconvective Turing bifurcation where $d_\e(\e,\kappa)\equiv0$. But this is in general an affine shift of the usual Bloch parameters which preserves the real parts. In convective Turing bifurcations, the usual Bloch variables produce spectral curves that do not always have the same imaginary part compared to those found in the complex Ginzburg-Landau.

	Before doing the computations, we explain the form of the Bloch operator provided in \eqref{eq:BlochDef}. To start, let $\xi_0:=kx$, $\xi_p:=k(x-dt)$ and 
	\begin{equation*}
	\xi_g:=\frac{k}{\kappa}\left(\frac{\d\xi_p}{\d\e}\right)|_{\e=0}=\frac{k}{\kappa}(\kappa x-(\kappa d(0,\kappa)+k_*d_\e(0,\kappa))t)=\frac{k}{\e}\Hx.
	\end{equation*}
	Note that the second derivative of $\xi_p$ with respect to $\e$ is proportional to $\Omega t$ and hence is not proportional to $\kappa$ in general. Hence it is important in the definition of $\xi_g$ that $\frac{\d\xi_p}{\d\e}$ is evaluated at $\e=0$ because otherwise, we are shifting the spectrum by arbitrarily large pure imaginary shifts as $\kappa\rightarrow 0$.

	Writing $\tilde{u}_{\e,\kappa}(\xi_0,t)$, we observe that it solves
	\begin{equation}
		\frac{\d\tilde{u}_{\e,\kappa}}{\d t}=L(k,\mu)\tilde{u}_{\e,\kappa}+\cN(\tilde{u}_{\e,\kappa}),
	\end{equation}
	where $L(k,\mu)$ only acts on the $\xi_0$ variable. Linearizing about $\tilde{u}_{\e,\kappa}$, we find that perturbations solve the equation
	\begin{equation}\label{eq:lineqnlab}
		\frac{\d v}{\d t}=L(k,\mu)v+D\cN(\tilde{u}_{\e,\kappa})v.
	\end{equation}
	We then write our perturbation as
	\begin{equation}\label{eq:pert}
		v(\xi_0,t)=e^{i\s\xi_g+\l t}W(\xi_p),
	\end{equation}
	where $W$ is $2\pi$-periodic in $\xi_p$. 
	Plugging \eqref{eq:pert} into \eqref{eq:lineqnlab}, we find that $W$ is an eigenfunction of the Bloch operator defined in \eqref{eq:BlochDef}. From now on, we will drop the subscript on $\xi_p$ because $\xi_0$ and $\xi_g$ will not be needed again. 
	Since we are working on one period $\xi\in [0,2\pi)$,
	we may take without loss of generality $|\s|<\frac{1}{2}$. 

	We begin solving $B(\e,\kappa,\l,\s)W=0$ by splitting $W=\Upsilon_\b+\cV$ for some $\b\in\CC$ where $P\cV=0$.
	\begin{proposition}\label{prop:CVExist}
		Consider the equation
		\begin{equation}\label{eq:IPEqn}
			(I-P)B(\e,\kappa,\l,\s)W=0.
		\end{equation}
		Then there is a unique smooth function $\cV=\cV(\e,\kappa,\l,\s,\b)$, defined for small $\e,\l$, every $|\s|<\frac{1}{2}$, and all $\b\in\CC$. Moreover, $\cV$ is linear in $\b$ and satisfies 
		\begin{enumerate}
			\item $\cV(0,\kappa,\l,0,\b)\equiv0$
			\item $\cV_\e(0,\kappa,\l,0,\b)=-T_\l (I-P)B_\e(0,\kappa,\l,0)\Upsilon_\b$
			\item $\cV_\s(0,\kappa,\l,0,\b)=-T_\l (I-P)B_\s(0,\kappa,\l,0)\Upsilon_\b$
		\end{enumerate}
		where $T_\l$ is the linear operator defined in \eqref{eq:Tlambda}.
	\end{proposition}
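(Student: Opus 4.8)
The plan is to run the same Lyapunov--Schmidt step as in the coperiodic case (Section~\ref{sec:coper}), now carrying the extra Floquet parameter $\s$. The essential observation is that $B(\e,\kappa,\l,\s)$ is a \emph{linear} operator in $W$ --- the term $D\cN(\tilde u_{\e,\kappa})$ is the Fréchet derivative of $\cN$ at the fixed function $\tilde u_{\e,\kappa}$, hence linear --- so, writing $W=\Upsilon_\b+\cV$ with $P\cV=0$, equation \eqref{eq:IPEqn} is simply the linear problem
\begin{equation*}
	(I-P)B(\e,\kappa,\l,\s)(I-P)\cV=-(I-P)B(\e,\kappa,\l,\s)\Upsilon_\b .
\end{equation*}
Granting that $(I-P)B(\e,\kappa,\l,\s)(I-P)$ is boundedly invertible on $(I-P)L^2_{per}$, one sets
\begin{equation*}
	\cV=-\big[(I-P)B(\e,\kappa,\l,\s)(I-P)\big]^{-1}(I-P)B(\e,\kappa,\l,\s)\Upsilon_\b ,
\end{equation*}
which is manifestly $\b$-linear (since $\Upsilon_\b$ is), smooth on the region where the inverse exists, and analytic in $\l$ since the family is affine in $\l$ (local Neumann series); uniqueness is also immediate, since any solution of \eqref{eq:IPEqn} with $P\cV=0$ is forced to coincide with the displayed formula. (One could equally invoke the implicit function theorem as in Section~\ref{sec:coper}, patching the local solutions over $|\s|<\tfrac12$ by uniqueness.)

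So the real content --- and the main obstacle --- is invertibility of $(I-P)B(\e,\kappa,\l,\s)(I-P)$ for $\e,\l$ small and \emph{all} $|\s|<\tfrac12$. At $\e=0$ this operator (modulo the $-\l$) is the Fourier multiplier with symbol $S\big(k_*(\eta+\s),0\big)+i\eta k_*d_*+i\s\big(k_*d_*+k_*^2 d_\e(0,\kappa)/\kappa\big)$ on mode $\eta$. For $|\s|$ bounded away from $0$, Hypotheses (H1)--(H3) provide a uniform spectral gap on every Fourier mode (together with the usual high-frequency control on the symbol $S$ noted in the remarks following \eqref{eq:derivop1}), so the multiplier is boundedly invertible once $|\l|$ is small. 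The only delicate regime is $\s\to0$ on the modes $\eta=\pm1$: there $S(k_*,0)+ik_*d_*$ has $0$ as a simple eigenvalue with eigenvector $r$ (this is exactly what makes $\ker\big(L(k_*,0)+k_*d_*\d_\xi\big)$ two-dimensional in Section~\ref{sec:coper}), and $P$ restricted to mode $\pm1$ removes precisely this one neutral direction. Hence $(I-P)B(0,\kappa,\l,\s)(I-P)$ on modes $\pm1$ is the compression of $S\big(k_*(\pm1+\s),0\big)+ik_*d_*+i\s(\cdots)-\l$ to the complement of $r$ (resp.\ $\bar r$), whose spectrum stays near $\{\tl_j(k_*,0)+ik_*d_*:\,j\ge2\}\subset\{z\in\CC:\Re z<0\}$ --- the dangerous eigenvalue has been projected out. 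Thus the $\e=0$ operator is uniformly invertible over $|\s|<\tfrac12$, and small perturbation in $(\e,\l)$ preserves this; at $\s=0$ this is precisely the invertibility already used to define $T_\l$ in \eqref{eq:Tlambda}.

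With $\cV$ in hand, properties (1)--(3) follow as in Section~\ref{sec:coper}. For (1): at $(\e,\s)=(0,0)$ one has $B(0,\kappa,\l,0)=L(k_*,0)+k_*d_*\d_\xi-\l$ (using that $\cN$ is of quadratic order, so $D\cN(0)=0$), whence $(I-P)B(0,\kappa,\l,0)\Upsilon_\b=-\l(I-P)\Upsilon_\b=0$ because $\Upsilon_\b\in\ker\big(L(k_*,0)+k_*d_*\d_\xi\big)\subset\Range P$; by uniqueness $\cV(0,\kappa,\l,0,\b)\equiv0$. For (2) and (3) I would differentiate the identity $(I-P)B(\e,\kappa,\l,\s)\big(\Upsilon_\b+\cV(\e,\kappa,\l,\s,\b)\big)=0$ with respect to $\e$, respectively $\s$, and evaluate at $(\e,\s)=(0,0)$; using (1) to kill the term involving $\cV$ at the base point, and $P\cV\equiv0$ (hence $P\cV_\e=P\cV_\s=0$, so $(I-P)B(0,\kappa,\l,0)\cV_\e=T_\l^{-1}\cV_\e$ and likewise for $\cV_\s$), one obtains
\begin{equation*}
	(I-P)B_\e(0,\kappa,\l,0)\Upsilon_\b+T_\l^{-1}\cV_\e(0,\kappa,\l,0,\b)=0
\end{equation*}
and the analogue with $\s$, which rearrange to exactly (2) and (3). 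The smoothness needed to differentiate comes from the explicit formula for $\cV$ above (or from the implicit function theorem).
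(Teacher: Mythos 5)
Your proposal is correct and follows essentially the same route as the paper: rewrite \eqref{eq:IPEqn} as $(I-P)B(\e,\kappa,\l,\s)(I-P)\cV=-(I-P)B(\e,\kappa,\l,\s)\Upsilon_\b$, invert $(I-P)B(I-P)$ by perturbation from the base case, and obtain (1)--(3) by evaluating and differentiating this identity at $(\e,\s)=(0,0)$. In fact you supply more detail than the paper does on the one genuinely delicate point --- uniform invertibility of $(I-P)B(\e,\kappa,\l,\s)(I-P)$ over all $|\s|<\tfrac12$ rather than just $\s$ small, via the spectral gap from (H1)--(H3) away from $\s=0$ and the projection of the critical direction near $\s=0$ --- which the paper's proof only asserts for small $\s$ and otherwise defers to the coperiodic case.
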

	\begin{proof}
		Note that \eqref{eq:IPEqn} is equivalent to
		\begin{equation}\label{eq:IPEqn2}
			(I-P)B(\e,\kappa,\l,\s)(I-P)\cV=-(I-P)B(\e,\kappa,\l,\s)\Upsilon_\b.
		\end{equation}
		To get the existence of $\cV$, it suffices to show that $(I-P)B(\e,\kappa,\l,\s)(I-P)$ is invertible for $\e,\l$ small and $|\s|<\frac{1}{2}$. We have that $B(0,\kappa,0,0)=L(k_*,0;0)+id_*k_*\d_\xi$ and $P$ is the projection onto the kernel of this operator, hence we get invertibility for $\e,\l,\s$ small.
		The desired properties follow from similar calculations to the one done in the coperiodic case, and so the details will be omitted.
	\end{proof}

	We reduce to small $\s$ when $\e$ is small, which will allow us to use Taylor expansion arguments safely.

	\begin{proposition}\label{prop:smallsigma}
		To show stability or instability for all $|\s|\leq\frac{1}{2}$, it suffices to show the corresponding property for $|\s|\ll 1$.
	\end{proposition}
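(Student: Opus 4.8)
The plan is to show that once $|\s|$ is bounded away from $0$, the operator $B(\e,\kappa,\l,\s)$ of \eqref{eq:BlochDef} has no zeros in $\l$ in a fixed neighbourhood of $\{\Re\l\ge 0\}$, uniformly in $\e$, $\kappa$ and $\s$; then for a suitable $\delta>0$ the conditions \eqref{specstab}--\eqref{diffstab} hold automatically on $\delta\le|\s|\le\tfrac{1}{2}$, so only the window $|\s|<\delta$ can ever decide stability or instability, which is exactly the assertion.

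First I would record the structure at $\e=0$. By \eqref{eq:BlochDef}, $B(0,\kappa,\l,\s)=L(k_*,0;\s)+ik_*d_*\d_\xi+i\s k_*(d_*+d_\e(0,\kappa))-\l$ is constant-coefficient, hence Fourier-diagonal on $L^2_{per}([0,2\pi))$, so the set of $\l$ at which it fails to be invertible is
\[
\{\,\nu+ik_*d_*\eta+i\s k_*(d_*+d_\e(0,\kappa))\ :\ \eta\in\ZZ,\ \nu\in\spec S(k_*(\eta+\s),0)\,\},
\]
whose real parts are precisely the $\Re\nu$. For $\delta\le|\s|\le\tfrac{1}{2}$ and any $\eta\in\ZZ$ one has $k_*(\eta+\s)\ne\pm k_*$, since $\eta+\s=\pm1$ has no integer solution $\eta$ when $0<|\s|\le\tfrac{1}{2}$; hence (H3) forces every $\nu\in\spec S(k_*(\eta+\s),0)$ to have strictly negative real part. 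Compactness in $\s$ and in the finitely many low modes, together with the standing structural hypotheses governing $\Re\nu$ as $|k|\to\infty$ (parabolicity in the reaction--diffusion and quasilinear settings; the explicit dissipativity assumptions of Section~\ref{sec:other} for the nonlocal and semilinear hyperbolic extensions), upgrades this to a uniform bound: the zero set of $B(0,\kappa,\cdot,\s)$ lies in $\{\Re\l\le-\theta(\delta)\}$ for some $\theta(\delta)>0$ and all $\kappa$ in the range \eqref{krange}.

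Next I would pass to small $\e>0$ by perturbation, writing $B(\e,\kappa,\l,\s)=B(0,\kappa,\l,\s)+R(\e,\kappa,\s)$, where $R$ collects the $\e$-dependence coming from evaluating the symbol at $(k,\mu)=(k_*+\e\kappa,\e^2)$ and from $d(\e,\kappa)$. Because $B(0,\kappa,\l,\s)$ has compact resolvent and $R(\e,\kappa,\s)$ is an operator of order $\le m$ whose coefficients are $O(\e)$ relative to the top-order part of $B(0,\kappa,\l,\s)$, a relatively-bounded-perturbation estimate on the resolvent shows that $B(\e,\kappa,\l,\s)$ stays boundedly invertible throughout $\{\Re\l\ge-\tfrac{1}{2}\theta(\delta)\}$ once $0\le\e\le\e_0(\delta)$, uniformly for $\delta\le|\s|\le\tfrac{1}{2}$ and $\kappa$ in range. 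Equivalently, the zero set of $B(\e,\kappa,\cdot,\s)$, whose real parts coincide with those of $\spec\cL^{\e,\kappa}_\s$, lies in $\{\Re\l<0\}$ for all such $\s$, so both \eqref{specstab} and \eqref{diffstab} hold there for free. Since $\delta$ is arbitrary, stability or instability is decided entirely on $|\s|\ll 1$.

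The step I expect to be the main obstacle is precisely this last one: $R(\e,\kappa,\s)$ is an \emph{unbounded} (order-$m$) perturbation rather than a bounded one, so continuity of the spectrum in $\e$ cannot be read off from an operator-norm bound and must instead be extracted from relatively-bounded resolvent estimates; this is where the structural parabolicity/dissipativity hypotheses are genuinely used, and where the argument must be adapted in Section~\ref{sec:other} (replacing compactness of the resolvent by the appropriate high-frequency resolvent bound). Everything else --- Fourier diagonalization at $\e=0$, the elementary consequences of Turing Hypotheses~\ref{hyp:Turing}, and the $(I-P)$-invertibility already furnished for the whole range $|\s|<\tfrac{1}{2}$ by Proposition~\ref{prop:CVExist} --- is routine.
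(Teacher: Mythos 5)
Your proposal is correct and follows essentially the same route as the paper: stability of the constant-coefficient operator at $\e=0$ for $|\s|$ bounded away from zero via the Turing hypotheses, followed by a relative-boundedness/Neumann-series resolvent argument to transfer this to small $\e>0$. The paper implements the step you flag as the main obstacle exactly as you anticipate, composing the $\cO(\e)$ bound on $\Delta\cL(\e):H^s_{per}\to L^2_{per}$ with the boundedness of the unperturbed resolvent $R(0,\l):L^2_{per}\to H^s_{per}$ to obtain invertibility of $Id-R(0,\l)\Delta\cL(\e)$ and hence spectral continuity.
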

	\begin{proof}
		Note that $L(k,\mu)$ is the linearization about $u=0$, and by the Turing hypotheses it has stable spectrum outside a small open set centered around $\{\pm (k_*,0) \}$. Hence if $|\s|$ is large enough, it follows that $L(k,\mu;\s)$; or equivalently $L(k,\mu;\s)+d(\e,\kappa)\d_\xi$, has stable spectrum.

		By the ellipticity of $L(k,\mu)$, we have for $\l\in\rho(L(k,\mu;\s)$ that $(\l-L(k_,\mu;\s))^{-1}:L^2_{per}([0,2\pi];\RR^n)\rightarrow H^s_{per}([0,2\pi];\RR^n)$ is a bounded operator. In addition, for all $\e,\kappa$ such that $\tilde{u}_{\e,\kappa}$ exists we have that $D\cN(u_{\e,\kappa}):H^s_{per}([0,2\pi];\RR^n)\rightarrow L^2_{per}([0,2\pi];\RR^n)$ is a bounded operator. We denote $L(k,\mu;\s)+d(\e,\kappa)\d_\xi+D\cN(\tilde{u}_{\e,\kappa})$ by $\cL(\e)$ and the corresponding resolvent by $R(\e,\l$), and rewrite $\l-\cL(\e)$ as $\l-\cL(0)-\Delta\cL(\e)=(\l-\cL(0))(Id-R(0,\l)\Delta\cL(\e))$. Since $\Delta\cL(\e)$ is bounded from $H^s_{per}([0,2\pi];\RR^n)\rightarrow L^2_{per}([0,2\pi];\RR^n)$ with norm $\cO(\e)$ and $R(0,\l):L^2_{per}([0,2\pi];\RR^n)\rightarrow H^s_{per}([0,2\pi];\RR^n)$ is bounded, we find for $\e$ sufficiently small that $(Id-R(0,\l)\Delta\cL(\e)):H^s_{per}([0,2\pi];\RR^n)\rightarrow H^s_{per}([0,2\pi];\RR^n)$ is invertible by expanding in a Neumann series. In particular, $R(\e,\l)=R(0,\l)+\cO(\e)$. A similar calculation gives continuity of the resolvent about other $\e_0$. From this we conclude spectral continuity of $B(\e,\kappa,\l,\s)$. 
		By the spectral continuity argument above, we see that $L(k,\mu;\s)+d(\e,\kappa)\d_\xi+D\cN(\tilde{u}_{\e,\kappa})$ has stable spectrum for $|\s|>=\s_0>0$ for $\e$ sufficiently small.
	\end{proof}
	\begin{remark}
		When $L(k,\mu)$ is a differential operator and $\cN$ is a local nonlinearity, one can replace the direct verification of spectral continuity above with a simpler Evans function argument \cite{G}.
	\end{remark}
	Next, we look at the equation
	\begin{equation}\label{eq:PEqn}
		PB(\e,\kappa,\l,\s)(\Upsilon_\b+\cV)=0.
	\end{equation}
	Much as in the coperiodic case, we Taylor expand this equation to second order in $\e$. Motivated by the form of \eqref{eq:hopefulreducedeqn}, 
	we Taylor expand \eqref{eq:PEqn} to second order in $\s$ as well. 
	Symbolically, we have then the expansion
	\begin{equation}\label{eq:GenPBTaylor}
	\begin{split}
		P\left[B(0,\kappa,\l,0)+\e B_\e(0,\kappa,\l,0)+\s B_\s(0,\kappa,\l,0)+\right. \\ \left. \frac{1}{2}\left(\e^2 B_{\e\e}(0,\kappa,\l,0)+2\e\s B_{\e\s}(0,\kappa,\l,0)+\s^2 B_{\s\s}(0,\kappa,\l,0) \right)+\cO(\e^3,\e^2\s,\e\s^2,\s^3) \right]\\ \left[\Upsilon_\b+\e \cV_\e(0,\kappa,\l,0,\b)+\s \cV_\s(0,\kappa,\l,0,\b)+\right. \\ \left. \frac{1}{2}\left(\e^2 \cV_{\e\e}(0,\kappa,\l,0,\b)+2\e\s \cV_{\e\s}(0,\kappa,\l,0,\b)+\s^2 \cV_{\s\s}(0,\kappa,\l,0,\b) \right)+\cO(\e^3,\e^2\s,\e\s^2,\s^3) \right]=0.
	\end{split}
	\end{equation}
	From \eqref{eq:GenPBTaylor}, we collect powers of $\e$ and $\s$, and simplify terms in the following sequence of lemmas.
	\begin{lemma}\label{lem:GTaylorO1}
		$PB(0,\kappa,\l,0)\Upsilon_\b$ in reduced form is given by
			$
			-\l \Upsilon_\b.
			$
	\end{lemma}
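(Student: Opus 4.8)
The plan is to evaluate the Bloch operator $B(\e,\kappa,\l,\s)$ of \eqref{eq:BlochDef} at $\e=0$, $\s=0$ and observe that it collapses to exactly the operator already analyzed in the coperiodic case, where Observation \ref{obs:keycommutator} furnishes the conclusion. First I would record the simplifications at $\e=0$: there $k=k_*$, $\mu=0$, $d(0,\kappa)=d_*$, and $\tilde u_{0,\kappa}=0$ by the amplitude scaling $\tilde u_{\e,\kappa}=\e\Upsilon_\a+\e V(\e,\kappa,\a)$ of \eqref{reform}; since $\cN$ is of quadratic order this gives $D\cN(\tilde u_{0,\kappa})=D\cN(0)=0$. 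At $\s=0$ the affine shift term $i\s\big(kd_*+kk_*d_\e(0,\kappa)/\kappa\big)$ vanishes, and $L(k_*,0;0)=L(k_*,0)$. Hence
\[
B(0,\kappa,\l,0)=L(k_*,0)+k_*d_*\d_\xi-\l,
\]
which is precisely \eqref{eq:D0BDeps0}.

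Next I would invoke the definition of $P$ as the projection onto $\ker\big(L(k_*,0)+k_*d_*\d_\xi\big)$ and the definition \eqref{eq:upsilondef} of $\Upsilon_\b=\Re(\b e^{i\xi}r)$, which by construction lies in that kernel, i.e.\ in $\Range P$; thus $P\Upsilon_\b=\Upsilon_\b$ and $\big(L(k_*,0)+k_*d_*\d_\xi\big)\Upsilon_\b=0$. Combining, $B(0,\kappa,\l,0)\Upsilon_\b=-\l\Upsilon_\b$, and applying $P$ gives $PB(0,\kappa,\l,0)\Upsilon_\b=-\l P\Upsilon_\b=-\l\Upsilon_\b$. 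Identifying $\Upsilon_\b$ with $\b$ (equivalently with the real pair $(\b_1,\b_2)$) in the reduced variables, the $\cO(1)$ coefficient of the reduced equation is $-\l\b$, as claimed. This is Observation \ref{obs:keycommutator} specialized to $\s=0$, and it also serves as the starting check that the reduced eigenvalue problem has the expected leading term.

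There is essentially no obstacle at this stage; the only point requiring a moment's care is confirming that the extra constant $kk_*d_\e(0,\kappa)/\kappa$ multiplying $i\s$ in \eqref{eq:BlochDef} — the feature distinguishing $B$ from the usual Bloch operator $\tilde B$ — plays no role here, which is immediate since it enters only through the $\s$-linear term and we are evaluating at $\s=0$. The content of that normalization is deferred to the $\cO(\s)$ lemma, where the constant is chosen so that the resulting spectral curves match the group-velocity frame of the Ginzburg-Landau scaling rather than the coordinate frame in which $\tilde u_{\e,\kappa}$ is stationary.
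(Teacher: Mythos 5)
Your proposal is correct and follows the same route as the paper: the paper's proof is simply an appeal to Observation \ref{obs:keycommutator} ($PB(0,\kappa,\l)=-\l P$), which you reproduce after verifying that $B(0,\kappa,\l,0)$ reduces to the coperiodic operator \eqref{eq:D0BDeps0} and that $\Upsilon_\b$ lies in $\Range P$. The extra details you supply (vanishing of the nonlinear term at $\tilde u_{0,\kappa}=0$ and of the $i\s$ shift at $\s=0$) are consistent with the paper and merely make explicit what the paper leaves implicit.
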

	\begin{proof}
		This is follows from an application of the observation in \eqref{obs:keycommutator}.
	\end{proof}

	\begin{lemma}\label{lem:GTaylorOE}
		$PB_\e(0,\kappa,\l,0)\Upsilon_\b+PB(0,\kappa,\l,0)\cV_\e(0,\kappa,\l,0,\b)$ vanishes identically.
	\end{lemma}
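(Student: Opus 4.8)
The plan is to imitate the $\cO(\eps)$ computation already carried out in the coperiodic case (equations \eqref{eq:PBTaylorOE}--\eqref{eq:dequation}), now keeping track of the extra $i\s$ term in the new Bloch operator \eqref{eq:BlochDef}. First I would invoke Observation \eqref{obs:keycommutator}, which gives $PB(0,\kappa,\l,0)=-\l P$; since $P\cV=0$ forces $P\cV_\e(0,\kappa,\l,0,\b)=0$, the second summand $PB(0,\kappa,\l,0)\cV_\e(0,\kappa,\l,0,\b)$ vanishes outright, exactly as in \eqref{eq:PBTaylorOE}. So the content of the lemma is that $PB_\e(0,\kappa,\l,0)\Upsilon_\b=0$.

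Next I would expand $B_\e(0,\kappa,\l,0)$ from the definition \eqref{eq:BlochDef}, differentiating in $\e$ at $\e=0$ and $\s=0$. The derivative produces: (i) a term $\kappa L_k(k_*,0;0)D_\xi$ from the $\e$-dependence of $k=k_*+\eps\kappa$ inside $L(k,\mu;\s)$ (note $\mu_\e(0)=0$ since $\mu=\eps^2$, so no $L_\mu$ contribution appears at this order); (ii) a term $\big(k_* d_\e(0,\kappa)+\kappa d_*\big)\d_\xi$ from $dk\d_\xi$; (iii) a term $i\s$ times the $\e$-derivative of $\big(kd_*+ kk_*d_\e(0,\kappa)/\kappa\big)$, which is killed by setting $\s=0$; and (iv) the nonlinear term $D^2\cN(0)(\d_\e\tilde u_{0,\kappa},\cdot)$. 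Thus at $\s=0$ the expression is identical to the coperiodic $B_\e(0,\kappa,\l)$ of \eqref{eq:DBDeps}, and the argument of \eqref{eq:PBTaylorOE2}--\eqref{eq:PBTaylorOE3} applies verbatim: the nonlinear term has Fourier support in $\{0,\pm2\}$ (since $\d_\e\tilde u_{0,\kappa}=\Upsilon_\a$ by Proposition \ref{mainLS}) and is annihilated by $P$, while spectral perturbation theory reduces the remaining linear piece to $\tfrac12\big[\kappa\tl_k(k_*,0)+i(k_*d_\e(0,\kappa)+\kappa d_*)\big]\b e^{i\xi}r+c.c.$ Finally the bracket vanishes by the $d$-equation \eqref{eq:dequation}, after the substitution $\kappa\mapsto\eps\kappa$, $\mu\sim\eps^2$ and differentiation in $\eps$ at $\eps=0$, exactly as in the coperiodic proof.

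I expect the only real point needing care is bookkeeping: confirming that the $i\s$-dependent coefficient $kd_*+kk_*d_\e(0,\kappa)/\kappa$ in \eqref{eq:BlochDef} genuinely drops out when evaluated at $\s=0$ (it does, since it multiplies $\s$), and that no $\s$-derivative has slipped into a pure $\e$-derivative. With that check, the proof is a one-line reduction to the coperiodic computation, and I would phrase it that way: "The term $PB(0,\kappa,\l,0)\cV_\e(0,\kappa,\l,0,\b)$ vanishes by Observation \eqref{obs:keycommutator} and $P\cV_\e=0$; evaluating $B_\e$ at $\s=0$ returns the coperiodic operator \eqref{eq:DBDeps}, so $PB_\e(0,\kappa,\l,0)\Upsilon_\b=0$ by the argument of \eqref{eq:PBTaylorOE2}--\eqref{eq:dequation}." No substantive obstacle is anticipated; this lemma is the $\s$-augmented analogue of an already-proven identity.
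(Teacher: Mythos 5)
Your proposal is correct and follows the paper's own argument exactly: kill the $\cV_\e$ term via Observation \eqref{obs:keycommutator}, observe that $B_\e(0,\kappa,\l,0)$ at $\s=0$ coincides with the coperiodic operator \eqref{eq:DBDeps} (the $i\s$-coefficient term dropping out), and reduce to the coperiodic computation \eqref{eq:PBTaylorOE2}--\eqref{eq:dequation}. Your version is in fact slightly more explicit than the paper's three-line proof, but the route is identical.
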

	\begin{proof}
		We have by observation \eqref{obs:keycommutator}
		\begin{equation}\label{eq:GTOE1}
			PB_\e(0,\kappa,\l,0)\Upsilon_\b+PB(0,\kappa,\l,0)\cV_\e(0,\kappa,\l,0,\b)=PB_\e(0,\kappa,\l,0)\Upsilon_\b.
		\end{equation}
		Expanding the operator in \eqref{eq:GTOE1}, we find that
		\begin{equation}\label{eq:GTOE2}
			P\left[\kappa L_k(k_*,0;0)D_\xi+\left(\kappa d_*+k_* d_\e(0,\kappa) \right)\d_\xi+D^2\cN(0)(\Upsilon_\a,\cdot) \right]\Upsilon_\b.
		\end{equation}
		But $L_k(k_*,0;0)$ is the same operator as $L_k(k_*,0)$ from the coperiodic case, so as before we know that this vanishes.
	\end{proof}

	\begin{lemma}\label{lem:GTaylorOS}
		$PB_\s(0,\kappa,\l,0)\Upsilon_\b+PB(0,\kappa,\l,0)\cV_\s(0,\kappa,\l,0,\b)$ also vanishes identically.
	\end{lemma}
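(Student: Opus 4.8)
The plan is to mirror the structure of the proof of Lemma \ref{lem:GTaylorOE}, exploiting the fact (noted immediately after \eqref{eq:derivop2}) that $L_\s(k,\mu;\s)=kL_k(k,\mu;\s)$, so that the $\s$-derivative of the Bloch operator $B$ is, up to known scalar factors, the same multiplier operator $L_k$ that already appeared in the coperiodic analysis. First I would invoke Observation \eqref{obs:keycommutator} in the form $PB(0,\kappa,\l,0)=-\l P$, which together with $P\cV=0$ (hence $P\cV_\s=0$, from Proposition \ref{prop:CVExist}(1)) kills the second term: $PB(0,\kappa,\l,0)\cV_\s(0,\kappa,\l,0,\b)=-\l P\cV_\s(0,\kappa,\l,0,\b)=0$. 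So the lemma reduces to showing that $PB_\s(0,\kappa,\l,0)\Upsilon_\b=0$.

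Next I would differentiate the definition \eqref{eq:BlochDef} of $B$ in $\s$ and evaluate at $(\e,\s)=(0,\cdot\,;0)$. Using $L_\s(k_*,0;0)=k_* L_k(k_*,0;0)$ and the fact that $D\cN(\tilde u_{\e,\kappa})$ is $\s$-independent, the only surviving contributions are $k_* L_k(k_*,0;0)\Upsilon_\b$ from the symbol term and the constant $i\big(k_*d_* + k_* k_* d_\e(0,\kappa)/\kappa\big)\Upsilon_\b$ from the explicit $i\s(\cdots)$ term; the $dk\d_\xi$ term contributes nothing since it carries no $\s$. Thus
\begin{equation*}
	PB_\s(0,\kappa,\l,0)\Upsilon_\b = P\Big[k_* L_k(k_*,0;0)D_\xi + i\Big(k_*d_* + \frac{k_*^2 d_\e(0,\kappa)}{\kappa}\Big)\Big]\Upsilon_\b,
\end{equation*}
where the $D_\xi$ arises because $L_k(k_*,0;0)$ acting on $e^{i\eta\xi}$ produces $S_k(k_*\eta,0)$ and the chain rule in $\s$ brings down a factor $\eta$. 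Since $L_k(k_*,0;0)$ is literally the operator $L_k(k_*,0)$ from the coperiodic case, and since $\Upsilon_\b=\Re(\b e^{i\xi}r)$ is Fourier-supported on $\eta=\pm1$, spectral perturbation theory (exactly as in the passage from \eqref{eq:PBTaylorOE2} to \eqref{eq:PBTaylorOE3}) gives that the $e^{i\xi}$-coefficient is $\tfrac12\big[k_*\tl_k(k_*,0) + i(k_*d_* + k_*^2 d_\e(0,\kappa)/\kappa)\big]\b r$.

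Finally I would show the bracketed scalar vanishes. Factoring out $k_*$, it equals $k_*\big[\tl_k(k_*,0) + i(d_* + k_* d_\e(0,\kappa)/\kappa)\big]$, and since $\Re\tl_k(k_*,0)=0$ by Hypothesis (H4), its real part is already zero; the imaginary part is $k_*\big[\Im\d_k\tl(k_*,0) + d_* + k_* d_\e(0,\kappa)/\kappa\big]$. This is precisely the combination that was shown to vanish in the coperiodic case: differentiating the $\delta$-equation \eqref{eq:dequation} in $\e$ at $\e=0$ under the identifications $\kappa=\e\kappa$, $\mu\sim\e^2$, $n=\cO(\e^2)$ gave $\Im\d_k\tl(k_*,0)\kappa + d_*\kappa + \delta_\e(0,\kappa)k_* = 0$, i.e. $\Im\d_k\tl(k_*,0) + d_* + k_* d_\e(0,\kappa)/\kappa = 0$ after dividing by $\kappa$ (recalling $\delta=d-d_*$, so $\delta_\e=d_\e$). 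Hence $PB_\s(0,\kappa,\l,0)\Upsilon_\b=0$ and the lemma follows.

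I expect the only delicate point to be bookkeeping the factors of $k$ versus $k_*$ and of $\kappa$ in the $\xi_g$-normalization: the constant $kk_* d_\e(0,\kappa)/\kappa$ in \eqref{eq:BlochDef} was chosen (as the surrounding discussion of $\xi_g$ emphasizes) precisely so that this cancellation goes through, so the main obstacle is making sure the $\s$-derivative of that explicit constant term is recorded correctly and matched against the $\d_\e\xi_p|_{\e=0}$ computation rather than a naive $\d_\e\xi_p$. Everything else is a routine repetition of the coperiodic argument.
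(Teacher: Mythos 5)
Your argument follows the paper's proof essentially verbatim: kill the $\cV_\s$ term with Observation \ref{obs:keycommutator} (plus $P\cV\equiv 0\Rightarrow P\cV_\s=0$), expand $B_\s(0,\kappa,\l,0)$, project onto the $e^{\pm i\xi}$ modes, and recognize the resulting scalar bracket as exactly the combination killed by the $\cO(\e)$ consequence of the $\delta$-equation \eqref{eq:dequation} from the existence analysis. The conclusion and the key cancellation are right.

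One computational slip, though: $B_\s$ does \emph{not} contain a factor of $D_\xi$. Since $L(k,\mu;\s)U(\xi)=\sum_\eta S(k(\eta+\s),\mu)\hat U(\eta)e^{i\eta\xi}$, the chain rule in $\s$ brings down the factor $\partial_\s[k(\eta+\s)]=k$, not $\eta$; this is exactly the identity $L_\s(k,\mu;\s)=kL_k(k,\mu;\s)$ stated after \eqref{eq:derivop2}, and it is the $\e$-derivative (through $k=k_*+\e\kappa$) that produces the $L_kD_\xi$ structure, since $\partial_k S(k\eta,\mu)=\eta S_k(k\eta,\mu)$. So the correct expansion is $PB_\s(0,\kappa,\l,0)\Upsilon_\b=P[k_*L_k(k_*,0;0)+i(d_*k_*+k_*^2d_\e(0,\kappa)/\kappa)]\Upsilon_\b$, as in \eqref{eq:GTOS2}. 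For this particular lemma your error is harmless --- on the $\eta=+1$ mode $D_\xi$ acts as multiplication by $1$, and the bracket vanishes anyway so both modes are killed either way --- but if you carry the spurious $D_\xi$ into the $\cO(\e\s)$ and $\cO(\s^2)$ computations of Lemmas \ref{lem:GTaylorOES}--\ref{lem:GTaylorOSS} you risk sign and structure errors, so it is worth correcting now.
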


	\begin{proof}
		As before, by the observation in \eqref{obs:keycommutator} we have
		\begin{equation}\label{eq:GTOS1}
			PB_\s(0,\kappa,\l,0)\Upsilon_\b+PB(0,\kappa,\l,0)\cV_\s(0,\kappa,\l,0,\b)=PB_\s(0,\kappa,\l,0)\Upsilon_\b.
		\end{equation}
		Writing out $B_\s$, we find that
		\begin{equation}\label{eq:GTOS2}
			PB_\s(0,\kappa,\l,0)\Upsilon_\b=P\left[k_*L_k(k_*,0;0)+i\left(d_*k_*+\frac{k_*^2d_\e(0,\kappa)}{\kappa}\right) \right]\Upsilon_\b.
		\end{equation}
		By definition of $L_k(k_*,0;0)$ and $P$ we may expand
		\ba\label{eq:GTOS3}
			PB_\s(0,\kappa,\l,0)\Upsilon_\b=
\frac{1}{2}\big(\b e^{i\xi}\big[k_* \ell S_k(k_*,0)r 
&+i\big(d_*k_*+\frac{k_*^2d_\e(0,\kappa)}{\kappa}\big) \big]r\\
		&\quad 
+\bar{\b} e^{-i\xi}\big[k_* \bar{\ell} S_k(-k_*,0)\bar{r}
		 +i\big(d_*k_*+\frac{k_*^2d_\e(0,\kappa)}{\kappa}\big) \big]\bar{r}\big).
			\ea
		Note that $S_k(-k_*,0)=-\overline{S_k(k_*,0)}$, and $\ell S_k(k_*,0)r=\hal_k(k_*,0)$ so this reduces to
		\begin{equation}\label{eq:GTOS4}
			PB_\s(0,\kappa,\l,0)\Upsilon_\b=
			k_*\big(\tl_k(k_*,0)+i\big(d_*+\frac{k_*d_\e(0,\kappa)}{\kappa}\big) \big)\Upsilon_\b=0.
		\end{equation}
	\end{proof}

	\begin{lemma}\label{lem:GTaylorOEE}
		In reduced form, $PB_{\e\e}(0,\kappa,\l,0)\Upsilon_\b+2PB_\e(0,\kappa,\l,0)\cV_\e(0,\kappa,\l,0,\b)$ is given by the following expression
			$$
		 2\a^2 \bp \Re\g & 0 \\ \Im\g & 0 \ep \bp \b_1\\ \b_2\ep +\bp \cO(\e,|\l|) & \cO(|\l|) \\ \cO(\e,|\l|) & \cO(|\l|)\ep \bp \b_1\\ \b_2\ep.
		 $$
	\end{lemma}

	\begin{proof}
		It suffices to note that $PB_{\e\e}(0,\kappa,\l,0)\Upsilon_\b+2PB_\e(0,\kappa,\l,0)\cV_\e(0,\kappa,\l,0,\b)$ agrees with the corresponding term in the coperiodic case.
	\end{proof}

	\begin{lemma}\label{lem:GTaylorOES}
		In reduced form, $PB_{\e\s}(0,\kappa,\l,0)\Upsilon_\b+PB_\e(0,\kappa,\l,0)\cV_\s(0,\kappa,\l,0,\b)+PB_\s(0,\kappa,\l,0)\cV_\e(0,\kappa,\l,0,\b)$ is given by
		\begin{equation}
			-i[[i\kappa k_*\tl_{kk}(k_*,0)]]\bp \b_1 \\ \b_2\ep+\cO(|\l|).
		\end{equation}
	\end{lemma}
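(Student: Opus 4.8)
The plan is to compute the $\cO(\e\s)$ contribution in the Taylor expansion \eqref{eq:GenPBTaylor} exactly as the $\cO(\e^2)$ term was handled in the coperiodic analysis, now carrying along the extra $\s$-derivative. First I would read off $B_{\e\s}(0,\kappa,\l,0)$ directly from \eqref{eq:BlochDef} and \eqref{eq:derivop2}. Since $\d_\s L(k,\mu;\s)=kL_k(k,\mu;\s)$, one further $\e$-derivative at the origin (using $\d_\e k=\kappa$ and $\mu_\e(0)=0$) produces $\kappa L_k(k_*,0)+\kappa k_*L_{kk}(k_*,0)D_\xi$, while $\d_\e\d_\s$ of the explicit affine term $i\s\big(kd_*+kk_*d_\e(0,\kappa)/\kappa\big)$ produces the constant $i\big(\kappa d_*+k_*d_\e(0,\kappa)\big)$; the $dk\d_\xi$ and $D\cN(\tilde u_{\e,\kappa})$ pieces are $\s$-independent and contribute nothing. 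I would also note that the only other term present at order $\e\s$, namely $PB(0,\kappa,\l,0)\cV_{\e\s}$, vanishes by Observation \eqref{obs:keycommutator} together with $P\cV_{\e\s}=0$, so that only the three terms in the statement require work.

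Next I would evaluate each of the three terms by projecting onto the $e^{i\xi}r$ mode, using $\Upsilon_\b=\Re(\b e^{i\xi}r)$, the formulas of Proposition \ref{prop:CVExist} for $\cV_\e$ and $\cV_\s$ (the nontrivial parts of which are both built from the vector $(I-\Pi)S_k(k_*,0)r$, with $\cV_\e$ carrying in addition a Fourier-$\{0,\pm2\}$ piece coming from $D^2\cN(0)(\Upsilon_\a,\Upsilon_\b)$), the expansion $T_\l=N+\cO(|\l|)$, and spectral perturbation theory ($\Pi S_k(k_*,0)r=\tl_k(k_*,0)r$, $S_k(-k_*,0)=-\overline{S_k(k_*,0)}$). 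This yields: the $e^{i\xi}r$-coefficient of $PB_{\e\s}(0,\kappa,\l,0)\Upsilon_\b$ is $\tfrac12\big(\kappa\tl_k(k_*,0)+i(\kappa d_*+k_*d_\e(0,\kappa))\big)\b+\tfrac{\kappa k_*}2\big(\ell S_{kk}(k_*,0)r\big)\b$, while in both $PB_\e(0,\kappa,\l,0)\cV_\s$ and $PB_\s(0,\kappa,\l,0)\cV_\e$ the $\d_\xi$-type and constant-multiple pieces of $B_\e,B_\s$ are annihilated because $P\cV_\e=P\cV_\s=0$, the nonlinear cross-pieces vanish by their Fourier support $\{0,\pm2\}$, and the surviving contribution is in each case the single scalar $-\tfrac{\kappa k_*}2\big(\ell S_k(k_*,0)N(I-\Pi)S_k(k_*,0)r\big)\b$.

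The computation then finishes by two collapses. Adding the $L_{kk}$-term of $PB_{\e\s}\Upsilon_\b$ to the two equal copies coming from $PB_\e\cV_\s$ and $PB_\s\cV_\e$ and applying the spectral identity \eqref{eq:spectralid} in the form $\tl_{kk}(k_*,0)=\ell S_{kk}(k_*,0)r-2\ell S_k(k_*,0)N(I-\Pi)S_k(k_*,0)r$ collapses them to $\tfrac{\kappa k_*}2\tl_{kk}(k_*,0)\b$; and the leftover scalar $\tfrac12\big(\kappa\tl_k(k_*,0)+i(\kappa d_*+k_*d_\e(0,\kappa))\big)\b$ vanishes identically by \eqref{eq:GTOS4} (equivalently, by the compatibility equation determining $d$ in the Lyapunov--Schmidt reduction), which gives $\kappa\tl_k(k_*,0)=-i(\kappa d_*+k_*d_\e(0,\kappa))$. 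Hence the $e^{i\xi}r$-coefficient of the whole $\cO(\e\s)$ term is proportional to $\kappa k_*\tl_{kk}(k_*,0)\b$, and assembling this into the reduced $2\times 2$ form gives $-i[[i\kappa k_*\tl_{kk}(k_*,0)]]\bp\b_1\\ \b_2\ep+\cO(|\l|)$, as claimed.

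The hard part will be the bookkeeping rather than any single estimate: keeping straight which of the many sub-terms in $PB_\e\cV_\s$ and $PB_\s\cV_\e$ survive, versus those killed by $P\cV_\bullet=0$, by the Fourier-support constraint, or by replacing $T_\l$ with $N$, and observing that the two cross-terms come out literally equal so that \eqref{eq:spectralid} applies verbatim, just as in the coperiodic case. The one genuinely new feature --- consistent with the remark after \eqref{eq:LSreduced} that the wave speed $d$ never makes a direct appearance in the complex Ginzburg--Landau reduction or its stability criteria --- is the exact cancellation of the $d$-dependent scalar via \eqref{eq:GTOS4}, the same mechanism by which the $\cO(\s)$ term vanished in Lemma~\ref{lem:GTaylorOS}.
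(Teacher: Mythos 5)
Your proposal is correct and follows essentially the same route as the paper: compute $B_{\e\s}$ from the symbol, discard $PB(0,\kappa,\l,0)\cV_{\e\s}$ via Observation \ref{obs:keycommutator}, reduce each of the three surviving terms by Fourier-support and commutation arguments with $T_\l=N+\cO(|\l|)$, cancel the $d$-dependent scalar together with $\kappa\tl_k(k_*,0)$ using \eqref{eq:GTOS4}, and collapse the remaining $\ell S_{kk}r-2\ell S_kNS_kr$ combination via the spectral identity \eqref{eq:spectralid}. The only detail you gloss over is the explicit conversion of the resulting ``$z e^{i\xi}r - c.c.$'' structure into the $-i[[i\,\cdot\,]]$ matrix form via $\tfrac12(z-\bar z)=-\tfrac{i}{2}(iz+\overline{iz})$, which the paper records in \eqref{eq:OESRed2}.
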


	\begin{proof}
		We note that the expression in the lemma is the coefficient of $\e\s$ in \eqref{eq:GenPBTaylor}.
		Strictly speaking, there should be also a $PB(0,\kappa,\l,0)\cV_{\s\e}(0,\kappa,\l,0,\b)$ term,
		but this vanishes for the same reason as in the previous three lemmas. 
		First, we compute $B_{\e\s}(0,\kappa,\l,0)$ as
		\begin{equation}\label{eq:BES}
			B_{\e\s}(0,\kappa,\l,0)=\kappa L_k(k_*,0;0)+k_*\kappa L_{kk}(k_*,0;0)D_\xi +i(\kappa d_*+k_* d_\e(0,\kappa)).
		\end{equation}
		Consider $PB_{\e\s}(0,\kappa,\l,0)\Upsilon_\b$. From \eqref{eq:BES}, we find
		\ba\label{eq:PBES1}
			PB_{\e\s}(0,\kappa,\l,0)\Upsilon_\b&=i(\kappa d_*+k_* d_\e(0,\kappa))\Upsilon_\b
				\\ &\quad 
			+ P(\kappa L_k(k_*,0;0)
				+k_*\kappa L_{kk}(k_*,0;0)D_\xi)\Upsilon_\b.
				\ea
		For the second line, we use the definitions in \eqref{eq:derivop2} to compute
		\ba
			P(\kappa L_k(k_*,0;0)+k_*\kappa L_{kk}(k_*,0;0)D_\xi)\Upsilon_\b&=
			\frac{1}{2}\kappa \b e^{i\xi}r\big[\ell S_k(k_*,0)r+k_*\ell S_{kk}(k_*,0)r \big]\\
			& \quad +
			\frac{1}{2}\kappa \bar{\b} e^{-i\xi}\bar{r}\big[\bar{\ell} S_k(-k_*,0)\bar{r}-k_*\bar{\ell} S_{kk}(-k_*,0)\bar{r} \big]\\
			&=
			\kappa\tl_k(k_*,0)\Upsilon_\b+\frac{1}{2}\kappa k_*\ell S_{kk}(k_*,0)r \b e^{i\xi }r-c.c.
			\ea
		So, our final form of \eqref{eq:PBES1} is
		\begin{equation}\label{eq:PBES2}
			PB_{\e\s}(0,\kappa,\l,0)\Upsilon_\b=\frac{1}{2}\kappa k_*\ell S_{kk}(k_*,0)r \b e^{i\xi }r-c.c.
		\end{equation}

		Next, we look at the terms involving the first derivatives of $\cV$. First, we 
		compute
		\begin{equation}\label{eq:PBESES1}
			PB_\e(0,\kappa,\l,0)\cV_\s(0,\kappa,\l,0,\b)=-PB_\e(0,\kappa,\l,0)(I-P)T_\l(I-P)B_\s(0,\kappa,\l,0)\Upsilon_\b,
		\end{equation}
		where we've used the third identity in proposition \eqref{prop:CVExist}. Expanding $B_\e(0,\kappa,\l,0)$ and $B_\s(0,\kappa,\l,0)$, we find
		\ba\label{eq:PBESES2}
			PB_\e(0,\kappa,\l,0)\cV_\s(0,\kappa,\l,0,\b)&=-P\big[\kappa L_k(k_*,0;0)D_\xi+\big(\kappa d_*+k_* d_\e(0,\kappa) \big)\d_\xi\\
			&\quad +D^2\cN(0)(\Upsilon_\a,\cdot) \big] (I-P)T_\l(I-P)\big[k_*L_k(k_*,0;0)\big]\Upsilon_\b.
		\ea
		Immediately, we see that $(I-P)i\left(d_*k_*+\frac{k_*^2d_\e(0,\kappa)}{\kappa}\right)\Upsilon_\b=0$, since $(I-P)\Upsilon_\b=0$. Similarly, $P\left(\kappa d_*+k_* d_\e(0,\kappa) \right)\d_\xi(I-P)=0$, 
		since $\d_\xi$ commutes with all Fourier multiplier operators. This reduces \eqref{eq:PBESES2} to
			\ba\label{eq:PBESES3}
			PB_\e(0,\kappa,\l,0)\cV_\s(0,\kappa,\l,0,\b)&=-P\big[\kappa L_k(k_*,0;0)D_\xi\\
			&\quad +D^2\cN(0)(\Upsilon_\a,\cdot) \big] (I-P)T_\l(I-P)k_*L_k(k_*,0;0)\Upsilon_\b.
			\ea

		Next, we observe that since $(I-P)T_\l(I-P)k_*L_k(k_*,0;0)$ is a Fourier multiplier operator, the Fourier support of 
		$
			D^2\cN(0)(\Upsilon_\a,(I-P)T_\l(I-P)k_*L_k(k_*,0;0)\Upsilon_\b)
			$
		is contained in $\{0,\pm2 \}$, 
		which is in the kernel of $P$. Combining this with Taylor's theorem gives
		\begin{equation}\label{eq:PBESES4}
			PB_\e(0,\kappa,\l,0)\cV_\s(0,\kappa,\l,0,\b)=-\frac{1}{2}\b\kappa k_*\left[\ell S_k(k_*,0)NS_k(k_*,0)r \right]e^{i\xi }r-c.c.+\cO(|\l|),
		\end{equation}
		where $N=\left[(I-\Pi)S(k_*,0)+id_*k_*(I-P) \right]^{-1}$. An essentially identical computation reveals that
		\begin{equation}\label{eq:PBESSE}
				PB_\s(0,\kappa,\l,0)\cV_\e(0,\kappa,\l,0,\b)=-\frac{1}{2}\b\kappa k_*\left[\ell S_k(k_*,0)NS_k(k_*,0)r \right]e^{i\xi }r-c.c.+\cO(|\l|).
		\end{equation}

		Summing \eqref{eq:PBES2}, \eqref{eq:PBESES4} and \eqref{eq:PBESSE}, we get
		\begin{equation}
			\frac{1}{2}\kappa k_*\b\left[\ell S_{kk}(k_*,0)r-2\ell S_k(k_*,0)NS_k(k_*,0)r \right]e^{i\xi}r-c.c.
		\end{equation}
		From the spectral identity \eqref{eq:spectralid}, we have that this is equal to
		\begin{equation}\label{eq:OESRed1}
			\frac{1}{2}2\kappa k_*\tl_{kk}(k_*,0)\b e^{i\xi}r-c.c.
		\end{equation}
		To make this look like the corresponding term in the complex Ginzburg Landau computation, we note that
		$
			\frac{1}{2}(z-\bar{z})=-i\frac{1}{2}(iz+\bar{iz}),
			$
		which gives in \eqref{eq:OESRed1} 
		\begin{equation}\label{eq:OESRed2}
			\frac{1}{2}2\kappa k_*\tl_{kk}(k_*,0)\b e^{i\xi}r-c.c.=-i\left(\frac{1}{2}2i\kappa k_*\tl_{kk}(k_*,0)\b e^{i\xi}r+c.c.\right).
		\end{equation}

		This gives the claimed equality.
	\end{proof}

	\begin{lemma}\label{lem:GTaylorOSS}
		In reduced form, $\frac{1}{2}PB_{\s\s}(0,\kappa,\l,0)\Upsilon_\b+PB_\s(0,\kappa,\l,0)\cV_\s(0,\kappa,\l,0,\b)$ is given by
		\begin{equation}
			[[\frac{1}{2}k_*^2\tl_{kk}(k_*,0)]]\bp \b_1 \\ \b_2 \ep+\cO(|\l|)\bp \b_1 \\ \b_2\ep.
		\end{equation}
	\end{lemma}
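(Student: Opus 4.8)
The plan is to read off the coefficient of $\s^2$ in the Taylor expansion \eqref{eq:GenPBTaylor} and reduce it exactly as in Lemmas~\ref{lem:GTaylorOEE}--\ref{lem:GTaylorOES}. That coefficient is $\tfrac{1}{2}PB_{\s\s}(0,\kappa,\l,0)\Upsilon_\b + PB_\s(0,\kappa,\l,0)\cV_\s(0,\kappa,\l,0,\b) + \tfrac{1}{2}PB(0,\kappa,\l,0)\cV_{\s\s}(0,\kappa,\l,0,\b)$, and the last term vanishes by Observation~\eqref{obs:keycommutator} since $P\cV_{\s\s}=0$, just as the analogous $\cV_{\e\s}$ term did in the proof of Lemma~\ref{lem:GTaylorOES}; so only the two displayed terms survive.

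First I would compute $B_{\s\s}(0,\kappa,\l,0)$. From $L_\s(k,\mu;\s)=kL_k(k,\mu;\s)$ (the remark after \eqref{eq:derivop2}) one gets $\d_\s L_k(k,\mu;\s)=kL_{kk}(k,\mu;\s)$; since the operators $dk\d_\xi$, $D\cN(\tilde u_{\e,\kappa})$ and $\l$ in \eqref{eq:BlochDef} are $\s$-independent, and the convective affine term $i\s\big(kd_*+\tfrac{kk_*d_\e(0,\kappa)}{\kappa}\big)$ is linear in $\s$, it follows that $B_{\s\s}(0,\kappa,\l,0)=k_*^2 L_{kk}(k_*,0;0)$. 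In particular the affine correction distinguishing $B$ from the usual Bloch operator $\tilde B$ makes no contribution at this order, so the $\s^2$-coefficient is formally the same as in the nonconvective case. Evaluating $PB_{\s\s}(0,\kappa,\l,0)\Upsilon_\b$ against the definitions \eqref{eq:derivop2} and the spectral projection, exactly as in \eqref{eq:GTOS3}, gives $PB_{\s\s}(0,\kappa,\l,0)\Upsilon_\b=\tfrac{1}{2}k_*^2\b\,\big(\ell S_{kk}(k_*,0)r\big)\,e^{i\xi}r + c.c.$

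For the mixed term I would use Proposition~\ref{prop:CVExist}(3), namely $\cV_\s(0,\kappa,\l,0,\b)=-T_\l(I-P)B_\s(0,\kappa,\l,0)\Upsilon_\b$. With $B_\s(0,\kappa,\l,0)=k_*L_k(k_*,0;0)+i\big(d_*k_*+\tfrac{k_*^2 d_\e(0,\kappa)}{\kappa}\big)$, applying $(I-P)$ annihilates the scalar affine part since $(I-P)\Upsilon_\b=0$, and the analyticity $T_\l=N+\cO(|\l|)$ then yields $\cV_\s(0,\kappa,\l,0,\b)=-\tfrac{k_*}{2}\b\,N(I-\Pi)S_k(k_*,0)r\,e^{i\xi} + c.c. + \cO(|\l|)$, exactly as in \eqref{eq:PBESES4}. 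Applying $PB_\s(0,\kappa,\l,0)$ to this—again the affine constant drops because $P$ acts on the range of $I-P$, and $B_\s$ carries no nonlinear part—produces an $e^{i\xi}r$-coefficient $-\tfrac{1}{2}k_*^2\b\,\big(\ell S_k(k_*,0)(I-\Pi)N(I-\Pi)S_k(k_*,0)r\big)+\cO(|\l|)$. Adding half of the previous expression, the coefficient of $e^{i\xi}r$ in the whole $\s^2$-term equals $\tfrac{1}{2}k_*^2\,\ell\big(\tfrac{1}{2}S_{kk}(k_*,0)r - S_k(k_*,0)(I-\Pi)N(I-\Pi)S_k(k_*,0)r\big)\b$, which by the spectral identity \eqref{eq:spectralid} is $\tfrac{1}{4}k_*^2\tl_{kk}(k_*,0)\b$. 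Passing to the real coordinates $(\b_1,\b_2)$—so that a coefficient $\tfrac{1}{2}c\,\b$ of $e^{i\xi}r$ reduces to $[[c]]\bp \b_1 \\ \b_2 \ep$, consistently with $\Upsilon_\b\leftrightarrow\bp \b_1 \\ \b_2 \ep$—gives the claimed $[[\tfrac{1}{2}k_*^2\tl_{kk}(k_*,0)]]\bp \b_1 \\ \b_2 \ep$, with the $\cO(|\l|)$ matrix absorbing the $T_\l-T_0$ remainder.

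The computation is routine and runs in close parallel to Lemma~\ref{lem:GTaylorOES}; the one point requiring care is the bookkeeping that assembles $\tfrac{1}{2}PB_{\s\s}\Upsilon_\b$ and $PB_\s\cV_\s$ into precisely the combination $\tfrac{1}{2}S_{kk}(k_*,0)r - S_k(k_*,0)(I-\Pi)N(I-\Pi)S_k(k_*,0)r$ of \eqref{eq:spectralid}—so that the second-order perturbation correction enters with exactly the weight needed to reconstitute $\tl_{kk}(k_*,0)$—together with confirming that the convective affine shift genuinely drops out (it is linear in $\s$) and that no nonlinear or $\mu$-dependent terms appear at order $\s^2$ (there are none in either $B_{\s\s}$ or $B_\s$).
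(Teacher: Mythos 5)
Your proposal is correct and follows essentially the same route as the paper: identify the $\s^2$-coefficient of \eqref{eq:GenPBTaylor}, kill the $\cV_{\s\s}$ term via Observation \ref{obs:keycommutator}, compute $B_{\s\s}(0,\kappa,\l,0)=k_*^2L_{kk}(k_*,0;0)$, use Proposition \ref{prop:CVExist}(3) with the affine terms annihilated by $P$ and $I-P$, and assemble the two contributions via the spectral identity \eqref{eq:spectralid}. If anything, your bookkeeping of the sign and the $k_*^2$ factor in the $PB_\s\cV_\s$ term is more explicit than the paper's display \eqref{eq:OSSCV4}.
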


	\begin{proof}
		We note that this expression is the coefficient of $\s^2$ in \eqref{eq:GenPBTaylor}, because as always the term involving the highest order derivative of $\cV$ vanishes identically.

		We start with the $\Upsilon_\b$ term:
		\begin{equation}
			\frac{1}{2}PB_{\s\s}(0,\kappa,\l,0)\Upsilon_\b=\frac{1}{2}P\left[k_*^2L_{kk}(k_*,0;0)\right]\Upsilon_\b.
		\end{equation}
		Using the definition of the operator $L_{kk}$ in terms of its symbol, we get
		\begin{equation}\label{eq:OSSUp}
			\frac{1}{2}P\left[k_*^2L_{kk}(k_*,0;0)\right]\Upsilon_\b=\frac{1}{2}\b \left[\frac{1}{2}k_*^2\ell S_{kk}(k_*,0)r \right]e^{i\xi}r+c.c.
		\end{equation}

		For the other term, we have
		\begin{equation}\label{eq:OSSCV1}
			PB_\s(0,\kappa,\l,0)\cV_\s(0,\kappa,\l,0,\b)=-PB_\s(0,\kappa,\l,0)(I-P)T_\l(I-P) B_\s(0,\kappa,\l,0)\Upsilon_\b.
		\end{equation}
		Upon expanding the Bloch operators, we get
		\ba\label{eq:OSSCV2}
			P\big[k_*L_k(k_*,0;0)
			&+i\big(d_*k_*+\frac{k_*^2d_\e(0,\kappa)}{\kappa}\big) \big](I-P)T_\l(I-P)\big[k_*L_k(k_*,0;0)
			\\&
			+i\big(d_*k_*+\frac{k_*^2d_\e(0,\kappa)}{\kappa}\big) \big]\Upsilon_\b.
			\ea
		Similaly as in the calculations in the proof of Lemma \eqref{lem:GTaylorOES}, 
		\eqref{eq:OSSCV2} collapses to
		\begin{equation}\label{eq:OSSCV3}
			P\left[k_*L_k(k_*,0;0)\right](I-P)T_\l(I-P)\left[k_*L_k(k_*,0;0)\right]\Upsilon_\b.
		\end{equation}
		Applying the definition of the operators $L_k$ and $T_\l$ and Taylor expanding we get
		\begin{equation}\label{eq:OSSCV4}
			\frac{1}{2}\b\left[\ell S_k(k_*,0)NS_k(k_*,0)r \right]e^{i\xi}r+c.c.+\cO(|\l|).
		\end{equation}
		Combining \eqref{eq:OSSUp} and \eqref{eq:OSSCV4} and applying the spectral 
		identity \eqref{eq:spectralid}, we get the equality claimed in the lemma.
	\end{proof}

	Overall, we've shown the following theorem.

	\begin{theorem}\label{thm:ReducedEqn}
		In reduced form, 
			$PB(\e,\kappa,\l,\s)(\Upsilon_\b+\cV(\e,\kappa,\l,\s,\b))=0$
		is given by
		\begin{equation}\label{eq:GStableRedEqn}
			\left[-\l+2\e^2\a^2 \bp \Re\g & 0 \\ \Im\g & 0 \ep+\s^2[[\frac{1}{2}k_*^2\tl_{kk}(k_*,0)]]-\e\s i[[i\kappa k_*\tl_{kk}(k_*,0)]]+h.o.t.\right]\bp \b_1 \\ \b_2\ep=0.
		\end{equation}
	\end{theorem}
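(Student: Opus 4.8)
The plan is to derive \eqref{eq:GStableRedEqn} by assembling, term by term, the Taylor coefficients already computed in Lemmas~\ref{lem:GTaylorO1}--\ref{lem:GTaylorOSS}. By Proposition~\ref{prop:smallsigma} it suffices to treat $|\s|\ll1$, and by Proposition~\ref{prop:CVExist} the reduction map $\cV(\e,\kappa,\l,\s,\b)$ is smooth near $(\e,\s)=(0,0)$, linear in $\b$, with its values and first $(\e,\s)$-derivatives at $\e=\s=0$ recorded there. So I would expand the reduced equation \eqref{eq:PEqn}, $PB(\e,\kappa,\l,\s)(\Upsilon_\b+\cV)=0$, to joint second order in $(\e,\s)$ --- this is exactly the symbolic expansion \eqref{eq:GenPBTaylor} --- and then read off the coefficient of each monomial $\e^a\s^b$ with $a+b\le2$.

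The collection is then purely a matter of quoting lemmas. The $\e^0\s^0$ coefficient is $PB(0,\kappa,\l,0)\Upsilon_\b=-\l\Upsilon_\b$ (Lemma~\ref{lem:GTaylorO1}); the $\e^1$ and $\s^1$ coefficients vanish identically (Lemmas~\ref{lem:GTaylorOE} and~\ref{lem:GTaylorOS}), these being exactly where $\Re\d_k\tl(k_*,0)=0$, the $d$-equation \eqref{eq:dequation} of the existence reduction, and the particular (shifted) choice of Bloch constant in \eqref{eq:BlochDef} enter; and the three genuine second-order coefficients $\e^2$, $\e\s$, $\s^2$ are given by Lemmas~\ref{lem:GTaylorOEE}, \ref{lem:GTaylorOES}, \ref{lem:GTaylorOSS} respectively, namely (in reduced $2\times2$ form) $2\a^2\bp\Re\g&0\\\Im\g&0\ep$ up to $\cO(\e,|\l|)$, $-i[[i\kappa k_*\tl_{kk}(k_*,0)]]$ up to $\cO(|\l|)$, and $[[\frac{1}{2}k_*^2\tl_{kk}(k_*,0)]]$ up to $\cO(|\l|)$. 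In each second-order coefficient the term carrying the highest $\cV$-derivative ($\cV_{\e\e}$, $\cV_{\e\s}$, $\cV_{\s\s}$) drops out because $PB(0,\kappa,\l,0)=-\l P$ by Observation~\ref{obs:keycommutator} and those $\cV$-derivatives lie in $\ker P$. Substituting back into \eqref{eq:GenPBTaylor}, and absorbing the Taylor remainder $\cO(\e^3,\e^2\s,\e\s^2,\s^3)$ together with the $\cO(\e^2|\l|,\e\s|\l|,\s^2|\l|)$ errors into the single symbol $h.o.t.$, gives precisely \eqref{eq:GStableRedEqn}.

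Most of the real work is thus inside the individual lemmas, so the only genuine obstacle at the level of Theorem~\ref{thm:ReducedEqn} is the bookkeeping of passing from the complex ``$e^{i\xi}r$-coefficient'' normal form used in the lemmas to the honest real $2\times2$ eigenvalue problem in $(\b_1,\b_2)$: one must track the universal $\frac{1}{2}$ factors and the non-holomorphic $\bar\b$-dependence, the latter collapsing via $\b+\bar\b=2\b_1$ to the rank-one matrix with columns $2\a^2(\Re\g,\Im\g)^T$ and $0$ that appears in \eqref{eq:GStableRedEqn}. A secondary point is that, unlike the co-periodic case (where Lemma~\ref{lem:corefined} exploited a companion \emph{existence} problem), there is no existence problem at $\s\ne0$; the clean $\e^2$-coefficient is therefore imported through Lemma~\ref{lem:GTaylorOEE}, which reduces it to the co-periodic computation and hence still rests on the $\cO(\e^2)$ part of the Lyapunov--Schmidt relation \eqref{eq:LSreduced}. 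Once \eqref{eq:GStableRedEqn} is in hand it is a $2\times2$ matrix whose eigenvalue expansion is read off in the following sections to yield the spectral decomposition of Theorem~\ref{main2}.
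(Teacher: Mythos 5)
Your proposal is correct and follows essentially the same route as the paper, which presents Theorem \ref{thm:ReducedEqn} as the direct assembly of the Taylor expansion \eqref{eq:GenPBTaylor} with coefficients supplied by Lemmas \ref{lem:GTaylorO1}--\ref{lem:GTaylorOSS}, the highest $\cV$-derivatives killed by Observation \ref{obs:keycommutator}, and the complex $e^{i\xi}r$-coefficients recast as the real $2\times2$ system in $(\b_1,\b_2)$. Nothing essential is missing.
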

	\begin{remark}\label{rem:BlochDiff}
		If one instead looks at the usual Bloch operator
		\begin{equation*}
			\tilde{B}(\e,\kappa,\s,\l)=L(k,\mu;\s)+dk\d_\xi+i\s dk+D\cN(\tilde{u}_{\e,\kappa})-\l
		\end{equation*}
		one has to add a term $-i\s(1+\e)\frac{k_*^2d_\e(0,\kappa)}{\kappa}$ to \eqref{eq:GStableRedEqn}, which is morally the same term that needed to be added to $\Hx$ in the derivation of complex Ginzburg-Landau \cite{WZ}.
	\end{remark}

	In order to solve \eqref{eq:GStableRedEqn} for $\l=\l(\e,\kappa,\s)$, we rewrite it as
	\begin{equation}\label{eq:GStableRedEqnV2}
	\begin{split}
		0=m(\e,\kappa,\s,\l)\bp \b_1\\ \b_e\ep=[-\l+M(\e,\kappa,\s)+\cE(\e,\kappa,\s)\l+\cO(\e^2,\e\s,\s^2)\cO(|\l|^2)]\bp \b_1\\ \b_2\ep,
	\end{split}
	\end{equation}
	where
		\ba\label{eq:bigM}
		M(\e,\kappa,\s)&:=2\e^2\a^2 \bp \Re\g & 0 \\ \Im\g & 0 \ep+\s^2[[\frac{1}{2}k_*^2\tl_{kk}(k_*,0)]]+\e\s i[[2i\kappa k_*\tl_{kk}(k_*,0)]]\\
		&\quad +\cO(\e^3,\e^2\s,\e\s^2,\s^3)
		\ea
	and $||\cE(\e,\kappa,\s)||\leq C||M(\e,\kappa,\s)||=\cO(\e^2,\e\s,\s^2)$ for some constant $C$. Here $\cE(\e,\kappa,\s )$ comes from the higher order terms in \eqref{eq:GStableRedEqnV2}. The precise form of $\cE$ is unneeded, but it is in principle computable by a more careful analysis of the $\cO(\l)$ terms in lemmas \eqref{lem:GTaylorOEE},\eqref{lem:GTaylorOES} and \eqref{lem:GTaylorOSS}. 

	Observe from Theorem \eqref{thm:copstab} that at $\s=0$, we know that $\det(m(\e,\kappa,0,0))=0$ for all $\e,\kappa$. We compute 
	\begin{equation}
		\frac{\d}{\d\l}\det(m(\e,\kappa,\s,\l))=-[M_{11}(\e,\kappa,\s)+M_{22}(\e,\kappa,\s)+\cO(\e^4,\e^3\s,\e^2\s^2,\e\s^3,\s^4) ]+\cO(\l),
	\end{equation}
	where $M_{ij}$ refers to the $ij$-th entry of the matrix $M(\e,\kappa,\s)$ defined in \eqref{eq:bigM}. 
	In particular, at $\s=0$, we see that $\frac{\d}{\d\l}\det(m(\e,\kappa,0,\l))|_{\l=0}=2\a^2\e^2\Re\g+\cO(\e^3)$. 

	To avoid pathological behavior at $\e=0$, we rescale $\l=\e^2\hal$ and $\s=\e\Hs$. Dividing out $\e^2$ in \eqref{eq:GStableRedEqnV2} and redoing the above calculation gives
	\begin{equation*}
		\frac{\d}{\d\hal}\det\frac{m(\e,\kappa,0,\l)}{\e^2}=\cO(1).
	\end{equation*}
	From here, we may safely apply the implicit function theorem to conclude $\hal=\hal(\e,\kappa,\Hs)$ with $\hal(\e,\kappa,0)\equiv0$. Undoing the scaling, we see that $\l$ admits the expansion
	\begin{equation}\label{eq:lambdaexp}
		\l(\e,\kappa,\s)=c_1(\e,\kappa)\s+c_2(\e,\kappa)\s^2+\cO(\s^3),
	\end{equation}
	where $c_1,c_2$ smoothly depend on $\e,\kappa$, moreover, they admit a smooth extension to $\e=0$. Because we've forced $\s\sim\e$ and $\l\sim\e^2$, the error term $\cE(\e,\kappa,\s)\l\sim\e^4$ and so it's a negligible error term. 
	Letting $\Lambda(\e,\kappa,\s)$ be the eigenvalue of $M(\e,\kappa,\s)$ satisfying $\L(\e,\kappa,0)\equiv 0$, then we morally should have $\l(\e,\kappa,\s)\approx\L(\e,\kappa,\s)$. Before we compute $\l$, 
	we show that $\L$ matches the prediction of (cGL).

	\begin{proposition}\label{eq:Lambda}
		Let $\L(\e,\kappa,\s)=C_1(\e,\kappa)\s+C_2(\e,\kappa)\s^2+\cO(\s^3)$ be an eigenvalue of $M(\e,\kappa,\s)$. Then we have
		\begin{equation}\label{eq:bigC1}
			C_1(\e,\kappa)=2i\kappa k_*\e\left(\frac{\Im\g\Re\tl_{kk}(k_*,0)}{\Re\g}-\Im\tl_{kk}(k_*,0) \right)+\cO(\e^2)
		\end{equation}
		and
		\ba\label{eq:bigC2}
			C_2(\e,\kappa)&=\frac{2\kappa^2 k_*^2}{\a^2\Re\g}\big((\Re\tl_{kk}(k_*,0))^2+\frac{\Re\tl_{kk}(k_*,0)\Im\tl_{kk}(k_*,0)\Im\g } {\Re\g}+\frac{\Im\g}{\Re\g}\big(\Im\tl_{kk}(k_*,0)\\
			&\quad +\frac{\Im\g\Re\tl_{kk}(k_*,0) }{\Re\g} \big) \big)+\cO(\e).
			\ea
		That is, $\L$ agrees with the complex Ginzburg-Landau approximation to the appropriate lowest order in $\e$.
	\end{proposition}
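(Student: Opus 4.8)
The plan is to compute the eigenvalue $\L$ of the explicit $2\times 2$ matrix $M(\e,\kappa,\s)$ of \eqref{eq:bigM} by elementary linear algebra and then to recognize the resulting coefficients as the rescaled coefficients of the critical cGL eigenvalue in \eqref{cGLlambdas}. First I would pass to the Ginzburg--Landau scaling already imposed around \eqref{eq:lambdaexp}: setting $\s=\e\Hs$, the definition \eqref{eq:bigM} gives $M(\e,\kappa,\e\Hs)=\e^2\hat M(\e,\kappa,\Hs)$ with
\begin{align*}
	\hat M(\e,\kappa,\Hs)&=2\a^2\bp \Re\g & 0\\ \Im\g & 0\ep+\Hs\, i[[2i\kappa k_*\tl_{kk}(k_*,0)]]\\
	&\quad +\Hs^2[[\tfrac12 k_*^2\tl_{kk}(k_*,0)]]+\cO(\e),
\end{align*}
a matrix depending smoothly on $(\e,\kappa,\Hs)$ up to $\e=0$ --- this smoothness is exactly what the rescaling was designed to provide, and is what makes the $\cO(\e)$ remainders in \eqref{eq:bigC1}--\eqref{eq:bigC2} meaningful. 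At $\Hs=0$ the leading matrix $\hat M(0,\kappa,0)$ is rank one, with eigenvalues $0$ and $2\a^2\Re\g\neq 0$ (here $\Re\g<0$ by supercriticality); the eigenvalue $\L$ of the statement is the simple one continuing $0$, which has right and left null vectors $v_0=(0,1)^{\mathrm T}$ and $w_0=(-\Im\g/\Re\g,\,1)$, so analytic perturbation theory applies.

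Since $\hat M$ is $2\times2$, I would avoid the abstract reduced--resolvent formalism and work directly with the characteristic equation $\hat\L^{2}-(\tr\hat M)\hat\L+\det\hat M=0$. Because $\tr\hat M=2\a^2\Re\g+\cO(\Hs)+\cO(\e)$ stays bounded away from zero, the root continuing $\hat\L|_{\Hs=0}=0$ is given by the convergent series
\begin{equation*}
	\hat\L=\frac{\det\hat M}{\tr\hat M}+\frac{(\det\hat M)^{2}}{(\tr\hat M)^{3}}+\cO\Big(\big(\tfrac{\det\hat M}{\tr\hat M}\big)^{3}\Big).
\end{equation*}
Using $\tr[[z]]=2\Re z$, $\det[[z]]=|z|^{2}$, the polarization identity $\det(X+Y)=\det X+\det Y+\tr X\,\tr Y-\tr(XY)$ for $2\times2$ matrices, and $\det\hat M|_{\Hs=0}=0$, one evaluates $\tr\hat M=t_0+t_1\Hs+t_2\Hs^2+\cdots$ and $\det\hat M=d_1\Hs+d_2\Hs^2+\cdots$ as polynomials in $\Hs$ to leading order in $\e$ (the entries coming from the three matrices above), and reads off $\hat\L=\tfrac{d_1}{t_0}\Hs+\big(\tfrac{d_2}{t_0}-\tfrac{d_1 t_1}{t_0^{2}}+\tfrac{d_1^{2}}{t_0^{3}}\big)\Hs^{2}+\cO(\Hs^{3})$. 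Restoring $\L=\e^{2}\hat\L$, $\s=\e\Hs$ then gives $C_1=\e\,d_1/t_0+\cO(\e^2)$, which simplifies to \eqref{eq:bigC1}, and $C_2=\tfrac{d_2}{t_0}-\tfrac{d_1 t_1}{t_0^{2}}+\tfrac{d_1^{2}}{t_0^{3}}+\cO(\e)$, which simplifies to \eqref{eq:bigC2}. (Equivalently one could use the standard second-order perturbation formula $C_2=w_0\hat M_2 v_0+w_0\hat M_1 v_1$ with $v_1$ the reduced-resolvent correction; both routes are the same finite computation.)

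Finally I would identify the outcome with the cGL prediction: under the correspondence $a=-\tfrac12\tl_{kk}(k_*,0)$, $b=\tl_\mu(k_*,0)$, $c=\g$ dictated by \eqref{eq:cGL}, the amplitude relation $\a^{2}=(\Re a\,\kappa^2-\Re b)/\Re c$ from \eqref{eq:CGLExistence}, and the rescaling relating the Floquet number $\s$ (in the $\xi$-frame) and time to their Ginzburg--Landau counterparts, the expressions \eqref{eq:bigC1}--\eqref{eq:bigC2} are term-by-term the rescaled coefficients of $\tl_2$ in \eqref{cGLlambdas}; in particular $C_1$ is purely imaginary (pure transport, reflecting the convective nature of the bifurcation) while $C_2$ is real. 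I expect no conceptual obstacle here --- once the reduced matrix \eqref{eq:bigM} is in hand the proposition is a finite computation --- so the only place genuine care is needed is the bookkeeping of the $\Re/\Im$ combinations of $\g$ and $\tl_{kk}(k_*,0)$ through the $[[\cdot]]$-formalism, the observation that $\det\hat M$ feeds into the $\Hs^2$ coefficient of $\hat\L$ not only through $d_2$ but also through $d_1^{2}$ and $d_1 t_1$, and a careful tracking of the Floquet/time rescaling constants relating this section's variables to those of Section \ref{sec:cglstability}.
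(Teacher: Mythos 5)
Your proposal is correct and is essentially the computation the paper performs: both reduce the proposition to a finite perturbation expansion of the simple eigenvalue of the explicit $2\times2$ matrix $M(\e,\kappa,\s)$ of \eqref{eq:bigM} branching from $0$ at $\s=0$, after the rescaling $\s=\e\Hs$, $\L=\e^2\hat\L$ that makes the reduced matrix smooth down to $\e=0$. The only difference is packaging — you expand the small root of the characteristic polynomial as $\det\hat M/\tr\hat M+(\det\hat M)^2/(\tr\hat M)^3+\cdots$, whereas the paper writes down the left/right (null and non-null) eigenvectors of $M(\e,\kappa,0)$ and invokes the first- and second-order spectral perturbation identities — and these are the same finite computation for a $2\times2$ matrix (your $d_1^2/t_0^3$ and $d_1t_1/t_0^2$ terms are exactly the reduced-resolvent correction carried by $L^\perp,R^\perp$ in the paper's version).
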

	\begin{proof}
		First, let $L,R$ be the vectors spanning the left/right kernel of $M(\e,\kappa,0)$:
		\begin{equation}
			L=\bp -\frac{\Im\g}{\Re\g} & 1 \ep+\cO(\e) \quad\quad R=\bp 0 \\ 1 \ep+\cO(\e),
		\end{equation}
		and $L^\perp, R^\perp$ be the left/right eigenvectors associated to the nonzero eigenvalue of $M(\e,\kappa,0)$ given by
		\begin{equation}
			L^\perp=\bp 1 & 0 \ep+\cO(\e) \quad \quad R^\perp=\bp 1\\ \frac {\Im\g} {\Re\g} \ep+\cO(\e).
		\end{equation}
		From here, it is a straightforward computation using the spectral identities with $\e,\kappa$ fixed to evaluate $C_1,C_2$.
	\end{proof}

	It is important that the $\cO(\s)$ term have no real part for either $\l$ or $\L$.

	\begin{lemma}\label{lem:Re0}
		$\Re c_1(\e,\kappa)=\Re C_1(\e,\kappa)=0$ for all $\e,\kappa$.
	\end{lemma}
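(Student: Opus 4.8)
The plan is to derive both equalities from the single complex‑conjugation symmetry that is forced by the fact that \eqref{std} is a real equation: this symmetry pins down the critical spectral curve as an \emph{even} function of $\s$ up to conjugation, so that its linear‑in‑$\s$ coefficient must be purely imaginary.

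First I would record the symmetry of the Bloch operator \eqref{eq:BlochDef}. Since $L$ and $\cN$ have real coefficients, $\tilde u_{\e,\kappa}$ is real‑valued, the constants $d$, $d_*$, $d_\e(0,\kappa)$ are real, and $\overline{S(z,\mu)}=S(-z,\mu)$ for real $z$, a termwise computation gives
\[
\overline{B(\e,\kappa,\l,\s)\,U}\;=\;B(\e,\kappa,\bar\l,-\s)\,\bar U .
\]
The only two points to check are that the Fourier multiplier $L(k,\mu;\s)$ conjugates to $L(k,\mu;-\s)$ (relabel Fourier modes using $\overline{S(k(\eta+\s),\mu)}=S(-k(\eta+\s),\mu)$), and that the term $i\s\bigl(kd_*+\frac{kk_*d_\e(0,\kappa)}{\kappa}\bigr)$ changes sign because its coefficient is real. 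Next I would transport this through the Lyapunov–Schmidt reduction. Since $P$ is a real projection and $\overline{\Upsilon_\b}=\Upsilon_\b$ (because $\Upsilon_\b=\Re(\b e^{i\xi}r)$ is a real‑valued function for each fixed $\b$), applying the bar to the equation defining $\cV$ and invoking the uniqueness in Proposition \ref{prop:CVExist} yields $\overline{\cV(\e,\kappa,\l,\s,\b)}=\cV(\e,\kappa,\bar\l,-\s,\b)$; doing the same to $PB(\e,\kappa,\l,\s)(\Upsilon_\b+\cV)=0$ and reading off coordinates in the basis $\{\Upsilon_1,\Upsilon_i\}$ of $\ran P$ — all of whose elements are real‑valued, so the bar just conjugates coefficients — gives $\overline{m(\e,\kappa,\s,\l)}=m(\e,\kappa,-\s,\bar\l)$ for the reduced $2\times2$ matrix of \eqref{eq:GStableRedEqnV2}. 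Setting $\l=0$ and recalling $M(\e,\kappa,\s)=m(\e,\kappa,\s,0)$ gives $\overline{M(\e,\kappa,\s)}=M(\e,\kappa,-\s)$, and in particular that $M(\e,\kappa,0)$ is a real matrix.

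With the symmetry in hand I would work in the rescaled variables $\l=\e^2\hal$, $\s=\e\Hs$, in which (by Theorem \ref{thm:copstab} together with the implicit‑function step around \eqref{eq:lambdaexp}) the critical root $\hal(\e,\kappa,\Hs)$ of $\det m=0$ — equivalently the eigenvalue $\hat\L(\e,\kappa,\Hs)$ of $M$ — is, uniformly for small $\e$ and $\Hs$ in a fixed interval, the unique eigenvalue in a fixed neighborhood of $0$, the companion eigenvalue sitting near the fixed negative number $2\a^2\Re\g$. Conjugating $\det m(\e,\kappa,\s,\l)=0$ (resp. the eigenvalue relation for $M$) and using the symmetries of Step~2 shows $\overline{\hal(\e,\kappa,-\Hs)}$ (resp. $\overline{\hat\L(\e,\kappa,-\Hs)}$) is again a critical eigenvalue at Floquet parameter $\Hs$, hence equals $\hal(\e,\kappa,\Hs)$ (resp. $\hat\L(\e,\kappa,\Hs)$); undoing the scaling, $\overline{\l(\e,\kappa,-\s)}=\l(\e,\kappa,\s)$ and $\overline{\L(\e,\kappa,-\s)}=\L(\e,\kappa,\s)$. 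Substituting $\l=c_1\s+c_2\s^2+\cO(\s^3)$ and $\L=C_1\s+C_2\s^2+\cO(\s^3)$ (no constant terms, since both curves vanish at $\s=0$) and matching the coefficient of $\s$ then gives $c_1=-\overline{c_1}$ and $C_1=-\overline{C_1}$, i.e. $\Re c_1(\e,\kappa)=\Re C_1(\e,\kappa)=0$; the $\e=0$ case follows from the smooth extension to $\e=0$ noted in \eqref{eq:lambdaexp} and \eqref{eq:bigC1}. (Matching the $\s^2$ coefficient incidentally re‑derives that $c_2$ and $C_2$ are real, consistent with \eqref{eq:bigC2}.)

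The routine parts are the termwise conjugation check and the final Taylor match; I expect the only genuine care to be needed (i) in the basis bookkeeping that makes the reduced matrix transform by plain conjugation $m\mapsto\overline{m}$ rather than by a conjugation twisted by a change of basis — this is exactly where $\overline{\Upsilon_\b}=\Upsilon_\b$ is essential — and (ii) in arranging, via the $(\e^2,\e)$–rescaling, that the critical eigenvalue stays uniformly separated from its companion near $0$, so that the conjugation symmetry genuinely fixes the critical curve rather than merely permuting the two small eigenvalues.
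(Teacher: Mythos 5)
Your proposal is correct and follows essentially the same route as the paper: the paper's proof introduces the conjugation operator $RW:=\bar W$, proves exactly the symmetry $B(\e,\kappa,\s,\l)RW=RB(\e,\kappa,-\s,\bar\l)W$ via the reality of the symbol, pushes it through to $\bar m(\e,\kappa,-\s,\bar\l)=m(\e,\kappa,\s,\l)$ and $\bar M(\e,\kappa,-\s)=M(\e,\kappa,\s)$, and concludes $\bar\l(\e,\kappa,-\s)=\l(\e,\kappa,\s)$, $\bar\L(\e,\kappa,-\s)=\L(\e,\kappa,\s)$ by Taylor expansion. Your added care about $\overline{\Upsilon_\b}=\Upsilon_\b$ and about the critical eigenvalue staying separated from its companion (so the symmetry fixes the curve rather than swapping the two small roots) is a reasonable tightening of details the paper leaves implicit, but it is not a different argument.
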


	\begin{proof}
		Let $RW(\xi):=\bar{W}(\xi)$. Then we have the following claim.

		\begin{claim}
			For all $\e,\kappa,\s,\l$ and all $W$,
			\begin{equation}\label{eq:RSym}
				B(\e,\kappa,\s,\l)RW=RB(\e,\kappa,-\s,\bar{\l})W.
			\end{equation}
		\end{claim}

		To prove the claim, recall that $B(\e,\kappa,\s,\l)=L(k,\mu;\s)+dk\d_\xi+iC(\e,\kappa)\s+D\cN(\tilde{u}_{\e,\kappa})-\l$ for some real constant $C(\e,\kappa)$. In particular, the only nontrivial part of the claim is to show that $RL(k,\mu;\s)W=L(k,\mu;-\s)RW$. This can be done as follows.
		Assuming that $W$ is real valued,
		\ba
		RL(k,\mu;\s)W&=\overline{\sum_{\eta\in\ZZ}S(k(\eta+\s),\mu)\hat{W}(\eta)e^{i\eta\xi}}\\
		&=
	\sum_{\eta\in\ZZ}S(k(-\eta-\s),\mu)\hat{W}(-\eta)e^{-i\eta\xi}=L(k,\mu;-\s)W,
			\ea
		where we've used the reality condition on $L(k,\mu)$ and $W$. Splitting $W=W_R+iW_I$ where $W_R,W_I$ are real valued, we get by applying the above identity and linearity
		\ba
			RL(k,\mu;\s)W&=RL(k,\mu;\s)W_R-iRL(k,\mu;\s)W_I\\
			&=
			L(k,\mu;-\s)W_R-iL(k,\mu;-\s)W_I=L(k,\mu;-\s)(W_R-iW_I)=L(k,\mu;-\s)RW.
		\ea

		Following \cite{SZJV}, we 
		observe that the reduced equation has the same symmetry, and since $M(\e,\kappa,\s)=m(\e,\kappa,\s,0)$ it follows that $M$ inherits the symmetry as well. In particular for all $\b_1,\b_2$, we have
		\begin{equation*}
			\bar{m}(\e,\kappa,-\s,\bar{\l})\bp \b_1 \\ \b_2\ep=m(\e,\kappa,\s,\l)\bp \b_1\\ \b_2\ep
		\end{equation*}
		and
		\begin{equation*}
			\bar{M}(\e,\kappa,-\s)\bp \b_1\\ \b_2\ep=M(\e,\kappa,\s)\bp \b_1\\ \b_2\ep.
		\end{equation*}
		From these identities we conclude that $\bar{\l}(\e,\kappa,-\s)=\l(\e,\kappa,\s)$ 
		and $\bar{\L}(\e,\kappa,-\s)=\L(\e,\kappa,\s)$, whence Taylor expanding with respect to $\s$
		then proves the lemma.
	\end{proof}

	\begin{remark}
		In $O(2)$ invariant systems, e.g. the Brusselator model of \cite{SZJV}, 
		one has an extra symmetry $R_1W(\xi):=W(-\xi)$. This extra symmetry can be used to show that the matrix entries of $m$ are either even or odd with respect to $\s$ depending on whether they are diagonal or off-diagonal respectively. The symmetry we've used here is referred to as $R_2$ 
		in \cite{SZJV}. Notice that the $R_2$ symmetry is the only one that can be guaranteed for all systems we are considering here because it is a manifestation of the assumption that $L(k,\mu)$ maps real functions to real functions.
	\end{remark}

	\begin{theorem}\label{thm:specagree}
		There holds
		$\l(\e,\kappa,\s)=\L(\e,\kappa,\s)+\cO(\e^4,\e^3\s,\e^2\s^2,\e\s^3,\s^4 )$, hence the predictions of complex Ginzburg-Landau hold to lowest order.
	\end{theorem}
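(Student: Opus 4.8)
The plan is to realize both $\l(\e,\kappa,\s)$ and $\L(\e,\kappa,\s)$ as the small-root branches through $(\s,\l)=(0,0)$ of two determinant equations that differ only by a term of strictly higher order in the joint $(\e,\s)$-scaling, and then invoke simplicity of that root. Recall that $\l$ is the branch of $\det m(\e,\kappa,\s,\l)=0$ through the origin, well defined because $\det m(\e,\kappa,0,0)=0$ for all $\e,\kappa$ (Theorem \ref{thm:copstab}), while $\L$ is the eigenvalue of $M(\e,\kappa,\s)$ through the origin, i.e.\ the corresponding branch of $\det(M(\e,\kappa,\s)-\L I)=0$. By \eqref{eq:GStableRedEqnV2}, $m=(M-\l I)+\cE\l+\cO(\e^2,\e\s,\s^2)|\l|^2$ with $\|\cE\|\le C\|M\|=\cO(\e^2,\e\s,\s^2)$. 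Imposing the Ginzburg--Landau scaling $\s=\e\Hs$, $\l=\e^2\hal$ with $|\Hs|\le\Hs_0$ fixed, one has $M(\e,\kappa,\e\Hs)=\e^2\tilde M(\e,\kappa,\Hs)$ with $\tilde M$ polynomial in $\Hs$ and bounded uniformly in $\e,\kappa$; hence $\cE\l=\cO(\e^2)\cdot\e^2\hal=\cO(\e^4)$ and the quadratic remainder is $\cO(\e^6)$, both a full power of $\e$ below $M-\l I=\e^2(\tilde M-\hal I)=\cO(\e^2)$. Dividing by $\e^2$, the equation for $\hal:=\l/\e^2$ becomes $\det(\tilde M(\e,\kappa,\Hs)-\hal I+\cO(\e^2))=0$, with the $\cO(\e^2)$ matrix uniform for $\hal$ in a fixed ball, while $\hat{\L}:=\L/\e^2$ solves the same equation with the $\cO(\e^2)$ term removed.

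Next I would exploit simplicity of the small eigenvalue. At $\Hs=0$ the matrix $\tilde M(\e,\kappa,0)$ has eigenvalues $0$ and $2\a^2\Re\g$, the latter bounded away from zero by supercriticality $\Re\g<0$ together with the lower bound $\a\ge c(\nu_0)>0$ valid on the existence range $\kappa^2\le(1-\nu_0)\kappa_E^2$ of Proposition \ref{mainLS}; consequently, for $|\Hs|$ and $\e$ small the two eigenvalues of $\tilde M(\e,\kappa,\Hs)$ stay separated by a fixed positive amount, so $\hat{\L}(\e,\kappa,\Hs)$ is a simple root of the characteristic polynomial with $\partial_{\hat{\L}}\det(\tilde M-\hat{\L}I)=\hat{\L}-\hat{\L}_{\mathrm{big}}$ bounded away from $0$. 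Since $\det$ is polynomial in the matrix entries and $\tilde M$ is bounded, an $\cO(\e^2)$ change of the entries perturbs $\det(\tilde M-zI)$ by $\cO(\e^2)$ uniformly for bounded $z$; writing $F$, $G$ for the perturbed and unperturbed determinants, $G(\hal)=G(\hal)-F(\hal)=\cO(\e^2)$ at the root $\hal$, while $|G(\hal)-G(\hat{\L})|\ge c\,|\hal-\hat{\L}|$ as long as $\hal$ stays near $\hat{\L}$ — which it does, both branches vanishing at $\Hs=0$ — so $\hal-\hat{\L}=\cO(\e^2)$, uniformly in $|\Hs|\le\Hs_0$, in $\kappa$ on the above range, and in small $\e$. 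Undoing the scaling gives $\l(\e,\kappa,\s)-\L(\e,\kappa,\s)=\e^2(\hal-\hat{\L})=\cO(\e^4)$, which in the operative regime $\s\sim\e$ is exactly $\cO(\e^4,\e^3\s,\e^2\s^2,\e\s^3,\s^4)$ (each such monomial being $\cO((\e^2+\s^2)^2)$); together with Proposition \ref{eq:Lambda}, which supplies $C_1,C_2$, this shows that $\l$ agrees with the cGL prediction to lowest order. The same comparison, run without rescaling and using that the eigenvalue gap of $M(\e,\kappa,\s)$ is $\gtrsim\e^2+\s^2$ while the extra terms contribute $\cO((\e^2+\s^2)^3)$ to $\det$, yields the estimate uniformly for $|\s|\le\s_0$.

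I expect the only real obstacle to be the uniform bookkeeping of the two small parameters: one must verify that the $\cO(\e^2,\e\s,\s^2)$ bounds on $\cE$ and on the quadratic-in-$\l$ remainder — inherited from the $\cO(|\l|)$ error terms in Lemmas \ref{lem:GTaylorOEE}, \ref{lem:GTaylorOES}, \ref{lem:GTaylorOSS} — genuinely produce, after the substitution $\s=\e\Hs$, $\l=\e^2\hal$, an error strictly lower order than $M$ itself, and that the eigenvalue separation of $M$ degrades only through the already-excluded vanishing of $\a$ as $\kappa^2\uparrow\kappa_E^2$, which is precisely where supercriticality $\Re\g<0$ enters. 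Beyond this, the argument reduces to the elementary fact that a simple root of a quadratic moves Lipschitz-continuously under perturbation of its coefficients, together with the reductions already in hand (Propositions \ref{prop:CVExist} and \ref{prop:smallsigma}, Theorem \ref{thm:copstab}) and the explicit leading coefficients of Proposition \ref{eq:Lambda}.
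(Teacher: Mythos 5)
Your proposal is correct and shares the paper's overall strategy: both arguments compare the exact reduced equation $\det m(\e,\kappa,\s,\l)=0$ of \eqref{eq:GStableRedEqnV2} with the model problem $\det(M(\e,\kappa,\s)-\l I)=0$ and observe that, since $\cE$ and $M$ are each $\cO(\e^2,\e\s,\s^2)$, the two problems differ by $\cO(\e^4,\e^3\s,\e^2\s^2,\e\s^3,\s^4)$. Where you genuinely diverge is in the last step. The paper inverts $I-\cE$ by a Neumann series, so that $\l$ is (up to $\cO(|\l|^2)$ corrections) an eigenvalue of $(I-\cE)^{-1}M=M+\cE M+\dots$, and then concludes ``by matching orders'' that the eigenvalues move by the size of the perturbation; that final inference is precisely the eigenvalue-perturbation estimate your argument makes explicit. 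You supply it via simplicity of the small root: after the rescaling $\s=\e\Hs$, $\l=\e^2\hal$, the matrix $\e^{-2}M$ has eigenvalues $0$ and $2\a^2\Re\g$ at $\Hs=0$, uniformly separated on $\kappa^2\le(1-\nu_0)\kappa_E^2$ by supercriticality and the lower bound on $\a$, so $\partial_{\hal}\det$ is bounded below along the small branch and the root moves Lipschitz-continuously under the $\cO(\e^2)$ perturbation of the characteristic polynomial. This is a real gain in rigor --- without a spectral gap, a $2\times 2$ eigenvalue may move like the square root of the perturbation, which would destroy the quartic agreement. The one soft spot is your closing remark extending the estimate to all $|\s|\le\s_0$ without rescaling: the claimed gap $\gtrsim\e^2+\s^2$ for $M(\e,\kappa,\s)$ can fail when $\s\gg\e$, since at $\e=0$ the eigenvalues of $\s^2[[\tfrac12 k_*^2\tl_{kk}(k_*,0)]]$ are $z$ and $\bar z$, which coincide to leading order whenever $\Im\tl_{kk}(k_*,0)=0$ (e.g.\ the $O(2)$ case). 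That regime is equally unaddressed by the paper's ``matching orders,'' and is harmless for the application (Theorem \ref{thm:genstab} uses the comparison only for $|\Hs|\le 1/C$), but strictly speaking neither proof covers the full range of $(\e,\s)$ implicit in the statement.
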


	\begin{proof}
		Observe that \eqref{eq:GStableRedEqnV2} is equivalent to
		\begin{equation}
			((I-\cE(\e,\kappa,\s))\l+\cO(\e^2|\l|^2,\e\s|\l|^2,\s^2|\l|^2))\bp \b_1\\ \b_2\ep=M(\e,\kappa,\s)\bp \b_1\\ \b_2\ep
		\end{equation}
		for some nonzero $\b_1+i\b_2$. Since $\cE(\e,\kappa,\s)=\cO(\e^2,\e\s,\s^2)$, it follows that $I-\cE(\e,\kappa,\s)$ is invertible for small $\e,\s$. So, we see that $\l$ is an eigenvalue of $(I-\cE(\e,\kappa,\s))^{-1}M(\e,\kappa,\s)$ up to quadratic errors in $\l$. We can expand $(I-\cE)^{-1}$ into its Neumann series to find
		\begin{equation}\label{eq:NeumannSeries}
			(\l+\cO(\e^2|\l|^2,\e\s|\l|^2,\s^2|\l|^2))\bp \b_1\\ \b_2\ep=\left(\sum_{n=0}^\infty\cE(\e,\kappa,\s)^nM(\e,\kappa,\s)  \right)\bp \b_1\\ \b_2\ep.
		\end{equation}
		But we also have that both $\cE(\e,\kappa,\s),M(\e,\kappa,\s)$ are $\cO(\e^2,\e\s,\s^2)$, so 
		$$
		\cE(\e,\kappa,\s)M(\e,\kappa,\s)=\cO(\e^4,\e^3\s,\e^2\s^2,\e\s^3,\s^4).
		$$
		A priori we know that $\l=\cO(\s)$, so in particular the quadratic errors in \eqref{eq:NeumannSeries} are at least as small as the error bound given by $\cE(\e,\kappa,\s)M(\e,\kappa,\s)$. Hence, we have by matching orders of $\e,\s$ that $\l(\e,\kappa,\s)=\L(\e,\kappa,\s)+\cO(\e^4,\e^3\s,\e^2\s^2,\e\s^3,\s^4)$.
	\end{proof}

	\begin{remark}
		Somewhat surprisingly, we have quartic agreement between $\l$ and $\L$. From this we conclude that $\cE(\e,\kappa,\s)\l$ is essentially a negligible error term. The usefulness of theorem \eqref{thm:specagree} is unfortunately somewhat tempered by the observation that $\L$ is not known to high enough order to make full use of the result.

		The argument presented here is closely related to the argument presented in 
		\cite{SZJV}, the main difference being that we here perform the Weierstrass preparation step of
		\cite{SZJV}``manually'' and more importantly we replace the quadratic formula approach with a spectral perturbation theoretic argument. This is important because if one starts to add conservation laws to the system, as
		described in the open problem portion of Section \ref{s:disc}, 
		then the size of the matrix $m(\e,\kappa,\s,\l)$ increases and we lose explicit formulas to work with.
	\end{remark}

	In order to show how theorem \eqref{thm:specagree} implies the general stability result, we split $\Hs$ into 
	regions 
	\begin{enumerate}
		\item $|\Hs|\leq\frac{1}{C}$
		\item $\frac{1}{C}\leq|\Hs|\leq C$
		\item $|\Hs|\geq C$
	\end{enumerate}
	depending on a large fixed constant $C$ and work with the rescaled quantities $\l=\e^2\hal$, $\s=\e\Hs$

	\begin{theorem}\label{thm:genstab}[General Stability]
		For any $\nu_0>0$, there exists an $\e_0>0$ so that for all $\e<\e_0$ and $|\kappa|\leq \kappa_E$, $\tilde{u}_{\e\kappa}$ is linearly stable 
		if
		$\kappa^2\leq (1-\nu_0)\kappa_S^2$,
		and linearly unstable if
		$\kappa^2\geq(1+\nu_0)\kappa_S^2$, 
		where $\kappa_E$ and $\kappa_S$ are as defined in Section \ref{sec:cglstability}. 
	\end{theorem}

	\begin{proof}
		In region 1, where $|\Hs|\leq\frac{1}{C}$, we use the expansion $\Re\hal(\e,\kappa,\s)=C_2(\e,\kappa)\Hs^2+\cO(\e,|\Hs|^3)$ by lemma \eqref{lem:Re0}. Since $\e$ and $\Hs$ are small we conclude that $\Re\hal(\e,\kappa,\Hs)<0$ whenever $\kappa^2\leq (1-O(\e))\kappa_S^2$ and $\Re\hal(\e,\kappa,\Hs)>0$ for $\kappa^2>(1+O(\e))\kappa_S^2$ and $\Hs\not=0$. 
		Note that the other eigenvalue in the reduced equation has negative real part when $\s=0$, and so it continues to have negative real part when $\Hs$ is small by continuity. Henceforth, we assume in addition that $\kappa^2\leq (1-\nu_0)\kappa_S^2$.

		In region 2, the unperturbed spectrum for complex Ginzburg-Landau is given by the eigenvalues of the matrix $\hat{M}(\e,\kappa,\Hs)$ defined by
		\begin{equation*}
			\hat{M}(\e,\kappa,\Hs):=\lim_{\e\rightarrow 0}\frac{1}{\e^2}M(\e,\kappa,\e\Hs)=2\a^2 \bp \Re\g & 0 \\ \Im\g & 0 \ep+\Hs^2[[\frac{1}{2}k_*^2\tl_{kk}(k_*,0)]]+\Hs i[[2i\kappa k_*\tl_{kk}(k_*,0)]].
		\end{equation*}
		These eigenvalues have uniformly negative real part for $|\kappa|\leq \kappa_S$ and $|\Hs|\sim 1$. The perturbed spectrum is therefore also negative for $\eps$ sufficiently small, as it differs by an $\cO(\e)$ term uniformly in the compact interval of $\frac{1}{C}\leq |\Hs|\leq C$.

		To complete the argument, we look at region 3. Because we've taken $|\s|\ll 1$ at the beginning of this section, it follows that $|\Hs|\gg 1$ with $|\e,\s|\ll 1$ is equivalent to $|\Hs|\gg 1$ with the higher order terms still negligible. In this regime, the eigenvalues of $\hat{m}$, where $\hat{m}$ is defined by $m=\e^2\hat{m}$, are equal to eigenvalues of $\frac{1}{2}\Hs^2[[\tl_{kk}(k_*,0)]]$ up to higher order terms, and a quick calculation shows that the eigenvalues of $[[z]]$ for $z\in\CC$ are given by $z,\bar{z}$.
	\end{proof}


	\section{Other Nonlinearities}\label{sec:other}
	In \cite{WZ},
	it was observed that any nonlinearity $\sN:H^s_{per}(\RR;\RR^n)\rightarrow L^2_{per}(\RR;\RR^n)$ satisfying the following could be used in the Lyapunov-Schmidt reduction.
	\begin{hypothesis} The nonlinear function $\sN$ satisfies:
		\begin{itemize}
			\item $\sN$ is a smooth map.
			\item $\sN$ is translation invariant in the sense that $\tau_y\sN(u)(x)=\sN(\tau_yu)(x)$ for all $x,y\in\RR$ and $u\in H^s_{per}(\RR;\RR^n)$ where $\tau_hf(x):=f(x-h)$ is a translation.
			\item $\sN(0)=D\sN(0)=0$.
		\end{itemize}
	\end{hypothesis}
	From these hypotheses, we can see that such a map $\sN$ sends the space of $\frac{2\pi}{k}$ periodic $H^s$ functions to periodic $L^2$ functions of the same period. We have the isomorphism 
	$$
	I_k:H^s_{per}([0,\frac{2\pi}{k}];\RR^n )\rightarrow H^s_{per}([0,2\pi];\RR^n )
	$$
	given by $I_ku(x):=u(kx)=u(\xi)$ and $H^s_{per}([0,X];\RR^n)$ is the space of $X$-periodic $H^s_{loc}$ functions. In a slight abuse of notation, $I_k$ will denote the same isomorphism for all $s$. Hence we can identify $\sN$ defined on $H^s_{per}$ with a map $\tilde{\sN}$ from $H^s(S^1_1;\RR^n)\times(0,\infty)$ defined by $I_k\sN(I_k^{-1}u)(x)=\tilde{\sN}(u,k)(\xi)$ whenever $u$ is $2\pi$-periodic. In another abuse of notation, we will drop the tilde on $\tilde{\sN}$.

	\begin{remarks}
			1. In the case of a local nonlinearity, e.g. the Burgers type nonlinearity 
				$$
				\cN(u)=\cQ(u,\d_x u)
				$$
				for some fixed bilinear form $\cQ:\RR^n\times\RR^n\rightarrow\RR^n$ we have that $\cN(U,k)=k\cQ(U,\d_\xi U)$, which follows from $\d_x=k\d_\xi$. In effect, all this formalism does is replace $\d_x$ by $k\d_\xi$. For a nonlocal example, fix a sufficiently rapidly decaying $\phi\in W^{1,1}(\RR)$ and a bilinear form $\cQ:\RR^n\times\RR^n\rightarrow\RR^n$, and consider the Keller-Segel \cite{BBTW} 
				type nonlinearity $\cN_\phi$ defined on $H^2_{per}$ functions by
			\begin{equation}
			\cN_\phi(u)(x):=\d_x(\cQ(u,\d_x(u*\phi)))(x)
			\end{equation}
			where $*$ denotes convolution on $\RR$. Fix a $2\pi$-periodic $U=U(\xi)$ and $k>0$, then we have that
			\begin{equation*}
			I_k(d_x\phi*(I_k^{-1}U)(x))=I_k(d_x\int_{\RR}\phi(y)U(x-y)dy)=k\d_\xi(\phi*U)(\xi)
			\end{equation*}
			for $\xi=kx$.
			
			2. Morally, one wants to think of the map $H^s_{per}([0,2\pi];\RR^n)\times(0,\infty)\rightarrow H^s_{per}(\RR;\RR^n)$ given by $(u,k)\rightarrow I_ku$ as a homeomorphism with ``inverse'' $u\rightarrow (\overline{u},k)$ where $\frac{2\pi}{k}$ is the minimal period of $u$ and $\overline{u}=I_ku$. However, while $(u,k)\rightarrow I_ku$ is a continuous surjection, it dramatically fails to be injective. Equally troubling is that the proposed inverse map is only defined for nonconstant functions and fails to be continuous.
	\end{remarks}

	From the Schwartz kernel theorem \cite{H},
	we see that $D_u\sN(\overline{u},k)v$ admits the representation
	\begin{equation}\label{eq:SKT}
		D_u\sN(\overline{u},k)v(\xi)=\sum_{\eta\in\ZZ}\cK(\xi,k\eta;\overline{u},k)\hat{v}(\eta)e^{i\eta\xi}.
	\end{equation}
	We can compute $\cK(\xi,\eta;u,k)$ in terms of $D_U\sN(u,k)$ using the identity
	\begin{equation}\label{eq:SKTKeyId}
		\cK(\xi,k\eta;\overline{u},k)=e^{-i\eta\xi}D_u\sN(\overline{u},k)e^{i\xi\eta}.
	\end{equation}
	In particular, our assumption that $\sN$ be smooth with respect to $k$ implies that $\cK$ depends
	smoothly on $k$.

	A primary use of the Schwartz kernel representation is to compute $D\sN(u,k)e^{i\s\xi}v(\xi)$ in terms of $\cK(\xi,k\eta;u,k)$, as
	\begin{equation}
		D_u\sN(\overline{u},k)e^{i\s\xi}v(\xi)=\sum_{\eta\in\ZZ}\cK(\xi,k\eta;\overline{u},k)\hat{v}(\eta-\s)e^{i\eta\xi}=\sum_{\tilde{\eta}\in\ZZ}e^{i\s\xi}\cK(\eta,k(\tilde{\eta}+\s);\overline{u},k)\hat{v}(\tilde{\eta})e^{i\tilde{\eta}\xi}.
	\end{equation}
	Before we show the details of the derivation of the reduced equation in the following examples, we compute the constant $\g$ in the general case.

	\begin{lemma}\label{lem:GeneralGamma}
		The constant $\g$ is given by
		\ba\label{eq:generalgamma}
		\g&=\ell\big[\sQ(0,1)(-\frac{1}{2}S(0,0)^{-1}\Re\sQ(1,-1)(r,\bar{r}),r)
		+\sQ(2,-1)(-\frac{1}{4}S(2k_*,0)^{-1}\sQ(1,1)(r,r),\bar{r})\\
		&\quad +\frac{1}{16}\sC(1,1,-1)(r,r,\bar{r}) \big],
		\ea
		where, informally, we've identified $\sQ(nk_*,mk_*)$ with $\sQ(n,m)$ and similarly for $\sC$.
	\end{lemma}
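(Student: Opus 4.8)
The plan is to reproduce, essentially line for line, the derivation of the Landau constant carried out in Section~\ref{sec:coper} for the pointwise nonlinearity $\cN(u)(x)=N(u(x))$, replacing the constant bilinear and trilinear forms $\cQ$ and $\cC$ throughout by the frequency-labelled forms $\sQ(n,m)$ and $\sC(n,m,p)$ read off from the Schwartz-kernel representation \eqref{eq:SKT}--\eqref{eq:SKTKeyId}. Since $\sN$ is translation invariant, $D^2\sN(0)$ and $D^3\sN(0)$ send a product of pure Fourier modes $e^{in\xi}w_1,\dots$ to the single mode $e^{i(n+m+\cdots)\xi}$; writing the resulting coefficient vector as $\sQ(n,m)(w_1,w_2)$, respectively $\sC(n,m,p)(w_1,w_2,w_3)$, one records from the symmetry of $D^2\sN(0)$, $D^3\sN(0)$ and the reality of $\sN$ the identities $\sQ(n,m)(w_1,w_2)=\sQ(m,n)(w_2,w_1)$ and $\overline{\sQ(n,m)(w_1,w_2)}=\sQ(-n,-m)(\bar w_1,\bar w_2)$, together with their $\sC$-analogues; in particular $\sQ(1,-1)(r,\bar r)$ is real, which is why $\Re$ may be written in \eqref{eq:generalgamma}. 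In the pointwise case these forms are label-independent and collapse to $\cQ$ and $\cC$, so the specialization of \eqref{eq:generalgamma} is required to reproduce \eqref{eq:gammadef}; that is the consistency check to keep in mind while pinning down numerical coefficients.

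First I would recompute the two relevant Fourier modes of $\d_\e^2\tilde u_{0,\kappa}$. Inserting the expansion $\tilde u_{\e,\kappa}=\e\Upsilon_\a+\e V$ of \eqref{reform} (with $PV=0$ and $\a$ real, from Proposition~\ref{mainLS}) into the profile equation \eqref{eq:tildeueqn} with $\sN$ in place of $\cN$, the quadratic part of the nonlinearity is $\e^2\sQ(\Upsilon_\a,\Upsilon_\a)+\cO(\e^3)$, whose Fourier support lies in $\{0,\pm2\}$; matching the mode-$0$ and mode-$2$ components exactly as in \eqref{eq:mode0}--\eqref{eq:mode2} gives $\widehat{\d_\e^2\tilde u_{0,\kappa}}(0)$ proportional to $S_0\,\Re\sQ(1,-1)(r,\bar r)$ --- the $\Re$ arising from the symmetrized quadratic source $\sQ(1,-1)(r,\bar r)+\sQ(-1,1)(\bar r,r)$ --- and $\widehat{\d_\e^2\tilde u_{0,\kappa}}(2)$ proportional to $S_2\,\sQ(1,1)(r,r)$, where $S_\eta=[S(\eta k_*,0)+i\eta d_* k_*]^{-1}$ is the mode-$\eta$ resolvent of Section~\ref{sec:coper} (so $S_0=S(0,0)^{-1}$, and $S_2$ is written $S(2k_*,0)^{-1}$ in the statement).

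Next I would reassemble the $\cO(\e^2)$ coefficient of the reduced equation $PB(\e,\kappa,\l)(\Upsilon_\b+\cV)=0$, i.e.\ the exact analogue of \eqref{eq:ewww}, by the same Fourier-support bookkeeping as in Section~\ref{sec:coper}. The linear terms ($\tl_{kk}$, $\mu_{\e\e}$, $d$) and the ``linear--nonlinear'' and ``nonlinear--linear'' terms behave exactly as before: the latter two vanish because their Fourier support lies in $\{0,\pm2\}\subset\ker P$ (as in \eqref{eq:nonlinlin}, \eqref{eq:linnonlin}), and the ``linear--linear'' term contributes only to the linear part (as in \eqref{eq:linlin}); none of these affect $\g$. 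Only three nonlinear contributions survive in the $e^{i\xi}$ coefficient, and each is obtained from its pointwise counterpart by attaching the correct frequency labels: from $PD^2\sN(0)(\d_\e^2\tilde u_{0,\kappa},\Upsilon_\b)$, the mode-$0\times$mode-$(+1)$ piece carries $\sQ(0,1)$ and the mode-$2\times$mode-$(-1)$ piece carries $\sQ(2,-1)$; from $PD^3\sN(0)(\Upsilon_\a,\Upsilon_\a,\Upsilon_\b)$, the labels must sum to $+1$, producing $\sC(1,1,-1)$ together with its relabellings $\sC(1,-1,1)$ and $\sC(-1,1,1)$, all equal by symmetry, of which only the $(+1,+1,-1)$ ordering feeds the $\bar\b$-coefficient; and from the nonlinear--nonlinear term $-PD^2\sN(0)\big(\Upsilon_\a,\,T_\l(I-P)D^2\sN(0)(\Upsilon_\a,\Upsilon_\b)\big)+\cO(|\l|)$ (as in \eqref{eq:nonlinnonlin1}--\eqref{eq:nonlinnonlin3}), in which $T_\l$ restricted to the modes $0,\pm2$ is just $S_\eta+\cO(|\l|)$, one picks up $\sQ(0,1)\big(S_0\,\sQ(1,-1)(r,\bar r),r\big)$ from the mode-$0$ cascade and $\sQ(2,-1)\big(S_2\,\sQ(1,1)(r,r),\bar r\big)$ from the mode-$2$ cascade.

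Finally, $\g$ is read off exactly as in \eqref{eq:ewwwbar}: it is the coefficient of $\a^2\bar\b$ in the $e^{i\xi}r$ component of the $\cO(\e^2)$ term, a coefficient to which the linear terms contribute nothing. Applying $\ell$ (normalized by $\ell r=1$), collecting the three $\bar\b$-contributions identified above, and using the symmetry of $\sQ$ and $\sC$ to put the arguments in the displayed order, one obtains \eqref{eq:generalgamma}; no appeal to the spectral identity \eqref{eq:spectralid} is needed here, since the nonlinear part of the reduced equation is already in closed form, and no analysis beyond that of Section~\ref{sec:coper} enters. The one step that genuinely demands care --- and the place I expect the bookkeeping to be most delicate --- is keeping the frequency labels attached to the \emph{factors} rather than to the product, which is precisely the content of the identity \eqref{eq:SKTKeyId}, and correctly pairing each quadratic cascade with the resolvent $S_0$ or $S_2$ (including, for full precision, the $2id_*k_*$ shift hidden inside $S_2$). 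Everything else is a faithful transcription of the pointwise computation.
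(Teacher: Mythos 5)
Your computation is sound, but you have taken a genuinely different route from the paper, and the difference matters for the logic of the section. The paper proves this lemma on the \emph{existence/amplitude-equation side}: it substitutes the formal Ginzburg--Landau ansatz $U^\e$ of \eqref{form} directly into \eqref{std}, first establishing the observation $\d_k^j\sN(0,k)=\d_k^j\sN_u(0,k)=0$ so that the Taylor expansion of $\sN(U^\e,k)$ in $(u,k)$ reduces to $\tfrac12 D_u^2\sN(0,k_*)(U^\e,U^\e)+\tfrac16 D_u^3\sN(0,k_*)(U^\e,U^\e,U^\e)+\tfrac12\kappa\d_kD_u^2\sN(0,k_*)(U^\e,U^\e)+\cO(\e^4)$ (the last term contributing only through $\d_{\Hx}A$ and hence not to $\gamma$), then computes the second-order correctors $\Psi_0,\Psi_2$ and reads $\gamma$ off as the coefficient of $|A|^2A$ in the $e^{i\xi}$ mode at $\cO(\e^3)$. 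You instead rerun the \emph{stability-side} Lyapunov--Schmidt reduction of Section~\ref{sec:coper} with frequency-labelled forms and read $\gamma$ off as the $\a^2\bar\b$ coefficient, as in \eqref{eq:ewwwbar}. Both routes perform the same Fourier-mode bookkeeping (the same cascades through $S_0$ and $S_2$, the same $(+1,+1,-1)$ cubic labelling), and your observations about the reality of $\sQ(1,-1)(r,\bar r)$ and about the $2id_*k_*$ shift suppressed in the statement's $S(2k_*,0)^{-1}$ are correct and worth recording; your route also sidesteps the $\d_k^j\sN(0,k)=0$ observation entirely, since you work at fixed $k$ in the exact profile equation.

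The one point you must address is a circularity: $\gamma$ is \emph{defined} in \eqref{eq:cGL} as the Landau coefficient of the amplitude equation, and the assertion that it coincides with the $\a^2\bar\b$ coefficient of the reduced spectral equation is, in the pointwise case, a \emph{verified identity} (\eqref{eq:gammadef} is recalled from the cGL derivation of \cite{WZ} and then matched against \eqref{eq:ewwwbar}), not a definition. In the general nonlocal case that identification is precisely part of what Section~\ref{sec:other} goes on to establish ("the reduced equation agrees with the prediction of complex Ginzburg-Landau"), so taking it as the starting point of the proof of this lemma inverts the logical order. To close the gap you should either carry out the $\cO(\e^3)$ amplitude-equation computation as the paper does, or explicitly argue that the two coefficients agree for general translation-invariant $\sN$ (e.g.\ by noting that the existence reduced equation's cubic coefficient and the cGL compatibility condition arise from identical projections and mode inversions). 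Without one of these, your argument proves a formula for a related coefficient rather than for the constant named in the lemma.
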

	\begin{proof}
		We begin with the following observation.
		\begin{obs}
			$\d_k^j\sN(0,k)=\d_k^jD_u\sN(0,k)=0$ for all $k>0$ and all $j\in\NN$.
		\end{obs}
		This follows from $\sN(0,k)=I_k\sN(I_k^{-1}0)\equiv0$ and $D_u\sN(\overline{u},k)=I_kD_u\sN(I_k^{-1}\overline{u})I_k^{-1}$ which is also identically zero when $\overline{u}=0$.

		We Taylor expand the nonlinearity, and upon applying the above observation, discover that
		\begin{equation}
			\sN(U^\e,k)=\frac{1}{2}D^2_u\sN(0,k_*)(U^\e,U^\e)+\frac{1}{6}D^3_u\sN(0,k_*)(U^\e,U^\e,U^\e)+\frac{1}{2}\kappa\d_k D_u^2\sN(0,k_*)(U^\e,U^\e)+\cO(\e^4).
		\end{equation}
		Since each form in the above is translation invariant, it follows that each is a multilinear Fourier multiplier operator, which we will denote by
		\begin{equation}
				D_u^2\sN(0,k)(U,V)=\sum_{\eta_1,\eta_2\in\ZZ}\sQ(k\eta_1,k\eta_2)(\hat{U}(\eta_1),\hat{V}(\eta_2))e^{i\xi(\eta_1+\eta_2)}
		\ee
				and
		\be
				D_u^3\sN(0,k)(U,V,W)=\sum_{\eta_1,\eta_2,\eta_3\in\ZZ}\sC(k\eta_1,k\eta_2,k\eta_3)(\hat{U}(\eta_1),\hat{V}(\eta_2),\hat{W}(\eta_3))e^{i\xi(\eta_1+\eta_2+\eta_3)}.
		\end{equation}

		Writing $\kappa=\e\kappa$, we find that 
		\begin{equation}
			\kappa\d_k D_u^2\sN(0,k_*)(U^\e,U^\e)=\e \tilde{\sQ}(\d_{\Hx} U^\e, U^\e)
		\end{equation} 
		for some known bilinear form $\tilde{\sQ}$. At $\cO(\e^2)$, the relevant terms of which are given by
		\begin{equation}\label{eq:GenPsi}
			\Psi_2(\Hx,\Ht)=-\frac{1}{4}A(\Hx,\Ht)^2S(2k_*,0)^{-1}\sQ(k_*,k_*)(r,r)
			\ee
			and
			\be
			\Psi_0(\Hx,\Ht)=-\frac{1}{4}|A(\Hx,\Ht)|^2S(0,0)^{-1}\left[\sQ(k_*,-k_*)(r,\bar{r})+\sQ(-k_*,k_*)(\bar{r},r) \right].
		\end{equation}
		From this observation, we may conclude that the nonlinearity contributes at $\cO(\e^3)$ 
		and Fourier mode $e^{i\xi}$ the term
		\begin{equation}
				D^2_u\sN(0,k_*)(\Psi_0,Ar)+D^2_u\sN(0,k_*)(\Psi_2,\bar{Ar})+\frac{1}{16}D^3_u\sN(0,k_*)(Ar,Ar,\bar{Ar}).
		\end{equation}
		Plugging in \eqref{eq:GenPsi} and applying $\ell$ gives the desired formula.
	\end{proof}

	\begin{remark}
		An alternative argument that $\d_k^j\sN(0,k)\equiv 0$ is the observation that $\sN(\overline{u},k)$ is independent of $k$ for all constant functions $\overline{u}$.
	\end{remark}

	\subsection{Two local examples}
	In this section, we will focus on the following model reaction-diffusion systems for $\RR^n$ valued $u$:
	\be\label{eq:ModelLocalSystems} 
		u_t=D(\mu)\d_x^2u+A(\mu)u+\frac{1}{2}\d_x^2(u\circ u)=L(\mu)u+u\circ u_{xx}+u_x\circ u_x 
		\ee
		and
		\be\label{eq:model2}
		u_t=D(\mu)\d_x^2u+A(\mu)u+\frac{1}{2}\d_x(u\circ u)=L(\mu)u+u\circ u_x,
		\ee
	where $u\circ u$ denotes the Hadamard product on $\RR^n$. For simplicity, we take $\mu=\e^2$.

	\begin{remark}
		For the purposes of these examples, the linear part being reaction-diffusion will suffice because we are primarily interested in the effects of derivatives on the nonlinear term in the stability calculation. 
		Assuming a reaction-diffusion equation forces $d(\e,\kappa)\equiv 0$ by $O(2)$ invariance and in addition forces $\ell$ and $r$ to be real vectors. The purpose of having two nonlinearities is that the vector form of Burgers equation turns out to be linear in $\s$, hence automatically drops out of $B_{\s\s}(\e,\kappa,\l,\s)$.
	\end{remark}

	It will suffice to show that the reduced equation agrees with the prediction of complex Ginzburg-Landau, because the spectral continuity argument primarily only required 
	$$
	||D_u\cN(\tilde{u}_{\e,\kappa},k)||_{H^s\rightarrow L^2}=\cO(\e). 
	$$
	Moreover, the refined error estimate for coperiodic stability carries through with minimal modifications because essentially only $u$-derivatives are taken.

	\begin{theorem}
		The reduced equation for the first system in \eqref{eq:ModelLocalSystems} agrees with the prediction of complex Ginzburg-Landau.
	\end{theorem}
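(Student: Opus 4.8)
The plan is to rerun the Lyapunov-Schmidt reduction of Sections~\ref{sec:coper}--\ref{sec:gencase}, now with the second-order nonlinearity $\cN(u)=\tfrac12\d_x^2(u\circ u)$, and to check that the reduced equation produced is exactly \eqref{eq:GStableRedEqn}, with Landau constant $\g$ the value furnished by Lemma~\ref{lem:GeneralGamma}; Theorems~\ref{thm:specagree} and~\ref{thm:genstab} then apply verbatim and give the asserted agreement with complex Ginzburg-Landau. First I would pass to the variable $\xi=kx$, in which $\cN(U,k)=\tfrac12 k^2\d_\xi^2(U\circ U)$, so that the associated Taylor forms are the Fourier multipliers with symbols $\sQ(k\eta_1,k\eta_2)(a,b)=-(k\eta_1+k\eta_2)^2(a\circ b)$ and $\sC\equiv 0$. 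Since we are in the $O(2)$-symmetric reaction-diffusion setting, $d(\eps,\kappa)\equiv d_*\equiv d_\eps\equiv 0$ and $\ell,r$ are real, so all $d$-dependent terms drop from the general reduced equation. Finally, because $\tilde u_{\eps,\kappa}=\cO(\eps)$ and $\cN$ is quadratic of order $2$, one has $\|D_u\cN(\tilde u_{\eps,\kappa},k)\|_{H^s\to L^2}=\cO(\eps)$ for $s$ sufficiently large, which is all that is needed to rerun the spectral-continuity reduction of Proposition~\ref{prop:smallsigma}, the construction of $\cV$ in Proposition~\ref{prop:CVExist}, and the refined error estimate of Lemma~\ref{lem:corefined}, whose proof manipulates only $\a$-, $\b$- and $\eps$-derivatives of the reduced equations and never $x$-derivatives.

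Next I would observe that the \emph{linear} part of the reduced equation is untouched by the change of nonlinearity. In Lemmas~\ref{lem:GTaylorOE}, \ref{lem:GTaylorOES} and~\ref{lem:GTaylorOSS} every nonlinear contribution entered either through $D^2\cN(0)(\Upsilon_\a,\cdot)$ applied to a mode-$\pm1$ function or through $T_\l(I-P)D^2\cN(0)(\Upsilon_\a,\Upsilon_\b)$; replacing $\cN$ by a derivative nonlinearity merely inserts polynomial frequency weights into these forms, so their Fourier support still lies in $\{0,\pm2\}$ and is annihilated by $P$. The same remark disposes of the $\s$-dependence that $D\cN(\tilde u_{\eps,\kappa})$ now carries through the Bloch substitution $\d_\xi\mapsto\d_\xi+i\s$: all mixed $\eps$--$\s$ derivatives of the nonlinear part remain Fourier-supported in $\{0,\pm2\}$. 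Consequently the coefficients of $\s^2$ and $\eps\s$ in \eqref{eq:GStableRedEqn}, namely $[[\tfrac12 k_*^2\tl_{kk}(k_*,0)]]$ and $-\eps\s\,i[[i\kappa k_*\tl_{kk}(k_*,0)]]$, together with the $\cO(|\l|)$ error structure, are exactly those already computed, being functions of the symbol $S$ alone.

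The only genuinely new computation is the $\cO(\eps^2)$ block $\tfrac12 PB_{\eps\eps}(0,\kappa,\l,0)\Upsilon_\b+PB_\eps(0,\kappa,\l,0)\cV_\eps(0,\kappa,\l,0,\b)$, which must come out equal to $2\eps^2\a^2\bp\Re\g&0\\\Im\g&0\ep$ acting on $\bp\b_1\\\b_2\ep$. For this I would recompute, following \eqref{eq:tildeueqn}--\eqref{eq:mode2}, the relevant Fourier modes of $\d_\eps^2\tilde u_{0,\kappa}$: the mode-$0$ forcing $\sQ(k_*,-k_*)(r,\bar r)+\sQ(-k_*,k_*)(\bar r,r)$ now vanishes identically, carrying the factor $(k_*-k_*)^2=0$, so $\widehat{\d_\eps^2\tilde u_{0,\kappa}}(0)=0$, whereas $\widehat{\d_\eps^2\tilde u_{0,\kappa}}(2)=-\tfrac14\a^2 S(2k_*,0)^{-1}\sQ(k_*,k_*)(r,r)$ as before (using $d_*=0$). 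Assembling the two surviving quadratic contributions to the reduced equation --- $PD^2\cN(0)(\d_\eps^2\tilde u_{0,\kappa},\Upsilon_\b)$, of which only the mode-$2$ part survives, entering through $\sQ(2k_*,-k_*)$, and the nonlinear--nonlinear term $PD^2\cN(0)(\Upsilon_\a,T_\l(I-P)D^2\cN(0)(\Upsilon_\a,\Upsilon_\b))$, whose mode-$0$ channel again drops because $\sQ(k_*,-k_*)=0$ --- and noting that the cubic term vanishes since $\sC\equiv 0$, the $\b$- and $\bar\b$-coefficients collapse to $2\g\a^2\b+\g\a^2\bar\b$ with $\g=\ell\bigl[\sQ(2k_*,-k_*)\bigl(-\tfrac14 S(2k_*,0)^{-1}\sQ(k_*,k_*)(r,r),\bar r\bigr)\bigr]$, precisely the value of Lemma~\ref{lem:GeneralGamma} with its two other summands vanishing here. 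The reduced equation is then \eqref{eq:GStableRedEqn} verbatim, and Theorems~\ref{thm:specagree} and~\ref{thm:genstab} finish the argument.

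The main obstacle I anticipate is the bookkeeping of the frequency-dependent forms: one must keep careful track of $\sQ$ evaluated at the several distinct frequency pairs $(0,k_*)$, $(k_*,k_*)$, $(2k_*,-k_*)$ and $(k_*,-k_*)$ that occur, check that inserting the $x$-derivatives rescales each only by a polynomial frequency weight --- in particular that the Fourier-support cancellations responsible for the $\cN$-independence of the linear part survive --- and propagate the vanishing $\sQ(k_*,-k_*)=0$ consistently through every place the mode-$0$ channel would otherwise contribute, both in $\d_\eps^2\tilde u_{0,\kappa}$ and in $D^2\cN(0)(\Upsilon_\a,\Upsilon_\b)$. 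A secondary technical point is to fix $s$ large once and for all so that $D\cN(\tilde u_{\eps,\kappa})\colon H^s\to L^2$ is an $\cO(\eps)$ perturbation and $T_\l\colon(I-P)L^2\to(I-P)H^m$ remains bounded, so that the operator-theoretic steps of Sections~\ref{sec:coper}--\ref{sec:gencase} go through unchanged.
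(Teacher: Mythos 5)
Your proposal is correct and follows essentially the same route as the paper: a term-by-term verification that the Taylor coefficients of the reduced equation at orders $\e$, $\s$, $\e\s$, $\s^2$ are unchanged (all new nonlinear contributions, including those induced by the Bloch substitution $\d_\xi\mapsto\d_\xi+i\s$, being Fourier-supported in $\{0,\pm2\}$ and killed by $P$), with the only genuinely new work at order $\e^2$, where the mode-$0$ channel drops and only the $\sQ(2k_*,-k_*)$ channel contributes to $\g$. The sole difference is stylistic — you specialize the general multilinear-symbol formalism of Lemma \ref{lem:GeneralGamma} with $\sQ(k\eta_1,k\eta_2)(a,b)=-(k\eta_1+k\eta_2)^2(a\circ b)$ and $\sC\equiv0$, while the paper redoes the computation by hand with the explicit operators $D(0)$, $A(0)$ and Hadamard products in Lemmas \ref{lem:Example1DBloch}--\ref{lem:Example1OSS} — and your bookkeeping of the mode-$2$ coefficient is in fact the consistent specialization of \eqref{eq:mode2}.
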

	The proof of this theorem will take place over the space of several lemmas, 
	the first of which computes the relevant derivatives of the Bloch operator for this system.
	\begin{lemma}\label{lem:Example1DBloch}
		The Bloch operator for this system: 
		\begin{equation}\label{eq:BlochLoc1}
		\begin{split}
		B(\e,\kappa,\l,\s)v=&D(\e^2)k^2(\d_\xi+i\s)^2v+A(\e^2)v+\\
		&+k^2 \tilde{u}_{\e,\kappa}\circ (\d_\xi+i\s)^2v+(\d_x^2\tilde{u}_{\e,\kappa})\circ v+2k(\d_x\tilde{u}_{\e,\kappa})\circ (\d_\xi+i\s)v-\l v,
		\end{split}
		\end{equation}
		has the derivatives:
		\begin{align}\label{eq:DBlochLoc1}
			B(0,\kappa,\l,0)v&=k_*^2D(0)\d_\xi^2v+A(0)v-\l v\\
			B_\e(0,\kappa,\l,0)v&=2\kappa k_* D(0)\d_\xi^2v+k_*^2\Upsilon_\a\circ \d_\xi^2v+\d_x^2\Upsilon_\a\circ v+2k_* \d_x\Upsilon_\a\circ \d_\xi v \\
			B_\s(0,\kappa,\l,0)v&=2ik_*^2D(0)\d_\xi v\\
			\begin{split}
			B_{\e\e}(0,\kappa,\l,0)v&=2\kappa^2 D(0)\d_\xi^2v+2k_*^2D_\mu(0)\d_\xi^2v+2A_\mu(0)v+2\kappa k_*\Upsilon_\a\circ \d_\xi^2v+\\
				&+k_*^2(\d_\e^2\tilde{u}_{0,\kappa})\circ\d_\xi^2v+(\d_\e^2\d_x^2\tilde{u}_{0,\kappa})\circ v+2\kappa (\d_x \Upsilon_\a)\circ v+2k_*(\d_\e^2\d_x \tilde{u}_{0,\kappa})\circ \d_\xi v
			\end{split}\\
			B_{\e\s}(0,\kappa,\l,0)v&=4i\kappa k_*D(0)\d_\xi v+2ik_*^2\Upsilon_\a\circ \d_\xi v+2ik_*\d_x\Upsilon_\a\circ v\\
			B_{\s\s}(0,\kappa,\l,0)v&=-2k_*^2D(0)v.
		\end{align}
	\end{lemma}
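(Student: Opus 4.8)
The plan is to obtain \eqref{eq:BlochLoc1} by linearization followed by the Bloch substitution, and then to read off \eqref{eq:DBlochLoc1} by differentiating term by term. First I would linearize the first system in \eqref{eq:ModelLocalSystems}, written as $u_t = L(\mu)u + u\circ u_{xx} + u_x\circ u_x$, about the stationary solution $\tilde u_{\e,\kappa}$: the linearization of the nonlinearity in the direction $v$ is $\tilde u_{\e,\kappa}\circ v_{xx} + v\circ\d_x^2\tilde u_{\e,\kappa} + 2\d_x\tilde u_{\e,\kappa}\circ v_x$, equivalently $\d_x^2(\tilde u_{\e,\kappa}\circ v)$. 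Passing to the coordinate $\xi = kx$, so that $\d_x = k\d_\xi$, and substituting the Bloch ansatz $v = e^{i\s\xi_g + \l t}W(\xi)$ --- which, since the $O(2)$-invariance of the reaction--diffusion form forces $d(\e,\kappa)\equiv 0$ and hence collapses \eqref{eq:BlochDef} to the replacement $\d_\xi \mapsto \d_\xi + i\s$ together with $\d_t \mapsto \l$ --- produces precisely \eqref{eq:BlochLoc1}.

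For the derivatives, the only inputs needed are the relations $k = k_* + \e\kappa$ and $\mu = \e^2$ (so that $\mu$ and $\d_\e\mu$ vanish at $\e=0$ while $\mu_{\e\e} = 2$), the expansion $\tilde u_{\e,\kappa} = \e\Upsilon_\a + \e V(\e,\kappa,\a)$ with $PV = 0$ from \eqref{reform} (so $\tilde u_{0,\kappa} = 0$ and, as recorded in the coperiodic analysis, $\d_\e\tilde u_{0,\kappa} = \Upsilon_\a$), and the Leibniz rule. The $\s$-dependence of \eqref{eq:BlochLoc1} enters only through powers of $\d_\xi + i\s$, so the identities $\d_\s(\d_\xi+i\s) = i$, $\d_\s(\d_\xi+i\s)^2 = 2i\d_\xi$ at $\s=0$, and $\d_\s^2(\d_\xi+i\s)^2 = -2$ give the $\s$, $\e\s$, and $\s^2$ pieces at once; for the $\e$ and $\e^2$ pieces one differentiates the coefficients $D(\e^2)k^2$, $A(\e^2)$, $k^2\tilde u_{\e,\kappa}$, $\d_x^2\tilde u_{\e,\kappa}$, and $2k\d_x\tilde u_{\e,\kappa}$ in $\e$ and evaluates at $\e=0$. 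Because $\tilde u_{0,\kappa} = 0$, every nonlinear term carries a factor of $\tilde u_{\e,\kappa}$ that must be hit by at least one $\e$-derivative before it survives at $\e=0$: in $B_\e$ exactly one $\e$-derivative lands on it (giving terms proportional to $\Upsilon_\a$), while in $B_{\e\e}$ and $B_{\e\s}$ one also records the cross-term where it is hit twice, producing the unsimplified $\d_\e^2\tilde u_{0,\kappa}$ contributions. In $B(0,\kappa,\l,0)$, $B_\s(0,\kappa,\l,0)$, and $B_{\s\s}(0,\kappa,\l,0)$ the nonlinearity drops out entirely, so $B_{\s\s}(0,\kappa,\l,0)v = -2k_*^2 D(0)v$ comes from $\d_\s^2$ of the linear diffusion term alone.

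I expect no real obstacle here: this is a bookkeeping lemma. The points requiring care are keeping the distinction between $\d_x = k\d_\xi$ and $\d_\xi$ straight so that the powers of $k$ come out right, and invoking \eqref{reform} rather than re-deriving it for the values $\tilde u_{0,\kappa} = 0$ and $\d_\e\tilde u_{0,\kappa} = \Upsilon_\a$; the remaining second derivative $\d_\e^2\tilde u_{0,\kappa}$ is deliberately left in place here, to be computed from \eqref{eq:tildeueqn} in the subsequent lemmas exactly as in the coperiodic case. The parallel computation for the second system \eqref{eq:model2} is shorter still, since there the nonlinearity $u\circ u_x$ linearizes to $\tilde u_{\e,\kappa}\circ v_x + v\circ\d_x\tilde u_{\e,\kappa}$, which is first order in $\d_\xi$ and hence in $\s$, so that it contributes nothing to $B_{\s\s}$.
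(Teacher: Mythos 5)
Your proposal is correct and follows essentially the same route as the paper: the authors likewise split $B$ into the Fourier-multiplier part $L(k,\mu;\s)=k^2D(\mu)(\d_\xi+i\s)^2+A(\mu)$ and the linearized nonlinearity $k^2u\circ(\d_\xi+i\s)^2v+(\d_x^2u)\circ v+2k(\d_x u)\circ(\d_\xi+i\s)v$, differentiate each in $\e$ and $\s$ using $k=k_*+\e\kappa$, $\mu=\e^2$, and $\tilde u_{\e,\kappa}=\e\Upsilon_\a+\e V$ with $V=\cO(\e)$, and evaluate at $\e=\s=0$. Your observations that the $\s$-dependence enters only through powers of $\d_\xi+i\s$ and that $\tilde u_{0,\kappa}=0$ kills any nonlinear term not hit by an $\e$-derivative are exactly the bookkeeping the paper uses.
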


	\begin{proof}
		We first compute the terms coming from $L(k,\mu;\s)=k^2D(\mu)(\d_\xi+i\s)^2+A(\mu)$:
		\begin{align*}
			\d_\e L(k,\mu;\s)&=[2\kappa kD(\mu)+\mu_\e(\e)k^2D_\mu(\mu) ](\d_\xi+i\s)^2+A_\mu(\mu)\mu_\e(\e)\\
			\d_\s L(k,\mu;\s)&=2ik^2D(\mu)(\d_\xi+i\s)\\
			\begin{split}
			\d_\e^2 L(k,\mu;\s)&=[2\kappa^2D(\mu)+4\kappa\mu_\e(\e)D_\mu(\mu)+\mu_{\e\e}(\e)k^2D_\mu(\mu)+\mu_\e(\e)^2D_{\mu\mu}(\mu) ](\d_\xi+i\s)^2+\\ &+A_\mu(\mu)\mu_{\e\e}(\e)+A_{\mu\mu}(\mu)\mu_\e(\e)^2
			\end{split} \\
			\d_\e\d_\s L(k,\mu;\s)&=2i[2\kappa kD(\mu)+\mu_\e(\e)k^2D_\mu(\mu) ](\d_\xi+i\s)\\
			\d_\s^2 L(k,\mu;\s)&=-2k^2D(\mu).
		\end{align*}
		Taking $\mu(\e)=\e^2$ and then evaluating at $\e=\s=0$ gives the contribution from $L(k,\mu;\s)$ in \eqref{eq:DBlochLoc1}. 
		
		Now we consider the contribution of the linearization of the nonlinearity. 
		    We define 
		\begin{equation}
			D_u\cN(u;k,\s)v:=k^2 u\circ (\d_\xi+i\s)^2v+(\d_x^2u)\circ v+2k(\d_xu)\circ (\d_\xi+i\s)v.
		\end{equation}
		For the first derivatives with respect to $\e,\s$ we have
		\begin{align*}
		\begin{split}
			\d_\e D_u\cN(\tilde{u}_{\e,\kappa};k,\s)v&=2\kappa k \tilde{u}_{\e,\kappa}\circ (\d_\xi+i\s)^2v+2\kappa(\d_x\tilde{u}_{\e,\kappa})\circ (\d_\xi+i\s)v+\\&
			+k^2 (\Upsilon_\a+\cO(\e))\circ (\d_\xi+i\s)^2v+\d_x^2(\Upsilon_\a+\cO(\e))\circ v+2k\d_x(\Upsilon_\a+\cO(\e))\circ (\d_\xi+i\s)v
		\end{split}\\
			\d_\s D_u\cN(\tilde{u}_{\e,\kappa};k,\s)v&=2i k^2 \tilde{u}_{\e,\kappa}\circ (\d_\xi+i\s)v+2ik(\d_x\tilde{u}_{\e,\kappa})\circ v,
		\end{align*}
		where we've used $\tilde{u}_{\e,\kappa}=\e\Upsilon_\a+\e V(\e,\kappa,\a)$ with $V(\e,\kappa,\a)=\cO(\e)$. Taking $\e=\s=0$ as before gives the desired identities. The second derivatives follow from a similar, though lengthier, computation.
	\end{proof}

	In this example, Proposition \eqref{prop:CVExist} continues to hold because the proof relied entirely on the structure of $L(k,\mu)$ and the smoothness of $\cN(u)$. For convenience, we recall the necessary identities from proposition \eqref{prop:CVExist}:
	\begin{enumerate}
		\item $\cV(0,\kappa,\l,0,\b)\equiv0$
		\item $\cV_\e(0,\kappa,\l,0,\b)=-T_\l (I-P)B_\e(0,\kappa,\l,0)\Upsilon_\b$
		\item $\cV_\s(0,\kappa,\l,0,\b)=-T_\l (I-P)B_\s(0,\kappa,\l,0)\Upsilon_\b$,
	\end{enumerate}
	where $T_\l=\left[(I-P)B(0,\kappa,\l)(I-P)\right]^{-1}$. 
	As before we have the key commutation relationship $PB(0,\kappa,\l,0)=-\l P$. Following the procedure of section \eqref{sec:gencase}, we simplify each term of the Taylor expansion of $PB(\e,\kappa,\l,\s)(\Upsilon_\b+\cV(\e,\kappa,\l,\s,\b))=0$ in its own lemma. Similarly as in Section  \eqref{sec:gencase},
the key commutation relationship implies that for each term, 
the corresponding derivative of $\cV$ does not contribute; e.g. in the $\cO(\s)$ term $\cV_\s$ is annihilated by $P$.

	\begin{lemma}\label{lem:Example1O1}
		$PB(0,\kappa,\l,0)\Upsilon_\b$ in reduced form is given by
		$
			-\l \Upsilon_\b.
		$
	\end{lemma}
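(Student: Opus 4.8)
The plan is to argue exactly as in Lemma~\ref{lem:GTaylorO1}, reducing the statement to the commutator identity of Observation~\ref{obs:keycommutator}. First I would read off from Lemma~\ref{lem:Example1DBloch} that
\[
B(0,\kappa,\l,0)v = k_*^2 D(0)\d_\xi^2 v + A(0)v - \l v = \big(L(k_*,0;0) - \l\big)v,
\]
and observe that, since the model systems \eqref{eq:ModelLocalSystems}--\eqref{eq:model2} are $O(2)$-symmetric reaction-diffusion equations, one has $d(\e,\kappa)\equiv 0$ (hence $d_* = 0$ and $d_\e \equiv 0$); consequently the $\s = 0$ specialization of the Bloch operator \eqref{eq:BlochLoc1} coincides with the coperiodic operator $B(0,\kappa,\l)$ of Section~\ref{sec:coper}, and $P$ is still the spectral projection onto $\ker\big(L(k_*,0) + k_* d_* \d_\xi\big) = \ker L(k_*,0;0)$.

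Next I would use that $\Upsilon_\b = \Re(\b e^{i\xi} r)$ lies in $\ker L(k_*,0;0)$ by the defining property of the critical eigenvector $r$, hence in $\ran P$, so that $B(0,\kappa,\l,0)\Upsilon_\b = L(k_*,0;0)\Upsilon_\b - \l\Upsilon_\b = -\l\Upsilon_\b$. Since $-\l\Upsilon_\b$ already lies in $\ran P$, applying $P$ changes nothing, giving $PB(0,\kappa,\l,0)\Upsilon_\b = -\l\Upsilon_\b$; equivalently, the commutation relation $PB(0,\kappa,\l,0) = -\l P$ of Observation~\ref{obs:keycommutator} persists verbatim for the local-nonlinearity Bloch operator. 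This is the whole content of the lemma, and, as in Section~\ref{sec:gencase}, it is also what guarantees that the $\cV$-terms drop out of every order in the subsequent Taylor expansion.

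There is essentially no obstacle here; the only thing worth double-checking is that the modification of the Bloch operator introduced in Section~\ref{sec:gencase} (its incarnation in Lemma~\ref{lem:Example1DBloch}) does not perturb the zeroth-order term, which is immediate since the extra $i\s(\dots)$ correction vanishes at $\s = 0$ and, in the present reaction-diffusion setting, $d_\e(0,\kappa) = 0$ anyway. Thus the proof is a one-line invocation of Observation~\ref{obs:keycommutator}, accompanied by a sentence confirming that $P$ still denotes the same projection as in the earlier sections.
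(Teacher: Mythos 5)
Your proof is correct and follows essentially the same route as the paper, which simply invokes Observation \ref{obs:keycommutator} (the identity $PB(0,\kappa,\l,0)=-\l P$) exactly as in Lemma \ref{lem:GTaylorO1}. The extra details you supply — that $d_*=d_\e\equiv 0$ in the $O(2)$ reaction-diffusion setting and that $\Upsilon_\b\in\ker L(k_*,0;0)=\ran P$ — are the correct justification left implicit in the paper.
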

	\begin{proof}
		The proof is identical to the one in Lemma \eqref{lem:GTaylorO1}.
	\end{proof}

	\begin{lemma}\label{lem:Example1OE}
		$PB_\e(0,\kappa,\l,0)\Upsilon_\b+PB(0,\kappa,\l,0)\cV_\e(0,\kappa,\l,0,\b)$ vanishes identically.
	\end{lemma}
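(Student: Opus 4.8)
The plan is to follow the proof of Lemma~\ref{lem:GTaylorOE} (equivalently the $\mathcal O(\e)$ step \eqref{eq:PBTaylorOE}--\eqref{eq:PBTaylorOE3} of the coperiodic analysis), now feeding in the explicit first $\e$-derivative recorded in Lemma~\ref{lem:Example1DBloch}. First I would dispose of the second summand: since $P\cV\equiv0$ by construction, also $P\cV_\e(0,\kappa,\l,0,\b)=0$, so the key commutation relation $PB(0,\kappa,\l,0)=-\l P$ gives $PB(0,\kappa,\l,0)\cV_\e(0,\kappa,\l,0,\b)=-\l P\cV_\e(0,\kappa,\l,0,\b)=0$. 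It then remains only to show $PB_\e(0,\kappa,\l,0)\Upsilon_\b=0$.

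For this I would substitute $B_\e(0,\kappa,\l,0)v=2\kappa k_*D(0)\d_\xi^2 v+k_*^2\Upsilon_\a\circ\d_\xi^2 v+(\d_x^2\Upsilon_\a)\circ v+2k_*(\d_x\Upsilon_\a)\circ\d_\xi v$ from Lemma~\ref{lem:Example1DBloch}, with $v=\Upsilon_\b$. The three Hadamard-product terms are products of $\Upsilon_\a$ (or a $\xi$-derivative of it) with $\Upsilon_\b$ (or a $\xi$-derivative of it); since $\Upsilon_\a$ and $\Upsilon_\b$ are each Fourier supported in $\{\pm1\}$, every such product is Fourier supported in $\{0,\pm2\}\subset\ker P$ and drops out. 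One is left with $PB_\e(0,\kappa,\l,0)\Upsilon_\b=2\kappa k_*\,P\!\left(D(0)\d_\xi^2\Upsilon_\b\right)=-2\kappa k_*\,P\!\left(D(0)\Upsilon_\b\right)$, which is exactly $\kappa\,P\big(L_k(k_*,0)D_\xi\Upsilon_\b\big)$ — the linear piece of \eqref{eq:PBTaylorOE2} with the $d$-terms absent, a reaction--diffusion linear part forcing $d\equiv0$.

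Evaluating that projection by spectral perturbation theory exactly as in \eqref{eq:PBTaylorOE3} (using $S_k(k_*,0)=-2k_*D(0)$ and $\ell S_k(k_*,0)r=\tl_k(k_*,0)$) produces a nonzero scalar multiple of $\tl_k(k_*,0)\,\b e^{i\xi}r+c.c.$, so the whole lemma reduces to the assertion $\tl_k(k_*,0)=0$. The one point needing care is that Hypothesis~(H4) only asserts the one-sided statement $\Re\d_k\tl(k_*,0)=0$; but in the present reaction--diffusion setting $S_k(k_*,0)=-2k_*D(0)$ is a real matrix and $\ell,r$ are real vectors, so $\tl_k(k_*,0)=\ell S_k(k_*,0)r$ is real, and combined with $\Re\tl_k(k_*,0)=0$ this forces $\tl_k(k_*,0)=0$. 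Everything else is routine bookkeeping identical to Lemma~\ref{lem:GTaylorOE} and its coperiodic counterpart; the reality upgrade just described is the only substantive step.
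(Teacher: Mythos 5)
Your proposal is correct and follows essentially the same route as the paper: kill the $\cV_\e$ term via $P\cV_\e=0$ and the commutator observation, discard the Hadamard-product terms by their Fourier support in $\{0,\pm2\}\subset\ker P$, and reduce the surviving linear term $2\kappa k_*PD(0)\d_\xi^2\Upsilon_\b$ to a multiple of $\tl_k(k_*,0)$, which vanishes. Your reality argument ($\ell$, $r$, $D(0)$ real so $\tl_k(k_*,0)\in\RR$, then (H4) forces it to be zero) is just a slightly more explicit phrasing of the paper's appeal to $O(2)$ invariance making $\tl(k,\mu)$ real near $(k_*,0)$.
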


	\begin{proof}
		Expanding this using the appropriate derivative in Lemma \eqref{lem:Example1DBloch}, we get
		\begin{equation}
			P\left(2\kappa k_* D(0)\d_\xi^2\Upsilon_\b+k_*^2\Upsilon_\a\circ \d_\xi^2\Upsilon_\b+\d_x^2\Upsilon_\a\circ \Upsilon_\b+2k_* \d_x\Upsilon_\a\circ \d_\xi \Upsilon_\b \right)= 2\kappa k_* PD(0)\d_\xi^2\Upsilon_\b,
		\end{equation}
		because $\d_x^N\Upsilon_\a\circ \d_\xi^M\Upsilon_\b$ is Fourier supported in $\{0,\pm 2 \}$ for any $N,M\in\NN$ and hence in the kernel of $P$. 
		Since we have an $O(2)$ invariant linear operator, the dispersion relation $\tl(k,\mu)$ is real 
		in a neighborhood of $(k_*,0)$. This implies that $2k_*\ell D(0)r=\tl_k(k_*,0)=0$.
	\end{proof}

	\begin{lemma}\label{lem:Example1OS}
		$PB_\s(0,\kappa,\l,0)\Upsilon_\b+PB(0,\kappa,\l,0)\cV_\s(0,\kappa,\l,0,\b)$ also vanishes identically.
	\end{lemma}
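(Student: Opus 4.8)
The plan is to follow the pattern of Lemma~\ref{lem:GTaylorOS} from the general theory, simplified by the fact that for the reaction-diffusion system \eqref{eq:ModelLocalSystems} one has $d(\e,\kappa)\equiv 0$, so that the $\s$-dependence of the Bloch operator enters only through the combination $(\d_\xi+i\s)$.

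First I would dispose of the second summand. Differentiating the defining relation $P\cV\equiv 0$ in $\s$ gives $P\cV_\s(0,\kappa,\l,0,\b)=0$, so by the key commutation relation $PB(0,\kappa,\l,0)=-\l P$ we get $PB(0,\kappa,\l,0)\cV_\s(0,\kappa,\l,0,\b)=-\l P\cV_\s(0,\kappa,\l,0,\b)=0$. Thus it remains only to show $PB_\s(0,\kappa,\l,0)\Upsilon_\b=0$.

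Next I would substitute $B_\s(0,\kappa,\l,0)v=2ik_*^2 D(0)\d_\xi v$ from Lemma~\ref{lem:Example1DBloch}. Writing $\Upsilon_\b=\frac12(\b e^{i\xi}r+\bar\b e^{-i\xi}r)$ (with $r$ real in the reaction-diffusion case) and noting that $\d_\xi$ is a Fourier multiplier, $B_\s(0,\kappa,\l,0)\Upsilon_\b$ remains Fourier-supported on the modes $\pm1$, with $e^{i\xi}$-coefficient a fixed scalar multiple of $D(0)r$ (and its conjugate on the $e^{-i\xi}$ mode). Since $P$ acts on these modes by projecting onto $\Span\{r\}$ along the left eigenvector $\ell$, the $e^{i\xi}$-coefficient of $PB_\s(0,\kappa,\l,0)\Upsilon_\b$ is a scalar multiple of $\ell D(0)r$, and no other mode survives the projection.

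Finally, I would invoke the spectral identity already used in Lemma~\ref{lem:Example1OE}: from $S(k,0)=-k^2D(0)+A(0)$ one has $\tl_k(k_*,0)=\ell S_k(k_*,0)r=-2k_*\,\ell D(0)r$, while in the $O(2)$/reaction-diffusion setting $\tl(k,0)$ is real near $k_*$, so hypothesis (H4) forces $\tl_k(k_*,0)=0$ and hence $\ell D(0)r=0$. Therefore the $e^{i\xi}$-coefficient vanishes, and with it (together with its complex conjugate on the $e^{-i\xi}$ mode) all of $PB_\s(0,\kappa,\l,0)\Upsilon_\b$, which proves the lemma. I do not anticipate a real obstacle here: the computation is entirely parallel to Lemma~\ref{lem:GTaylorOS}, and the only point needing a moment's care is confirming that $\d_\xi$ creates no new Fourier modes, so that $P$ isolates exactly the resonant coefficient that then vanishes by $\tl_k(k_*,0)=0$.
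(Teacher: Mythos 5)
Your proof is correct and follows essentially the same route as the paper, which simply notes that $B_\s(0,\kappa,\l,0)=2ik_*^2D(0)\d_\xi$ and defers to the calculation of the previous lemma (the $\cV_\s$ term killed by $PB(0,\kappa,\l,0)=-\l P$ with $P\cV_\s=0$, and the remaining term reduced to a multiple of $\ell D(0)r=\tfrac{-1}{2k_*}\tl_k(k_*,0)=0$ by realness of the dispersion relation and (H4)). Your write-up just makes explicit the steps the paper leaves implicit.
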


	\begin{proof}
		This follows from $B_\s(0,\kappa,\l,0)=2ik_*^2D(0)\d_\xi$ and a similar calculation to the one provided in the previous lemma.
	\end{proof}

	\begin{lemma}\label{lem:Example1OEE}
		In reduced form, $PB_{\e\e}(0,\kappa,\l,0)\Upsilon_\b+2PB_\e(0,\kappa,\l,0)\cV_\e(0,\kappa,\l,0,\b)$ is given by the following expression
		$$
		2\a^2 \bp \Re\g & 0 \\ \Im\g & 0 \ep \bp \b_1\\ \b_2\ep +\bp \cO(\e,|\l|) & \cO(|\l|) \\ \cO(\e,|\l|) & \cO(|\l|)\ep \bp \b_1\\ \b_2\ep.
		$$
	\end{lemma}

	\begin{proof}
		First, we expand $PB_{\e\e}(0,\kappa,\l,0)\Upsilon_\b$ using the appropriate derivative in Lemma \eqref{lem:Example1DBloch}:
		\begin{equation}\label{eq:Ex1OEE1}
		\begin{split}
		PB_{\e\e}(0,\kappa,\l,0)\Upsilon_\b&=-2k_*^2PD_\mu(0)\Upsilon_\b+2PA_\mu(0)v\Upsilon_\b-Pk_*^2(\d_\e^2\tilde{u}_{0,\kappa})\circ\Upsilon_\b+\\
		&+P(\d_\e^2\d_x^2\tilde{u}_{0,\kappa})\circ \Upsilon_\b+2k_*P(\d_\e^2\d_x \tilde{u}_{0,\kappa})\circ \d_\xi\Upsilon_\b,
		\end{split}
		\end{equation}
		where we've used $\d_\xi^2\Upsilon_\b=-\Upsilon_\b$, $PD(0)\Upsilon_\b=0$, and $P\left(\d_x^N\Upsilon_\a\circ\d_\xi^M\Upsilon_\b \right)=0$ as in the calculation for the $\cO(\e)$ term. 
		From spectral perturbation theory, we know that $\ell (-k_*^2D_\mu(0)+A_\mu(0))r=\tl_\mu(k_*,0)$. 
		To continue with this term, we need to compute $\d_\e^2\tilde{u}_{0,\kappa}$. To do this, we write down the equation that $\tilde{u}_{\e,\kappa}$ solves and differentiate with respect to $\e$ twice, to obtain
		\begin{equation}
		 	\d_\e^2 k^2D(\e^2)\d_\xi^2\tilde{u}_{\e,\kappa}+A(\e^2)\tilde{u}_{\e,\kappa}+k^2\d_\xi^2(\tilde{u}_{\e,\kappa}\circ \tilde{u}_{\e,\kappa})=0.
		\end{equation}

		Evaluating the derivative and collecting terms, we get
		\ba
			k^2D(\e^2)&\d_\xi^2\d_\e^2  \tilde{u}_{\e,\kappa}+A(\e^2)\d_\e^2\tilde{u}_{\e,\kappa}+2k^2\d_\xi^2(\tilde{u}_{\e,\kappa}\circ \d_\e^2\tilde{u}_{\e,\kappa})+2k^2\d_\xi^2(\d_\e\tilde{u}_{\e,\kappa}\circ\d_\e\tilde{u}_{\e,\kappa})\\
			&+2\big[(2\kappa kD(\e^2)+2\e k^2D_(\mu))\d_\xi^2+2\e A_\mu(\e^2) \big]\d_\e\tilde{u}_{\e,\kappa}+8\kappa k\d_\xi^2(\tilde{u}_{\e,\kappa}\circ \d_\e\tilde{u}_{\e,\kappa})\\
			&+\big[(2\kappa^2 D(\e^2)+2\e\kappa kD_\mu(\e^2)+2k^2D_\mu(\e^2)+4\e^2k^2D_{\mu\mu}(\e^2))\d_\xi^2+2A_\mu(\e^2)\\
			& +4\e^2A_{\mu\mu}(\e^2) \big]\tilde{u}_{\e,\kappa}
			+2\kappa^2\d_\xi^2(\tilde{u}_{\e,\kappa}\circ\tilde{u}_{\e,\kappa})=0.
			\ea
		Taking $\e=0$, we notice that the surviving terms are given by
		\begin{equation}
			k_*^2D(0) \d_\xi^2 \d_\e^2 \tilde{u}_{0,\kappa}+A(0)\d_\e^2\tilde{u}_{0,\kappa}+2k_*^2\d_\xi^2(\Upsilon_\a\circ\Upsilon_\a)+4\kappa k_*D(0)\d_\xi^2\Upsilon_\a=0.
		\end{equation}

		For our purposes, we need modes $0$ and $2$ of $\d_\e^2\tilde{u}_{0,\kappa}$. It is easily verified from the above equation that they are given by
		\begin{align}
			\widehat{\d_\e^2\tilde{u}_{0,\kappa}}(0)&=0,\\
			\widehat{\d_\e^2\tilde{u}_{0,\kappa}}(2)&=2k_*^2\a^2\left[-4k_*^2D(0)+A(0)\right]^{-1}(r\circ r).
		\end{align}
		
		The other term is $-2PB_\e(0,\kappa,\l,0)(I-P)T_\l(I-P)B_\e(0,\kappa,\l,0)\Upsilon_\b$, which by Taylor's theorem is given by $-2PB_\e(0,\kappa,\l,0)(I-P)T_0(I-P)B_\e(0,\kappa,\l,0)\Upsilon_\b+\cO(|\l|)\Upsilon_\b$. Thinking of $B_\e(0,\kappa,\l,0)$ as $\d_\e L(k,\mu;s)|_{\e=0}+\d_\e D_u\cN(\tilde{u}_{\e,\kappa};k,\s)|_{\e=0}$, we find that 
		$$
		-2PB_\e(0,\kappa,\l,0)(I-P)T_0(I-P)B_\e(0,\kappa,\l,0)\Upsilon_\b
		$$
		has four terms: ``linear-linear'', ``nonlinear-linear'', ``linear-nonlinear'', and ``nonlinear-nonlinear'' where ``linear'' corresponds to taking $\d_\e L(k,\mu;\s)|_{\e=0}$, ``nonlinear'' corresponds to 
$$
\d_\e D_u\cN(\tilde{u}_{\e,\kappa};k,\s)|_{\e=0},
$$
and the order determines which copy of $B_\e$ the term came from. 
		First we have the ``linear-linear'' term given by
		\ba
			P\big[&2\kappa k_* D(0)\d_\xi^2(I-P)T_0(I-P)2\kappa k_* D(0)\d_\xi^2  \big]\Upsilon_\b\\
			&=
			\kappa^2\Pi\big(2k_*D(0)(I-\Pi)\big[(I-\Pi)(k_*^2D(0)+A(0))(I-\Pi) \big]^{-1}(I-\Pi)2k_*D(0) \big)\Pi\b e^{i\xi}r\\
			&\quad  +c.c.
			\ea
		In terms of the symbol, the above reduces to
		\ba
			P\big[2\kappa k_* D(0)\d_\xi^2(I-P)&T_0(I-P)2\kappa k_* D(0)\d_\xi^2  \big]\Upsilon_\b=\\
			&\frac{1}{2}\kappa^2\Pi\big(S_k(k_*,0)\big[(I-\Pi)S(k_*,0)(I-\Pi) \big]^{-1}S_k(k_*,0) \big)\Pi\b e^{i\xi}r+c.c.
			\ea
		From the identity \eqref{eq:spectralid}, we see that in our special case this reduces to $\tl_{kk}(k_*,0)\Upsilon_\b$. 
		
		For the ``nonlinear-linear'' term, we have
		\begin{equation}
			2\kappa k_* P\left[k_*^2\Upsilon_\a\circ \d_\xi^2+\d_x^2\Upsilon_\a\circ+2k_* \d_x\Upsilon_\a\circ \d_\xi\right](I-P)T_0(I-P)D(0)\d_\xi^2\Upsilon_\b,
		\end{equation}
		which is zero because $\d_\xi^j(I-P)T_0(I-P)D(0)\d_\xi^2$ is a Fourier multiplier operator and hence 
		$$
		(I-P)T_0(I-P)D(0)\d_\xi^2\Upsilon_\b
		$$
		has Fourier support $\{\pm1 \}$, so that
		$\Upsilon_\a\circ \d_\xi^j(I-P)T_0(I-P)D(0)\d_\xi^2\Upsilon_\b$ is Fourier supported in $\{0,\pm2\}$. 
		For similar reasons, the ``linear-nonlinear'' term also vanishes.
	\end{proof}

	\begin{lemma}\label{lem:Example1OES}
	In reduced form, $PB_{\e\s}(0,\kappa,\l,0)\Upsilon_\b+PB_\e(0,\kappa,\l,0)\cV_\s(0,\kappa,\l,0,\b)+PB_\s(0,\kappa,\l,0)\cV_\e(0,\kappa,\l,0,\b)$ is given by
	\begin{equation}
	-i[[i\kappa k_*\tl_{kk}(k_*,0)]]\bp \b_1 \\ \b_2\ep+\cO(|\l|).
	\end{equation}
	\end{lemma}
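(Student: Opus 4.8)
The plan is to follow the proof of Lemma~\ref{lem:GTaylorOES} essentially line by line, feeding in the explicit derivative formulas of Lemma~\ref{lem:Example1DBloch} together with the relation $S(k,\mu)=-k^2D(\mu)+A(\mu)$, so that $S_k(k_*,0)=-2k_*D(0)$ and $S_{kk}(k_*,0)=-2D(0)$; recall also that $O(2)$-invariance makes the dispersion relation real near $(k_*,0)$, whence $\tl_k(k_*,0)=\ell S_k(k_*,0)r=-2k_*\ell D(0)r=0$, i.e.\ $\ell D(0)r=0$, and $d\equiv0$. As always, the would-be $PB(0,\kappa,\l,0)\cV_{\e\s}$ contribution is killed by the commutation relation $PB(0,\kappa,\l,0)=-\l P$ of \eqref{obs:keycommutator}, so the three displayed quantities are precisely the coefficient of $\e\s$ in the Taylor expansion of $PB(\e,\kappa,\l,\s)(\Upsilon_\b+\cV)$.

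First I would dispatch $PB_{\e\s}(0,\kappa,\l,0)\Upsilon_\b$. By Lemma~\ref{lem:Example1DBloch}, $B_{\e\s}(0,\kappa,\l,0)v=4i\kappa k_*D(0)\d_\xi v+2ik_*^2\Upsilon_\a\circ\d_\xi v+2ik_*\d_x\Upsilon_\a\circ v$; the two Hadamard-product terms have Fourier support in $\{0,\pm2\}$ when applied to $\Upsilon_\b$ and are annihilated by $P$, while the first term yields $4i\kappa k_*(\ell D(0)r)\d_\xi\Upsilon_\b=0$ since $\ell D(0)r=0$. (Equivalently, exactly as in the passage leading to \eqref{eq:PBES2}, this term reduces to $\tfrac12\kappa k_*\,\ell S_{kk}(k_*,0)r\,\b e^{i\xi}r-c.c.$, which vanishes here because $S_{kk}(k_*,0)=-2D(0)$.) So the whole expression comes from the two first-$\cV$-derivative terms.

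Next I would compute $PB_\e(0,\kappa,\l,0)\cV_\s(0,\kappa,\l,0,\b)$ and $PB_\s(0,\kappa,\l,0)\cV_\e(0,\kappa,\l,0,\b)$, using $\cV_\e=-T_\l(I-P)B_\e\Upsilon_\b$, $\cV_\s=-T_\l(I-P)B_\s\Upsilon_\b$ from Proposition~\ref{prop:CVExist}, $B_\s(0,\kappa,\l,0)=2ik_*^2D(0)\d_\xi$, and $T_\l=N+\cO(|\l|)$ with $N=T_0$ as in \eqref{eq:Tlambda}. The key simplifications: in $PB_\e\cV_\s$ the Hadamard parts of $B_\e$ carry a mode-$\{\pm1\}$ input into $\{0,\pm2\}$, so only the linear part $2\kappa k_*D(0)\d_\xi^2$ of $B_\e$ survives the outer $P$; in $PB_\s\cV_\e$ the outer operator $2ik_*^2D(0)\d_\xi$ is a Fourier multiplier and $P$ sees only the mode-$\{\pm1\}$ part of $\cV_\e$, which is again generated solely by $2\kappa k_*D(0)\d_\xi^2$ acting on $\Upsilon_\b$. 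Using $\d_\xi^2=-1$ on modes $\pm1$ and $\ell D(0)r=0$ (so $(I-P)$ acts as the identity on $D(0)r$), the $e^{i\xi}r$-coefficient of each of these two terms works out to a common multiple of $\ell S_k(k_*,0)NS_k(k_*,0)r$. Summing the three pieces and invoking the spectral identity \eqref{eq:spectralid} in the form $\tl_{kk}(k_*,0)=\ell S_{kk}(k_*,0)r-2\ell S_k(k_*,0)NS_k(k_*,0)r$ collapses everything to a multiple of $\kappa k_*\tl_{kk}(k_*,0)(\b e^{i\xi}r-\bar\b e^{-i\xi}\bar r)$, and the rewriting $\tfrac12(z-\bar z)=-i\,\tfrac12(iz+\overline{iz})$ of \eqref{eq:OESRed2} recasts it as the asserted reduced form $-i[[i\kappa k_*\tl_{kk}(k_*,0)]]\bp\b_1\\\b_2\ep+\cO(|\l|)$.

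The step I expect to be the main obstacle is not conceptual but bookkeeping: keeping straight which Hadamard-product terms survive which projection ($P$ versus $(I-P)$), and pairing the $e^{i\xi}$ and $e^{-i\xi}$ modes so that the complex coefficient is read off as the correct real $2\times2$ matrix. The example has been arranged (via $O(2)$-invariance and the divergence-form nonlinearity $\tfrac12\d_x^2(u\circ u)$) precisely so that $\tl_k(k_*,0)=0$, $d\equiv0$, and $B_\s$ carries no nonlinear part, so no genuinely new difficulty arises relative to Lemma~\ref{lem:GTaylorOES}.
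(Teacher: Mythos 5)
Your proposal is correct and follows essentially the same route as the paper's proof: kill the $\cV_{\e\s}$ term via $PB(0,\kappa,\l,0)=-\l P$, show $PB_{\e\s}\Upsilon_\b$ vanishes by the Fourier-support argument plus $\ell D(0)r=0$, reduce each of $PB_\e\cV_\s$ and $PB_\s\cV_\e$ to a multiple of $\ell S_k(k_*,0)NS_k(k_*,0)r$ (noting only the linear parts survive the projections), and finish with the spectral identity \eqref{eq:spectralid} and the rewriting $\tfrac12(z-\bar z)=-i\tfrac12(iz+\overline{iz})$. The only cosmetic difference is that the paper phrases the final collapse via the symbol $-\tfrac12\sgn(\eta)\tl_{kk}(k_*,0)\Pi$ and the Hilbert transform, which is equivalent to your mode-pairing bookkeeping.
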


	\begin{proof}
		There are three terms in the original expression in the lemma, given by $PB_{\e\s}(0,\kappa,\l,0)\Upsilon_\b$, $PB_\e(0,\kappa,\l,0)\cV_\s(0,\kappa,\l,0,\b)$, and $PB_\s(0,\kappa,\l,0)\cV_\e(0,\kappa,\l,0,\b)$. 
		First, looking at $PB_{\e\s}(0,\kappa,\l,0)\Upsilon_\b$ and expanding we get
		\begin{equation}
			PB_{\e\s}(0,\kappa,\l,0)\Upsilon_\b=P[4i\kappa k_*D(0)\d_\xi \Upsilon_\b+2ik_*^2\Upsilon_\a\circ \d_\xi \Upsilon_\b+2ik_*\d_x\Upsilon_\a\circ \Upsilon_\b].
		\end{equation}
		Note that this whole expression vanishes via reasoning similar to that which
		showed that the $\e$ coefficient identically zero. 

		For $PB_\e(0,\kappa,\l,0)\cV_\s(0,\kappa,\l,0,\b)$, we Taylor expand $T_\l=T_0+\cO(|\l|)$. Expanding using the formulas for $B$ and $\cV$, we get
		
		\begin{equation}
		\begin{split}
			PB_\e(0,\kappa,\l,0)\cV_\s(0,\kappa,\l,0,\b)=-P\left[2\kappa k_* D(0)\d_\xi^2+k_*^2\Upsilon_\a\circ \d_\xi^2+\d_x^2\Upsilon_\a\circ+2k_*\d_x\Upsilon_\a\circ \d_\xi \right]\\
			\cdot (I-P)T_0(I-P)[2ik_*^2D(0)\d_\xi]\Upsilon_\b+\cO(|\l|)\Upsilon_\b.
		\end{split}
		\end{equation}
		Similarly as in the ``linear-nonlinear'' calculation in lemma \eqref{lem:Example1OEE}, we can reduce the above to
		\ba
		PB_\e(0,\kappa,\l,0)\cV_\s(0,\kappa,\l,0,\b)&=-\kappa k_*P(2k_* D(0)\d_\xi^2)(I-P)T_0(I-P)(2ik_*D(0)\d_\xi)\Upsilon_\b\\
		&\quad +\cO(|\l|)\Upsilon_\b.
		\ea

		Notice that $P(2k_* D(0)\d_\xi^2)(I-P)T_0(I-P)(2ik_*D(0)\d_\xi)P$ is a Fourier multiplier operator with symbol supported on $\eta=\pm 1$:
		\begin{equation*}
			\Pi(-2k_* D(0))(I-\Pi)\left[(I-\Pi)[-k_*^2(\eta)^2D(0)+A(0) ](I-\Pi) \right]^{-1}(I-\Pi)(-2k_* D(0)\eta)\Pi.
		\end{equation*}
		That is, in terms of $S(k,\mu)$ and its derivatives, it is given by
		\begin{equation}
			\sgn(\eta)\Pi S_k(k_*,0)(I-\Pi)[(I-\Pi)S(\eta k_*,0)(I-\Pi)]^{-1}(I-\Pi)S_k(k_*,0)\Pi=-\frac{1}{2}\sgn(\eta)\tl_{kk}(k_*,0)\Pi.
		\end{equation}
		Hence, the contribution of $PB_\e(0,\kappa,\l,0)\cV_\s(0,\kappa,\l,0,\b)$ is given by
		\begin{equation}\label{eq:Example1PBECVS}
			\frac{1}{2}\kappa k_*\tl_{kk}(k_*,0)H\Upsilon_\b,
		\end{equation}
		where $H$ is the Hilbert transform. Analogously, we find that $PB_\s(0,\kappa,\l,0)\cV_\e(0,\kappa,\l,0,\b)$ contributes \eqref{eq:Example1PBECVS} as well. Applying the identity $\frac{1}{2}(z-\bar{z})=-i\frac{1}{2}(iz+\bar{iz})$, we can write the $\cO(\e\s)$ coefficient as
		\begin{equation}
			(-i[[i\kappa k_*\tl_{kk}(k_*,0) ]]+\cO(|\l|))\Upsilon_\b.
		\end{equation}
	\end{proof}

	\begin{lemma}\label{lem:Example1OSS}
			In reduced form, $\frac{1}{2}PB_{\s\s}(0,\kappa,\l,0)\Upsilon_\b+PB_\s(0,\kappa,\l,0)\cV_\s(0,\kappa,\l,0,\b)$ is given by
			\begin{equation}
			[[\frac{1}{2}k_*^2\tl_{kk}(k_*,0)]]\bp \b_1 \\ \b_2 \ep+\cO(|\l|)\bp \b_1 \\ \b_2\ep.
			\end{equation}
	\end{lemma}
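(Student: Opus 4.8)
The plan is to run the same Lyapunov--Schmidt reduction as in the proof of Lemma \ref{lem:GTaylorOSS} (whose $O(2)$, non-convective specialization this is), now feeding in the Bloch-operator derivatives recorded in Lemma \ref{lem:Example1DBloch} for the model \eqref{eq:ModelLocalSystems}. First observe that the displayed expression is precisely the $\s^2$-coefficient in the Taylor expansion of $PB(\e,\kappa,\l,\s)(\Upsilon_\b+\cV(\e,\kappa,\l,\s,\b))=0$: the a priori third contribution $\frac{1}{2}PB(0,\kappa,\l,0)\cV_{\s\s}(0,\kappa,\l,0,\b)$ is annihilated by the key commutation relation $PB(0,\kappa,\l,0)=-\l P$ together with $P\cV\equiv0$, so it only feeds into the $\cO(|\l|)$ error. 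Hence it suffices to reduce $\frac{1}{2}PB_{\s\s}(0,\kappa,\l,0)\Upsilon_\b$ and $PB_\s(0,\kappa,\l,0)\cV_\s(0,\kappa,\l,0,\b)$ separately.

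For the first term I substitute $B_{\s\s}(0,\kappa,\l,0)v=-2k_*^2D(0)v$ from Lemma \ref{lem:Example1DBloch}. Since $D(0)$ is a matrix (hence Fourier-local) operator, $PB_{\s\s}(0,\kappa,\l,0)\Upsilon_\b$ is supported in modes $\pm1$, and after reading off its $e^{i\xi}$-component and pairing with the left eigenvector it is proportional to $\ell D(0)r$. By the $O(2)$-invariance of the reaction--diffusion linear part the critical dispersion relation $\tl(\,\cdot\,,0)$ is real near $k_*$, so $\tl_k(k_*,0)=\ell S_k(k_*,0)r=-2k_*\,\ell D(0)r$ vanishes, and with it this term (equivalently, $\ell S_{kk}(k_*,0)r=-2\,\ell D(0)r=0$).

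For the second term I use $\cV_\s(0,\kappa,\l,0,\b)=-T_\l(I-P)B_\s(0,\kappa,\l,0)\Upsilon_\b$ (the third of the identities recalled just before Lemma \ref{lem:Example1O1}) together with $B_\s(0,\kappa,\l,0)v=2ik_*^2D(0)\d_\xi v=k_*L_k(k_*,0;0)v$. The crucial simplification is that, because $\tilde u_{0,\kappa}=0$, this operator is a \emph{pure constant-coefficient Fourier multiplier} with no nonlinear part — so, in contrast to the $\e\e$-computation of Lemma \ref{lem:Example1OEE}, the whole expression is ``linear--linear'': there are no Hadamard cross-products of $\Upsilon$-type functions to discard. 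Noting that the mode-$1$ component of $(I-P)B_\s(0,\kappa,\l,0)\Upsilon_\b$ lies in $\ker\Pi$, where $N$ is defined, and Taylor-expanding $T_\l=T_0+\cO(|\l|)=N+\cO(|\l|)$ on the $\pm1$ modes, the $e^{i\xi}r$-component of $PB_\s(0,\kappa,\l,0)\cV_\s(0,\kappa,\l,0,\b)$ reduces to a scalar multiple of $\ell S_k(k_*,0)NS_k(k_*,0)r$. By the spectral identity \eqref{eq:spectralid} — which, because $\ell S_{kk}(k_*,0)r=0$, collapses as in the proof of Lemma \ref{lem:Example1OES} to $\Pi S_k(k_*,0)NS_k(k_*,0)\Pi=-\frac{1}{2}\tl_{kk}(k_*,0)\Pi$ — this produces the claimed matrix $[[\frac{1}{2}k_*^2\tl_{kk}(k_*,0)]]$ once the resulting complex coefficient $z\b$ is rewritten in the real $2\times2$ form $[[z]]\bp \b_1\\ \b_2\ep$; the leftover $\cO(|\l|)\bp \b_1\\ \b_2\ep$ absorbs the suppressed $\cV_{\s\s}$ contribution and the $T_\l=T_0+\cO(|\l|)$ error.

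There is no conceptual obstacle: the work is entirely the Fourier-support bookkeeping (which modes each operator produces, and that the relevant components sit in $\ker\Pi$ before $N$ acts) together with keeping track of the numerous factors of $k_*$, of $i$, and of $\frac{1}{2}$; the only non-mechanical input is recognizing, via \eqref{eq:spectralid}, that the surviving constant-coefficient operator is exactly $\frac{1}{2}k_*^2\tl_{kk}(k_*,0)$. The main thing to be careful about is therefore simply not to mis-scale one of these constants when passing to reduced form.
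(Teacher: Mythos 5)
Your proposal is correct and follows essentially the same route as the paper: the $\cV_{\s\s}$ contribution is killed by $PB(0,\kappa,\l,0)=-\l P$ with $P\cV=0$, the $\frac12 PB_{\s\s}\Upsilon_\b$ term vanishes because $\ell D(0)r=-\tl_k(k_*,0)/2k_*=0$ in the $O(2)$ case, and the surviving $PB_\s\cV_\s$ term reduces to $-k_*^2\Pi S_k(k_*,0)NS_k(k_*,0)\Pi=\frac12 k_*^2\tl_{kk}(k_*,0)\Pi$ via the spectral identity \eqref{eq:spectralid}. (Your only slight imprecision is that the $\cV_{\s\s}$ term vanishes identically rather than merely feeding into the $\cO(|\l|)$ error, which is harmless.)
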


	\begin{proof}
		There are two terms in this case, $\frac{1}{2}PB_{\s\s}(0,\kappa,\l,0)\Upsilon_\b$ and $PB_\s(0,\kappa,\l,0)\cV_\s(0,\kappa,\l,0,\b)$. For the first term, we use the formula for $B_{\s\s}$ to expand it into
		\begin{equation}
			\frac{1}{2}PB_{\s\s}(0,\kappa,\l,0)\Upsilon_\b=-2k_*^2PD(0)P\Upsilon_\b=0
		\end{equation}
		by expanding it into Fourier modes. 
		Writing out $PB_\s(0,\kappa,\l,0)\cV_\s(0,\kappa,\l,0,\b)$, we find that
		\begin{equation*}
			PB_\s(0,\kappa,\l,0)\cV_\s(0,\kappa,\l,0,\b)=-P(2ik_*^2D(0)\d_\xi)(I-P)T_\l (I-P)(2ik_*^2D(0)\d_\xi)\Upsilon_\b.
		\end{equation*}

		Taylor expanding with respect to $\l$, we get
		\begin{equation}
			PB_\s(0,\kappa,\l,0)\cV_\s(0,\kappa,\l,0,\b)=-P(2ik_*^2D(0)\d_\xi)(I-P)T_0 (I-P)(2ik_*^2D(0)\d_\xi)\Upsilon_\b+\cO(|\l|)\Upsilon_\b.
		\end{equation}
		Factoring out a $k_*^2$, we compute the symbol of the above operator to be
		\begin{equation}
			-k_*^2\Pi(2ik_*D(0)i\eta)[(I-\Pi)S(\eta k_*,0)(I-\Pi)]^{-1}(2ik_*D(0)i\eta)\Pi.
		\end{equation}
		Collecting the powers of $\eta$, and simplifying a bit, we get
		$$
			-\eta^2k_*^2\Pi S_k(k_*,0)[(I-\Pi)S(\eta k_*,0)(I-\Pi)]^{-1}S_k(k_*,0)\Pi.
			$$
		Since $\eta=\pm1$ and $S(k,\mu)$ is even with respect to $k$, an application of the spectral identity reduces the above, finally, to
		$
			\frac{1}{2}k_*^2\tl_{kk}(k_*,0)\Pi.
			$
	\end{proof}

	We now turn to the other local example given by \eqref{eq:model2}. However, we will take a slightly different approach than the first model. The first key observation is that Lemma \ref{lem:corefined} gives the reduced equation at $\s=0$, or equivalently, all the terms in the Taylor expansion of $PB(\Upsilon_a+\cV)=0$ that only have $\e$-derivatives. Hence, we only need to compute the terms in the Taylor expansion that have at least one $\s$-derivative on either $B$ or $\cV$. From the identity \eqref{eq:SKTKeyId} relating the Schwartz kernel $\cK(\xi,\nu;\tilde{u}_{\e,\kappa},k)$ to $D_U\cN(\tilde{u}_{\e,\kappa},k)$, we see that we can write the corresponding Bloch operator as
	\be
	B(\e,\kappa,\l,\s)=L(k,\mu;\s)+dk\d_\xi+iC(\e,\kappa)\s+e^{-i\s\xi}D_U\cN(\tilde{u}_{\e,\kappa},k)e^{i\s\xi}
	\ee
	\begin{remark}
		Since this example model has $\cN(U,k)=\frac{1}{2}\d_x\bp (U^1)^2\\ (U^2)^2\ep$ where $U=(U^1,U^2)$, $d$ and $C$ can be nonzero as the original system is not $O(2)$-invariant.
	\end{remark}
	For this specific model, we have that
	\ba
		e^{-i\s\xi}D_U(\tilde{u}_{\e,\kappa},k)e^{i\s\xi}V&=e^{-i\s\xi}\Big(k\bp \tilde{u}^1_{\e,\kappa} & 0\\ 0 & \tilde{u}^2_{\e,\kappa}\ep\d_\xi e^{i\s\xi}V+k\bp \d_\xi \tilde{u}^1_{\e,\kappa} & 0 \\ 0 & \d_\xi\tilde{u}^2_{\e,\kappa}\ep e^{i\s\xi}V\Big)=\\ 
		&\quad=k\bp \tilde{u}^1_{\e,\kappa} & 0\\ 0 & \tilde{u}^2_{\e,\kappa}\ep(\d_\xi+i\s)V+k\bp \d_\xi \tilde{u}^1_{\e,\kappa} & 0 \\ 0 & \d_\xi\tilde{u}^2_{\e,\kappa}\ep V
	\ea
	Where $\tilde{u}_{\e,\kappa}=(\tilde{u}^1_{\e,\kappa},\tilde{u}^2_{\e,\kappa})$. As the original nonlinearity $\cN(U,k)$ is given by $Q(U,U;k)$ for a bilinear and translation-invariant operator $Q$, we can write $Q$ as a bilinear Fourier multiplier \cite{Mu}. In particular, the multiplier can be computed using the convolution theorem as
	\be
		Q(U,V;k)=\sum_{\eta_1,\eta_2\in\ZZ}\frac{1}{2}ik(\eta_1+\eta_2)\bp \hat{U}^1(\eta_1)\hat{V}^1(\eta_2)\\\hat{U}^2(\eta_1)\hat{V}^2(\eta_2)\ep e^{i(\eta_1+\eta_2)\xi}
	\ee
	Where as before, $\hat{U}$ denote the Fourier transform of $U$ and $U^1,U^2$ denote the coordinates of $U$. Let $\hat{\cQ}$ denote the bilinear multiplier associated to $\cQ$, and define $\cQ(U,V,k,\s)$ to be
	\be
		\cQ(U,V,k,\s):=\sum_{\eta_1,\eta_2\in\ZZ}\hat{\cQ}(k\eta_1,k(\eta_2+\s))(\hat{U}(\eta_1),\hat{V}(\eta_2))e^{i(\eta_1+\eta_2)\xi}
	\ee
	A short computation involving the convolution theorem reveals that $e^{-i\s\xi}D_U(\tilde{u}_{\e,\kappa},k)e^{i\s\xi}V=2\cQ(\tilde{u}_{\e,\kappa},V,k,\s)$ for all $2\pi$-periodic $V$. This leads to the alternative description of the Bloch operator as
	\be
		B(\e,\kappa,\l,\s)=L(k,\mu;\s)+dk\d_\xi+iC(\e,\kappa)+2\cQ(\tilde{u}_{\e,\kappa},\cdot,k,\s)
	\ee
	Following the same procedure as the previous example, we need to simplify $PB_\s\Upsilon_\b$, $PB_{\s\s}\Upsilon_\b$, $PB_{\e\s}\Upsilon_\b$, $PB_\e\cV_\s$, $PB_\s\cV_\e$, and $PB_\s\cV_\s$. Of particular importance are the terms involving the nonlinearity.
	\begin{theorem}\label{theo:genreduced}
		The reduced equation matches the prediction of the complex Ginzburg-Landau model, in the sense that the reduced equation is given by \eqref{eq:GStableRedEqn}.
	\end{theorem}
	\begin{proof}
		For the terms only involving $\e$-derivatives, we appeal to Lemma \ref{lem:corefined}. Note that $B_\s(0,\kappa,\l,0)=k_*L_k(k_*,0;0)+iC(0,\kappa)$ and by design, we have that $PB_\s\Upsilon_\b=0$. We have that $B_{\s\s}(0,\kappa,\l,0)=k_*^2L_{kk}(k_*,0;0)$ and so a similar computation to the one in Lemma \ref{lem:GTaylorOSS} or \ref{lem:Example1OSS} gives the desired conclusion. We are now left with the $\cO(\e\s)$ term. As we are most interested in the terms involving $\cN$ and its derivatives, we will extract the relevant terms from $PB_{\e\s}\Upsilon_\b+PB_\s\cV_\e+PB_\e\cV_\s$ to get
		\be
			P\cQ_\s(\Upsilon_\a,\Upsilon_\b,k,0)-k_*PL_k(k_*,0)T_\l\cQ(\Upsilon_\a,\Upsilon_\b)-k_*P\cQ(\Upsilon_\a,T_\l L_k(k_*,0;0)\Upsilon_\b)
		\ee
		These terms are all zero because they are all Fourier supported in $\{0,\pm 2\}$ as $T_\l$, $L_k(k_*,0;0)$ are Fourier multiplier operators and $\cQ_\s(U,V,k,\s):=\frac{\d}{\d\s}\cQ(U,V,k,\s)$ is a bilinear Fourier multiplier. For the remaining terms, it is a similar computation to the one in Lemma \ref{lem:GTaylorOES} or \ref{lem:Example1OES} to show that they match the desired prediction.
	\end{proof}
	\subsection{General Quasilinear Nonlinearities}
	We now let $\cN(U,k,\mu)$ be a general quasilinear nonlinearity. To show that the reduced equation matches the prediction of complex Ginzburg-Landau, we follow the procedure outlined in the second local example. For the term in the Bloch operator coming from $D_U\cN(\tilde{u}_{\e,\kappa},k,\mu)$, we Taylor expand with respect to $U$ to get
	\ba
		e^{-i\s\xi}D_U\cN(\tilde{u}_{\e,\kappa},k,\mu)e^{i\s\xi}V&=e^{-i\s\xi}D_U^2\cN(0,k,\mu)(\tilde{u}_{\e,\kappa},e^{i\s\xi}V)+\\
		&\quad+\frac{1}{2}e^{-i\s\xi}D_U^3\cN(0,k,\mu)(\tilde{u}_{\e,\kappa},\tilde{u}_{\e,\kappa},e^{i\s\xi}V)+h.o.t.
	\ea
	The first main observation that we make is that the trilinear term doesn't contribute to the terms with $\s$-derivatives. This is because we need to take two $\e$-derivatives, one on each copy of $\tilde{u}_{\e,\kappa}$, and then taking a $\s$-derivative on top of that safely makes it an error term. Since we can handle the pure $\e$-derivatives using Lemma \ref{lem:corefined}, we can without essential loss of generality assume that $\cN(U,k,\mu)$ is a bilinear form. So from now on, assume that $N(U,k,\mu)=\frac{1}{2}D_U^2\cN(0,k,\mu)(U,U)$. Let $\cQ$ be the bilinear multiplier associated to $D_U^2\cN(0,k,\mu)$, that is
	\be
		D_U^2\cN(0,k,\mu)(U,V)(\xi)=\sum_{\eta_1,\eta_2\in\ZZ}\cQ(k\eta_1,k\eta_2,\mu)(\hat{U}(\eta_1),\hat{V}(\eta_2))e^{i(\eta_1+\eta_2)\xi}
	\ee
	For example $Q:\RR^n\times\RR^n\to\RR^n$ is a fixed bilinear form and $I,J\in\NN$, then the multiplier associated to $Q(\d_x^IU,\d_x^JV)$ is $\cQ(k\eta_1,k\eta_2)=(ik\eta_1)^I(ik\eta_2)^jQ$. As $D_U^2\cN(0,k,\mu)$ is a sum of forms of this type, we know that the multiplier $\cQ$ associated to $D_U^2\cN(0,k,\mu)$ is a smooth function $\cQ:\RR^2\to M_n(\CC)$. \\
	So the term coming from the nonlinearity can be computed as
	\be
		(e^{-i\s\xi}D_U\cN(\tilde{u}_{\e,\kappa},k,\mu)e^{i\s\xi})V(\xi)=\sum_{\eta_1,\eta_2\in\ZZ}\cQ(k\eta_1,k(\eta_2+\s),\mu)(\hat{U}(\eta_1),\hat{V}(\eta_2))e^{i(\eta_1+\eta_2)\xi}
	\ee
	As before, we define $D_U^2\cN(0,k,\mu,\s)$ to be the bilinear Fourier multiplier operator whose multiplier is $\cQ(k\eta_1,k(\eta_2+\s),\mu)$.
	\begin{theorem}
		The reduced equation matches the prediction of complex Ginzburg-Landau, in the sense that the reduced equation is given by \eqref{eq:GStableRedEqn}.
	\end{theorem}
	\begin{proof}
		It remains to compute the terms featuring a $\s$-derivative, which are $PB_\s\Upsilon_\b$, $PB_{\s\s}\Upsilon_\b$, $PB_{\e\s}\Upsilon_\b$, $PB_\s\cV_\s$, $PB_\e\cV_\s$, $PB_\s\cV_\e$. By design, we have that $PB_\s\Upsilon_\b=0$. For the term $PB_{\s\s}\Upsilon_\b+PB_\s\cV_\s$ we note that only derivatives of $L$ appear, and so the argument in Lemma \ref{lem:GTaylorOSS} and Lemma \ref{lem:Example1OSS} carry over. We are then left with $PB_{\e\s}\Upsilon_\b+PB_\s\cV_\e+PB_\e\cV_\s$ where the terms that explicitly depend on $D_U^2\cN(0,k,\mu)$ are given by
		\be
		P\d_\s D_U^2\cN(0,k,\mu,\s)(\Upsilon_\a,\Upsilon_\b,k,0)-k_*PL_k(k_*,0)T_\l\cQ(\Upsilon_\a,\Upsilon_\b)-k_*P\cQ(\Upsilon_\a,T_\l L_k(k_*,0;0)\Upsilon_\b)
		\ee
		But as in the model example, each of these terms are Fourier supported in $\{0,\pm 2\}$ and hence are automatically annihilated by $P$. This leaves the terms only depending on $L$, which can be handled by the same argument as in Lemma \ref{lem:GTaylorOES} or Lemma \ref{lem:Example1OES}.
	\end{proof}
	Once one has the reduced equation, the rest of the argument in Section \ref{sec:gencase} can be carried out giving the analog of Theorem \ref{thm:genstab}.
	\begin{theorem}\label{thm:genmatch}
		For any $\nu_0>0$, there exists an $\e_0>0$ so that if $\e<\e_0$ and $|\kappa|\leq \kappa_E$, $\tilde{u}_{\e,\kappa}$ is linearly stable if $\kappa^2\leq (1-\nu_0)\kappa_S^2$, and linearly unstable if $\kappa^2>(1+\nu_0)\kappa_S^2$, where $\kappa_E$ and $\kappa_S$ are as defined in Section \ref{sec:cglstability}. 
	\end{theorem}
	\begin{remark}\label{rem:nonlocal}
		For the nonlocal case, the first major obstacle is making sense of the Bloch operator. The main technical difficulty in this step is making sense of $e^{-i\s\xi}D_U\sN(\tilde{u}_{\e,\kappa},k,\mu)e^{i\s\xi}$. This is because a priori, $D_U\sN(U,k,\mu)$ can only act on exponentials whose frequencies are integers. To get around this, note that for all $(u,k)$ and all $q\in\NN$ we see that $(I_qu,\frac{k}{q})$ has the same image under the map $(u,k)\to u(kx)$. Hence, one has the following identity
		\begin{equation*}
			\sN(I_qu,\frac{k}{q},\mu)=I_{\frac{k}{q}}^{-1}\sN(I_{\frac{k}{q}}I_qu,\mu)=I_qI_k^{-1}\sN(I_ku,\mu)=I_q\sN(u,k,\mu)
		\end{equation*}
		By the chain rule, one then has
		\begin{equation*}
			I_q^{-1}D_U\sN(I_qu,\frac{k}{q},\mu)I_q=D_U\sN(u,k,\mu)
		\end{equation*}
		We can then define $D_U\sN(u,k,\mu)e^{i\frac{j}{q}\xi}$. for $j\in\ZZ$ to be
		\begin{equation*}
			D_U\sN(u,k,\mu)e^{i\frac{j}{q}\xi}:=I_q^{-1}D_U\sN(I_qu,\frac{k}{q},\mu)I_qe^{i\frac{j}{q}\xi}=I_q^{-1}D_U\sN(I_qu,\frac{k}{q},\mu)e^{ij\xi}
		\end{equation*}
		In the original lab frame coordinates, this identity comes from the observation that if $u$ is $\frac{2\pi}{k}$-periodic, then for any $q\in\NN$ $u$ is also $\frac{2\pi}{k/q}$-periodic as well and so by restricting $\sN$ to the subspace $H^s_{per}([0,\frac{2\pi}{k/q}];\RR^n)$ as opposed to $H^s_{per}([0,\frac{2\pi}{k}];\RR^n)$ allows us to extend $D_U\sN(u,\mu)$ from $\frac{2\pi}{k}$-periodic functions to $\frac{2\pi}{k/q}$-periodic functions. We will now check that $D_U\sN(u,k,\mu)e^{i\frac{j}{q}\xi}$ as defined above is well-defined. To do this, we start by assuming $\frac{j}{q}$ is such that $j,q$ are coprime, then any other $\frac{j'}{q'}=\frac{j}{q}$ is of the form $j'=nj$, $q'=nq$ for some $n\in\ZZ$. Then, we have that
		\begin{equation*}
			I_{q'}^{-1}D_U\sN(I_{q'}u,\frac{k}{q'},\mu)I_{q'}e^{i\frac{j'}{q'}\xi}=I_q^{-1}I_n^{-1}D_U\sN(I_nI_qU,\frac{k}{q'},\mu)I_nI_qe^{i\frac{j'}{q'}\xi}
		\end{equation*}
		By the chain rule computation above, we see that $I_n^{-1}D_U\sN(I_nI_qU,\frac{k}{q'},\mu)I_n=D_U\sN(I_qU,\frac{k}{q},\mu)$, and note that $I_qe^{i\frac{j'}{q'}\xi}=e^{i\frac{j'}{n}\xi}=e^{ij\xi}$, and so it is well-defined.\\
		Suppose that for some fixed $q$ that $\cK(\xi,\frac{j}{q};u,k,\mu):=e^{-i\frac{j}{q}\xi}D_U\sN(u,k,\mu)e^{i\frac{j}{q}\xi}$ is the corresponding Schwartz kernel. One important property that this object has in the local case is that $\cK$ is always $2\pi$-periodic in $\xi$ regardless of the frequency and our definition of $D_U\sN(u,k,\mu)e^{i\frac{j}{q}\xi}$ implies that $\cK$ can only be guaranteed to be $2\pi q$-periodic in the nonlocal case. The importance of this property is that ensures that the Bloch operator $B$ maps $2\pi$-periodic functions to $2\pi$-periodic functions for all $\s$. With this assumption in hand, and assuming that $\cK(\xi,\frac{j}{q};u,k,\mu)$ and the multiplier of $D_U^2\sN(0,k,\mu)$ admit smooth extensions to all frequencies, essentially the same argument as in the quasilinear case can be run to produce a linear stability result for these special nonlocal systems.
	\end{remark}
	
	%
	%



\end{document}